\numberwithin{equation}{section}
\newcommand{\mN}{\mathbb{N}}
\newcommand{\mR}{\mathbb{R}}
\newcommand{\vs}{\vct{s}}
\newcommand{\vx}{\vct{x}}
\newcommand{\vy}{\vct{y}}
\newcommand{\vct}[1]{\mathbf{#1}}
\newcommand{\supp}{\mathrm{supp}\,}
\newcounter{cntD}
\newtheorem{assmD}[cntD]{Assumption}
\newtheorem{thm}{Theorem}[section]
\newtheorem{lem}[thm]{Lemma}
\newtheorem{rem}[thm]{Remark}
\newtheorem{prop}[thm]{Proposition}
\newtheorem{assm}[thm]{Assumption}
\definecolor{ilariablue}{rgb}{0 0.4 0.85}
\definecolor{sjoerdgreen}{rgb}{0 0.7 0.2}
\title[LAP for variable-coefficient wave equation ]{On the limiting amplitude principle for the wave equation with variable coefficients} \author{Anton~Arnold$^{1}$, Sjoerd~Geevers$^{2}$, Ilaria~Perugia$^{2}$, Dmitry~Ponomarev$^{1,3,4}$}
\address{{\small
$^1$ Institute of Analysis and Scientific Computing, Vienna
University of Technology\\
Wiedner Hauptstrasse 8-10, 1040 Vienna, Austria\\
$^2$ Faculty of Mathematics, University of Vienna\\
Oskar-Morgenstern-Platz 1, 1090 Vienna, Austria\\
$^3$ Factas research team, Centre Inria d'Universit{\'e} C{\^o}te d'Azur\\
Route des Lucioles 2004, BP 93, 06902 Sophia Antipolis, France\\
$^4$ St. Petersburg Department of V. A. Steklov Mathematical Institute, RAS,\\
Fontanka 27, 191023 St. Petersburg, Russia
}}
\begin{document}


\begin{abstract}
In this paper, we prove new results on the validity of the
  limiting amplitude principle (LAP) for the wave equation with nonconstant
  coefficients, not necessarily in divergence form. Under suitable assumptions on the coefficients and on
  the source term, we establish the LAP for space dimensions 2 and 3.
  This result is extended to one space dimension with an
  appropriate modification.  We also quantify the LAP and thus provide estimates for the convergence of the time-domain solution to the frequency-domain solution. 
Our proofs are based on time-decay results of solutions of some
auxiliary problems. 
\end{abstract}

\maketitle

\medskip

{\footnotesize
\noindent
{\bf Keywords} {wave equation, variable coefficients, limiting amplitude principle, long-time asymptotics}\\[0.1cm]
\noindent
{\bf Mathematics Subject Classification } {35L05, 35L10, 35B10, 35B40}}
\medskip

\section{Introduction}
\label{sec:intro}

An essential ingredient in connecting time- and frequency-domain wave problems is the limiting amplitude principle (LAP). Originally proposed as one of the tools to select the unique solution of the Helmholtz equation problem in an infinite domain, it has been studied in numerous works over the last 70 years.

The LAP can be crudely stated as follows: The solution to the time-dependent wave equation with time-harmonic source term converges, for large times, to the solution of the Helmholtz equation with the spatial source term and frequency corresponding to the original time-harmonic source.   
Our main motivation for revisiting the LAP comes from numerical analysis.
Helmholtz problems
can be challenging to solve in practice for large wavenumbers. Numerical methods have 
been proposed to address a classical Helmholtz problem efficiently through its reformulations in the time domain. They include the controllability
method introduced in~\cite{bristeau1998, glowinski2006}, together with its spectral version~\cite{heikkola2007a} and
its extensions~\cite{grote2019,grote2020}, the WaveHoltz
method~\cite{appelo2020}, the time-domain preconditioner 
of~\cite{stolk2020}, and the front-tracking adaptive method of~\cite{paper-AFEM}.
{Numerical results presented in the latter paper, for instance, have shown that solving the Helmholtz equation in the time domain can be advantageous for high frequencies, when computations are essentially reduced to the neighborhood of a lower-dimensional manifold (wave-front area).}

The analysis of these methods requires a quantification of the modeling error (reformulation of the frequency-domain problem into a time-domain problem), which will add to the error due to the numerical approximation of the problem in the time domain.
This motivates the study of the LAP under some new angles, with
particular focus on the quantification of large-time convergence.

{As opposed to a direct study of the resolvent operator, our
  analysis is based on decay estimates for the solutions of some
  auxiliary PDE problems. Since decay results are still the subject of
intense investigation, an advantage of this approach is that new findings in that area directly translate into improvements in the quantification of the large-time convergence in the LAP.}\medskip

{\bf Main results.} We consider the
following setup. Given an angular frequency $\omega>0$, material
parameters $\alpha$, $\beta$, which smoothly vary within some bounded
domain, and a compactly supported source term $F$, we consider the
following frequency-domain and time-domain problems, respectively:
\begin{equation}
\begin{cases}
-\omega^{2}U\left(\mathbf{x}\right)-\beta^{-1}\left(\mathbf{x}\right)\nabla\cdot\left(\alpha\left(\mathbf{x}\right)\nabla U\left(\mathbf{x}\right)\right)=F\left(\mathbf{x}\right), & \mathbf{x}\in\mathbb{R}^{d},\\
\underset{\left|\mathbf{x}\right|\rightarrow\infty}{\lim}\left|\mathbf{x}\right|^\frac{d-1}{2}\left[\partial_{\left|\mathbf{x}\right|}U\left(\mathbf{x}\right)-i\omega\sqrt{\beta_{0}/\alpha_{0}}U\left(\mathbf{x}\right)\right]=0,
\end{cases}\label{eq:U_Helm}
\end{equation}
and 
\begin{equation}
\begin{cases}
\partial_{t}^{2}u\left(\mathbf{x},t\right)-\beta^{-1}\left(\mathbf{x}\right)\nabla\cdot\left(\alpha\left(\mathbf{x}\right)\nabla u\left(\mathbf{x},t\right)\right)=e^{-i\omega t}F\left(\mathbf{x}\right), & \mathbf{x}\in\mathbb{R}^{d},\:t>0,\\
u\left(\mathbf{x},0\right)=0,\hspace{1em}\partial_{t}u\left(\mathbf{x},0\right)=0, & \mathbf{x}\in\mathbb{R}^{d}.
\end{cases}\label{eq:wave_LAP}
\end{equation}
Our assumptions on $\alpha$, $\beta$, and $F$ are stated as follows.

\begin{assm}\label{assm:alph_bet_base}
{\bf (smoothness, compactly supported derivatives \& positivity of coefficients)}
Assume {$d\geq 2$, and let} $\alpha$, $\beta\in C^\infty\left(\mR^d\right)$ 
be real-valued functions such that $\alpha(\vx)\geq\alpha_{\min}$, $\beta(\vx)\geq\beta_{\min}$ for $\vx\in\mR^d$,  and $\alpha(\vx)\equiv\alpha_0$, $\beta(\vx)\equiv\beta_0$
for $\vx\in\mR^d\backslash\Omega_{in}$, with some bounded domain $\Omega_{in}\subset\mR^d$ and constants $\alpha_{\min}$, $\beta_{\min}$, $\alpha_0$, $\beta_0>0$.
\end{assm}
\begin{assmD}\label{assm:alph_bet_base_1D}
{\bf (regularity, compactly supported derivatives \& positivity of coefficients; $1D$ case)}
Assume $d=1$, and let $\alpha$, $\beta\in W^{1,\infty}\left(\mR\right)$
be real-valued functions such that $\alpha(x)\geq\alpha_{\min}$, $\beta(x)\geq\beta_{\min}$ for $x\in\mR$,  and $\alpha(x)\equiv\alpha_0$, $\beta(x)\equiv\beta_0$
for $x\in\mR\backslash\Omega_{in}$, with some  
open bounded interval 
$\Omega_{in}\subset\mR$ and constants $\alpha_{\min}$, $\beta_{\min}$, $\alpha_0$, $\beta_0>0$.
\end{assmD}

\begin{assm}\label{assm:alph_bet_trap}
{\bf (nontrapping coefficients)}
Let $\alpha$, $\beta$ be non-trapping, i.e. such that all rays
associated with the metric $\alpha/\beta$ escape to infinity
\cite[Sect. 1]{Boucl-Burq}. In other words (see
  e.g. \cite[Def. 7.6 \& Cor. 7.10]{Grah-Pemb-Spen}), 
  {defining $H\left(\mathbf{q},\mathbf{p}\right):=\alpha\left(\mathbf{q}\right)\left|\mathbf{p}\right|^2-\beta\left(\mathbf{q}\right)$,
given
$\mathbf{q}_0$, $\mathbf{p_0}\in\mathbb{R}^d$ such that $H\left(\mathbf{q_0},\mathbf{p_0}\right)=0$,
the solution vector of the canonical system of differential equations with the Hamiltonian $H\left(\mathbf{q},\mathbf{p}\right)$, 
\begin{equation}\label{eq:rays}
\begin{cases}
\frac{d}{d t}\mathbf{q}\left(t\right)=2\alpha\left(\mathbf{q}\right)\mathbf{p}\left(t\right),&\hspace{1em}t>0,\\
\frac{d}{d t}\mathbf{p}\left(t\right)=-\frac{\beta\left(\mathbf{q}\right)}{\alpha\left(\mathbf{q}\right)}\nabla_\mathbf{q}\alpha\left(\mathbf{q}\right)+\nabla_\mathbf{q}\beta\left(\mathbf{q}\right),&\hspace{1em}t>0,\\
\mathbf{q}\left(0\right)=\mathbf{q}_0,\hspace{1em}\mathbf{p}\left(0\right)=\mathbf{p}_0,
\end{cases}
\end{equation}
must satisfy $\left|\mathbf{q}\left(t\right)\right|\rightarrow\infty$ as $t\rightarrow\infty$.}
\end{assm}

\begin{assm}\label{assm:F_base}
{\bf (compactly {supported} source)}
Let the complex-valued function $F\in L^2\left(\mathbb{R}^d\right)$ be such that $\supp F \subset \Omega_{in}$, with $\Omega_{in}$ as in Assumption \ref{assm:alph_bet_base} or \ref{assm:alph_bet_base_1D}.
\end{assm}

Under the above assumptions, 
we prove the following versions of the LAP.
\begin{thm}
\label{thm:LAP}
Let {$d=2, 3$}.
Suppose that Assumptions \ref{assm:alph_bet_base}--\ref{assm:F_base} are satisfied. Let
$U\left(\mathbf{x}\right)$ and $u\left(\mathbf{x},t\right)$ be 
solutions to \eqref{eq:U_Helm} and \eqref{eq:wave_LAP},
respectively. Then, there exists a constant $C>0$ 
depending on
$F$, $\alpha$, $\beta$, $\omega$, and $\Omega$
such that\\
for $d=2$:
\begin{equation*}
\left\Vert u\left(\cdot,t\right)-e^{-i\omega t}U\right\Vert_{H^1\left(\Omega\right)}
+\left\Vert \partial_t u\left(\cdot,t\right)+i\omega e^{-i\omega t}U\right\Vert_{L^2\left(\Omega\right)}\leq C\,\frac{1+\log\left(1+t^2\right)}{\left(1+t^2\right)^{1/2}},\quad t\geq 0;
\end{equation*}
for $d=3$:
\begin{equation*}
\left\Vert u\left(\cdot,t\right)-e^{-i\omega t}U\right\Vert_{H^1\left(\Omega\right)}
+\left\Vert \partial_t u\left(\cdot,t\right)+i\omega e^{-i\omega t}U\right\Vert_{L^2\left(\Omega\right)}\leq \frac{C}{\left(1+t^2\right)^{1/2}},\quad t\geq 0,
\end{equation*}
where $\Omega\subset\mR^d$ is an arbitrary bounded domain.
\end{thm}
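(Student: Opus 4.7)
My plan is to reduce the LAP to local-in-space decay estimates for an auxiliary homogeneous wave problem with compactly supported data. Set $w(\vx, t) := u(\vx, t) - e^{-i\omega t}U(\vx)$. Using \eqref{eq:U_Helm} and \eqref{eq:wave_LAP}, a direct computation shows that $w$ solves the homogeneous wave equation $\partial_t^2 w - \beta^{-1}\nabla\cdot(\alpha\nabla w) = 0$ with non-zero initial data $w(\cdot, 0) = -U$, $\partial_t w(\cdot, 0) = i\omega U$. The obstacle is that the outgoing Helmholtz solution $U$ decays only as $|\vx|^{-(d-1)/2}$ at infinity and is therefore not in $L^2(\mR^d)$ for $d = 2, 3$, so standard energy/decay arguments cannot be applied to $w$ directly.

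To bypass this, I would apply Duhamel's formula to $u$, which has zero initial data and time-harmonic forcing $e^{-i\omega t}F$:
\[
u(\cdot, t) = e^{-i\omega t}\int_0^t e^{i\omega s}\,v(\cdot, s)\,ds,
\]
where $v(\cdot, s)$ solves the homogeneous wave equation with initial data $(0, F)$. By Assumption~\ref{assm:F_base}, this auxiliary problem has compactly supported, finite-energy initial data. The outgoing Helmholtz solution can be recovered as the regularized integral $U = \lim_{\epsilon\to 0^+}\int_0^\infty e^{(i\omega-\epsilon)s}\,v(\cdot, s)\,ds$, which, upon Laplace transforming the auxiliary wave equation, coincides with the outgoing resolvent of $-\beta^{-1}\nabla\cdot(\alpha\nabla)$ at spectral parameter $\omega^2$ applied to $F$. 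Subtracting yields the central identity
\[
w(\cdot, t) = -e^{-i\omega t}\int_t^\infty e^{i\omega s}\,v(\cdot, s)\,ds,
\]
trading the non-integrable initial data of $w$ for a temporal tail of a benign, compactly supported auxiliary problem.

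The next step is to establish, on any bounded domain $\Omega' \supset \Omega \cup \Omega_{in}$, local decay estimates of the form $\|v(\cdot, s)\|_{H^1(\Omega')} + \|\partial_s v(\cdot, s)\|_{L^2(\Omega')} \lesssim \psi_d(s)$, where the rate $\psi_d(s)\to 0$ is dictated by the nontrapping ray condition \eqref{eq:rays}. In $d = 3$, $\psi_3$ is integrable in $s$ and a direct estimation of the tail yields $\|w(\cdot, t)\|_{H^1(\Omega)} \leq \int_t^\infty \psi_3(s)\,ds \lesssim (1 + t^2)^{-1/2}$. In $d = 2$, the expected $\psi_2(s)$ is borderline non-integrable, so I would exploit the oscillation of $e^{i\omega s}$ via an integration by parts in $s$: this produces a boundary term of size $\psi_2(t)/\omega$ plus a remainder integral controlled by $\partial_s v$, which is responsible for the extra $\log(1+t^2)$ factor in the 2D rate. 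The companion bound for $\partial_t u + i\omega e^{-i\omega t}U = \partial_t w$ follows by differentiating the tail identity in $t$; the arising boundary contribution is controlled by $\|v(\cdot, t)\|_{L^2(\Omega)} \lesssim \psi_d(t)$, and no further integration by parts is needed.

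The main obstacle is the derivation of the local decay rates $\psi_d$ for the auxiliary wave problem under only the qualitative nontrapping assumption \eqref{eq:rays}, together with the quantitative dependence on $\alpha$, $\beta$, $\omega$, and $\Omega_{in}$; these are the "auxiliary problems" alluded to in the abstract. A secondary technical point is the rigorous justification of the identification of $U$ with the regularized tail integral: this is naturally handled by first introducing a small dissipation $\epsilon > 0$, verifying the identity in that absolutely convergent setting, and then letting $\epsilon \to 0^+$ while ensuring that the outgoing Sommerfeld condition on $U$ is preserved in the limit.
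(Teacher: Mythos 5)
Your reduction is a genuinely different route from the paper's: instead of decomposing $W=u-e^{-i\omega t}U$ by a spatial cutoff and the far-field asymptotics of $U$ (the paper's $W_1+\dots+W_4$), you represent $W$ as the temporal tail $-e^{-i\omega t}\int_t^\infty e^{i\omega s}v(\cdot,s)\,ds$ of the Duhamel integral, with $v$ the free evolution of the data $(0,F)$. For $d=3$ this is clean and arguably simpler: the weighted estimate \eqref{eq:wave_inv_estim1} gives $\left\Vert v(\cdot,s)\right\Vert_{H^1(\Omega)}\lesssim (1+s^2)^{-1}$ using only that $F\in L^2$ is compactly supported, the tail is absolutely convergent, and the $t^{-1}$ rate follows --- modulo the identification of $U$ with the regularized tail integral. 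That identification is precisely the limiting absorption principle for the outgoing resolvent of $-\beta^{-1}\nabla\cdot(\alpha\nabla\,)$, i.e.\ the resolvent-theoretic step the paper deliberately avoids; you should not defer it as a secondary point, since for $d=2$ the integral $\int_0^\infty e^{i\omega s}v(\cdot,s)\,ds$ is not even absolutely convergent in $H^1(\Omega)$, so its convergence to $U$ with the correct radiation condition must be proved, not assumed.

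The genuine gap is in the case $d=2$. The best local decay available under the nontrapping hypothesis is $\left\Vert v(\cdot,s)\right\Vert_{H^1(\Omega)}\lesssim s^{-1}$, so the tail diverges and you must integrate by parts. The boundary term is fine ($\lesssim t^{-1}$), but the remainder requires $\int_t^\infty\left\Vert\partial_s v(\cdot,s)\right\Vert_{H^1(\Omega)}\,ds<\infty$. The improved rate $s^{-d}=s^{-2}$ from the cosine estimate \eqref{eq:wave_inv_estim2} is only an $L^2\to L^2$ bound: it controls $\left\Vert\partial_s v\right\Vert_{L^2(\Omega)}$ but not $\left\Vert\nabla\partial_s v\right\Vert_{L^2(\Omega)}$, and upgrading it to $H^1$ costs a derivative on the data, i.e.\ requires $F\in H^2$, which Assumption~\ref{assm:F_base} does not provide. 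Viewing $\partial_s v$ as the free solution with data $(F,0)$ and reapplying \eqref{eq:wave_inv_estim1} only returns the non-integrable rate $s^{-1}$ in $H^1$, and further integrations by parts reproduce the same rate (each $\partial_s^{2k}v=(-P)^k v$ is again a sine-propagated solution), so the recursion never closes. In particular, nothing in your argument actually produces the $\log(1+t^2)$ factor: in the paper it arises from the finite-interval convolution $\int_0^t(1+(t-\tau)^2)^{-1/2}(1+\tau^2)^{-1/2}\,d\tau$ in Proposition~\ref{prop:dec_source} with $p=1$, which is structurally different from your infinite tail. The paper circumvents exactly this obstruction by peeling off the slowly decaying part of the initial data with explicit constant-coefficient computations that exploit the oscillatory radial profile $e^{i\omega\rho/c_0}/\rho^{(d-1)/2}$ (Lemmas~\ref{lem:gen_dec} and~\ref{lem:spec_dec}), reserving the Duhamel/source estimate only for a forcing term that already decays like $t^{-1}$ in $H^1$.
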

 
\begin{thm}\label{thm:LAP_1D}
Let $d=1$. Suppose that Assumptions {\ref{assm:alph_bet_base_1D}} and
\ref{assm:F_base} are satisfied. Let $U\left(\mathbf{x}\right)$
and $u\left(\mathbf{x},t\right)$ be
solutions to \eqref{eq:U_Helm} and \eqref{eq:wave_LAP}, respectively. Then,
there exist constants $\Lambda>0$ {(depending on
 $\alpha$, $\beta$)} and $C>0$ (depending on
$F$, $\alpha$, $\beta$, $\omega$, and $\Omega$) such that
\begin{equation*}
\left\Vert u\left(\cdot,t\right)-e^{-i\omega t}U-{U_\infty}\right\Vert_{H^1\left(\Omega\right)}+\left\Vert \partial_t u\left(\cdot,t\right)+i\omega e^{-i\omega t}U\right\Vert_{L^2\left(\Omega\right)}\leq C e^{-\Lambda t},\hspace{1em}t\geq0,
\end{equation*}
where
	{
	\begin{equation}\label{eq:U_infty_expl}
	U_\infty:=\frac{1}{2i\omega\sqrt{\alpha_0\beta_0}}\int_{\Omega_{in}}F\left(x\right) \beta\left(x\right)dx,
	\end{equation}
and $\Omega\subset\mR^d$ is an arbitrary bounded domain.
}
\end{thm}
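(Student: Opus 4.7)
The plan is to reduce the theorem to an exponential time-decay estimate for a homogeneous wave problem, in keeping with the paper's auxiliary-problem strategy. Define $\psi(x,t) := e^{-i\omega t}U(x) + U_\infty$. Since the spatial operator $L u := \beta^{-1}\partial_x(\alpha\partial_x u)$ annihilates constants and $U$ solves \eqref{eq:U_Helm}, the function $\psi$ is itself a particular solution of the inhomogeneous wave equation \eqref{eq:wave_LAP}; hence $v := u - \psi$ satisfies
\begin{equation*}
\partial_t^2 v - L v = 0 \text{ in }\mR\times(0,\infty), \quad v|_{t=0} = -U - U_\infty, \quad \partial_t v|_{t=0} = i\omega U,
\end{equation*}
and the theorem's estimate becomes $\|v(\cdot, t)\|_{H^1(\Omega)} + \|\partial_t v(\cdot, t)\|_{L^2(\Omega)} \leq C e^{-\Lambda t}$.

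The principal difficulty is that the initial data for $v$ is not compactly supported: by the Sommerfeld condition, $U(x) \sim A_\pm e^{\pm i k_0 x}$ with $k_0 := \omega\sqrt{\beta_0/\alpha_0}$ as $x\to\pm\infty$, and $U_\infty$ is a nonzero constant. Introducing a smooth cutoff $\chi$ equal to $1$ on a neighborhood of $\overline{\Omega_{in}\cup\Omega}$, I would decompose $v = v_c + v_f$ into a compact-data part $v_c$ (data $= \chi\cdot$ original) and a far-field part $v_f$ (data $= (1-\chi)\cdot$ original), the latter supported where the coefficients are constant. On each half-line $|x| > R_0$ the coefficients are constant and $v_f$ admits a d'Alembert representation: the data splits there into an outgoing plane-wave mode (value $-A_\pm e^{\pm i k_0 x}$ matched with velocity $i\omega A_\pm e^{\pm i k_0 x}$, producing the purely outgoing travelling wave $-A_\pm e^{\pm i(k_0 x - \omega t)}$), a constant component $-U_\infty$, and a localized correction in $\supp\nabla\chi$. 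Combining the outgoing travelling waves with the constant solution and tracking their scattering through $\Omega_{in}$, the choice \eqref{eq:U_infty_expl} is dictated exactly so that the long-time contribution of $v_f$ inside $\Omega$ matches $-e^{-i\omega t}U - U_\infty$ up to an exponentially small error, leaving only a compactly supported residual to control.

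The exponential decay for this compactly supported residual then follows from 1D scattering theory. In 1D, Assumption \ref{assm:alph_bet_trap} is automatic: any solution of \eqref{eq:rays} with $H=0$ satisfies $|\mathbf{p}|^2 = \beta/\alpha$ and $\mathbf{p}$ retains its sign, so $\mathbf{q}$ escapes monotonically to $\pm\infty$. Correspondingly, the cut-off resolvent $\chi(z^2 - L)^{-1}\chi$ extends meromorphically through the imaginary axis with at most a simple pole at $z = 0$ and all other poles in some half-plane $\{\mathrm{Im}(z)\leq -\Lambda\}$, which yields exponential local energy decay at rate $\Lambda > 0$. The main obstacle will be the matching argument of the previous paragraph: rigorously identifying \eqref{eq:U_infty_expl}—which reflects the weighted $L^2(\mR; \beta\,dx)$ structure natural to $L$—as the precise zero-mode projection of the driving data required to cancel the non-compactly-supported far-field contribution, thereby reducing the LAP to a purely compactly supported decay problem.
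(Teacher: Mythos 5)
Your proposal is correct in its essential structure and follows the same route as the paper's proof: both hinge on the two observations that (i) in $1D$ the radiation condition is exact, $c_0\partial_{|x|}U=i\omega U$ outside $\Omega_{in}$, so the far-field part of the data can be arranged to be \emph{exactly} outgoing and hence never re-enters $\Omega$, and (ii) the remaining compactly supported piece decays exponentially to a constant rather than to zero, which is the source of $U_\infty$. The differences are worth recording. First, you subtract $U_\infty$ from the solution at the outset (via $\psi=e^{-i\omega t}U+U_\infty$), so the constant reappears as a non-decaying, non-compactly-supported component $-(1-\chi)U_\infty$ of the far-field data; you then have to verify that the limits of \emph{three} pieces (the compact-data part, the localized cutoff correction, and the propagated constant) sum to zero. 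The paper instead keeps $W=u-e^{-i\omega t}U$ and splits it into exactly one outgoing piece \eqref{eq:W0_tild} (which vanishes identically on $\Omega$ for $t>0$) and one compact-data piece \eqref{eq:W1_tild}, whose limit constant is computed in a single integration by parts using the exact radiation condition and the Helmholtz equation; this is \eqref{eq:U_infty_estim}, and it is precisely the ``matching argument'' you defer. Your intuition that $U_\infty$ is the zero-mode projection in $L^2(\mathbb{R};\beta\,dx)$ is exactly right and is what the formula in Proposition~\ref{prop:1D_decay} encodes. Second, for the exponential decay of the compact-data residual you invoke meromorphic continuation of the cut-off resolvent and a resonance-free strip, whereas the paper deliberately avoids resolvent analysis and cites the explicit energy-based decay theorem of \cite[Thm~1.4]{paper-decay1D} (Proposition~\ref{prop:1D_decay}). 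Your route is classical in $1D$, but note that Assumption~\ref{assm:alph_bet_base_1D} only gives $\alpha,\beta\in W^{1,\infty}$, so the existence of a resonance-free strip for $\beta^{-1}\partial_x(\alpha\partial_x\cdot)$ at this regularity is a nontrivial input that you assert rather than establish; the cited decay theorem both supplies this and delivers the constant $u_\infty$ in the weighted form you would need for the matching step.
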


\begin{rem}
Note that, in contrast with Theorem \ref{thm:LAP}, Theorem \ref{thm:LAP_1D} does not require Assumption  \ref{assm:alph_bet_trap}. This is because in the one-dimensional setting, the rays can only be associated with left- and right-propagating waves, and the trapping cannot occur for the regular coefficients.
\end{rem}

These results are summarized in Table~\ref{table:summary}.


\begin{table}[ht]
  \begin{tabular}{|c|c|c|c|}
    \hline
    & $d=1$ & $d=2$ & $d=3$ \\
    \hline
    assumptions & 1.1', 1.3 & 1.1, 1.2, 1.3& 1.1, 1.2, 1.3\\
    \hline
    $u^{\text{\sc diff}}(\vx,t)$ & $u^{\text{\sc
                                   w}}(\vx,t)-u^{\text{\sc
                                   h}}(\vx,t)-U_\infty$& $u^{\text{\sc
                                                         w}}(\vx,t)-u^{\text{\sc
                                                         h}}(\vx,t)$&
                                                                      $u^{\text{\sc w}}(\vx,t)-u^{\text{\sc h}}(\vx,t)$\\
    \hline
    bound of & \multirow{2}{*}{$C e^{-\Lambda t}$}&
                       \multirow{2}{*}{$C\,\frac{1+\log\left(1+t^2\right)}{\left(1+t^2\right)^{1/2}}$}&
                       \multirow{2}{*}{$C\,\frac{1}{\left(1+t^2\right)^{1/2}}$}\\
$\left\Vert u^{\text{\sc
    diff}}\left(\cdot,t\right)\right\Vert_\star$ & & & \\   
\hline    
statement & {Thm.~\ref{thm:LAP_1D}}& {Thm.~\ref{thm:LAP}}&
                                                           {Thm.~\ref{thm:LAP}}\\
\hline
proof & Sect.~\ref{sec:LAP_proof}    &Sect.~\ref{sec:LAP_proof}&Sect.~\ref{sec:LAP_proof}\\
    \hline
time-decay & {Prop.~\ref{prop:1D_decay}}&
                                          {Prop.~\ref{prop:dec_free}, Prop.~\ref{prop:dec_source}}&
                                             {Prop.~\ref{prop:dec_free}, Prop.~\ref{prop:dec_source}}\\
results used & (see~\cite[Thm~1.4]{paper-decay1D})& {Lem.~\ref{lem:gen_dec},
                 Lem.~\ref{lem:spec_dec}}&{Lem.~\ref{lem:gen_dec},
                                           Lem.~\ref{lem:spec_dec}}\\
    &&(proofs: Sect.~\ref{sec:proofs}) & (proofs: Sect.~\ref{sec:proofs})\\
   \hline 
  \end{tabular}  
  \caption{\small{Summary of our results. Here,
    $u^{\text{\sc w}}(\vx,t)$ is the solution to the
    wave problem~\eqref{eq:wave_LAP}, $u^{\text{\sc
        h}}(\vx,t):=e^{-i\omega t}U(\vx)$, with $U$ solution to the
    Helmholtz problem~\eqref{eq:U_Helm}, $U_\infty$ is the
    constant in~\eqref{eq:U_infty_expl}, and $\left\Vert
      u^{\text{\sc diff}}\left(\cdot,t\right)\right\Vert_\star:=\left\Vert u^{\text{\sc diff}}\left(\cdot,t\right)\right\Vert_{H^1\left(\Omega\right)}
+\left\Vert \partial_t u^{\text{\sc diff}}\left(\cdot,t\right)\right\Vert_{L^2\left(\Omega\right)}$.}}
\end{table}\label{table:summary}


\medskip
{\bf Previous results on the LAP.} Let us provide a brief overview of previous works on the LAP.
The simplest version of the LAP dealing with the constant-coefficient,
three-dimensional wave equation has been known at least since 1948
\cite{Tikh-Sam1, Tikh-Sam2}. There, it is proven that this physical principle selects the unique solution of the stationary problem satisfying the Sommerfeld radiation condition.

Starting from the seminal work by Ladyzhenskaya \cite{Ladyzh}, variable-coefficient equations 
of the form $\partial_{t}^{2}u\left(\mathbf{x},t\right)-c^2\left(\mathbf{x}\right)\Delta
u\left(\mathbf{x},t\right)+q\left(\mathbf{x}\right)u\left(\mathbf{x},t\right)=f\left(\mathbf{x}\right)e^{-i\omega t}$ have been considered. Namely, while \cite{Ladyzh,Odeh1} treat the case $c\equiv 0$,   
paper \cite{odeh1961} deals with the case $q\equiv0$. When $q$, $\nabla c$ and $f$ are sufficiently localised, the validity of the LAP is proven in a pointwise sense but a rate of the convergence is not specified.

For the case $c\equiv 0$ and dimension $d=3$, Ramm \cite{Ramm1} establishes an algebraic pointwise convergence and shows that the convergence rate is directly related to the localisation of $q$ and $f$.

Eidus' paper \cite{Eidus} provides an extensive overview of the
results available at the time and treats the problem in great
generality. In particular, it deals with the wave equation arising
from a positive second-order differential operator in divergence form \linebreak 
$-\sum_{k,j=1}^{d}\partial_{x_{k}}\left(a_{kj}\left(\mathbf{x}\right)\partial_{x_{j}}\right)+q\left(\mathbf{x}\right)$. It
is assumed that $q$ is real-valued and locally H{\"o}lder
continuous, each $a_{kj}\in C^2\left(\mathbb{R}^d\right)$ is real-valued, $a_{kj}=a_{jk}$, $1\leq k,j \leq d$, and for any vector  $\mathbf{v}\in
\mathbb{R}^d$, $\sum_{k,j}^d a_{kj}v_k v_j\geq a_0
{\left|\mathbf{v}\right|^2}$ with some $a_0>0$; moreover, it is assumed that $\left|\nabla
  a_{kj}\right|$ and
$q$ decay fast enough at infinity. The
problem is posed in an unbounded domain of $\mathbb{R}^d$ with a
finite boundary where the zero Dirichlet boundary condition is
imposed. However, it is mentioned in \cite[Ch. 2, p. 21]{Eidus} that
the obtained results must also hold if this unbounded (exterior)
domain is taken to be the whole $\mathbb{R}^d$. The time convergence
is proven in $H^1$-norm of the solution and in the $L^2$-norm of
its time derivative, with both norms taken over bounded sets.  

As a generalization, Vainberg \cite{Vainb}, besides geometrical features, also considers higher-order constant coefficient hypoelliptic operators in $\mathbb{R}^d$, whereas Iwasaki \cite{Iwasaki} treats dissipative wave equations with variable dissipation and potential terms.

Ramm \cite{Ramm2} considers a general linear operator and formulates
necessary and sufficient conditions for the validity of the LAP in
terms of certain properties of the resolvent operator.
A more general form of the LAP is formulated, involving time convergence in mean, namely, the convergence of the quantity $\frac{1}{t}\int_0^t e^{i\omega \tau}u\left(\mathbf{x},\tau\right)d\tau$, for $t\rightarrow\infty$, to the stationary solution{. This} is shown to be equivalent to the validity of the limiting absorption principle.        

More recently, the LAP was established for the wave equation of the form {\eqref{eq:wave_LAP}}
by Tamura \cite{Tamura}, but only in dimension $d=3$ and without quantification of the convergence.

In this brief literature review, we have almost entirely
omitted the geometrical issues, which are
the most commonly discussed
aspects
in the literature, see the classical works of Morawetz, e.g. \cite{morawetz1962}, and her collaborators. More on that can also be
found in the introductory part of \cite{Eidus}. Finally, we mention some very recent works related to the validity of the LAP for wave propagation in metamaterials \cite{cassier2017,cassier2022,carvalho2022}.
\medskip

In the present work, we study the LAP for a problem where
both material parameters $\alpha$ and $\beta$ are allowed
to be nonconstant and prove our
results in {spatial dimensions $d=1, 2$ and $3$. The main result
  given in Theorem~\ref{thm:LAP} 
  establishes 
  the validity of the LAP 
  and
  estimates the convergence rates. Additionally, 
  Theorem~\ref{thm:LAP_1D} covers the one-dimensional case
  where a classical formulation of the LAP (i.e. when $U_\infty=0$) is
  known not to be valid~\cite[Sect. 3, Thm. 6]{Eidus1}.}
On a technical side, the novelty of our approach to the proof of the LAP is that it avoids the direct study of the resolvent operator and relies instead on several decay/convergence results. 
{The main features of the present work are:
\begin{itemize}
\item{The LAP is proven for the wave equation with
    nonconstant coefficients, which is not necessarily in divergence form {(i.e.~the equation may have a nonconstant coefficient in front of the divergence operator $\nabla\cdot$)}. Besides the ``classical" case $d=3$, we also consider  $d=2$.}
\item{The validity of the LAP is extended to the case $d=1$ with an appropriate modification.}
\item{The convergence in the LAP is quantified and is shown to be algebraic in time for $d=2,3$ and exponential for $d=1$.}
\end{itemize}} 
{We believe that the exponential and algebraic convergence behavior
  for the cases $d=1$ and $d=2$ are generally sharp, but that the rate
  of the decay for the case $d=3$ might be improved.}
\medskip

{\bf Outline.} The paper is organised as follows. In Section
\ref{sec:conv}, we state time-decay estimates for the time-domain problem
with suitable initial data and source term.
In Section \ref{sec:LAP_proof}, we prove the LAP in the form given
in {Theorems~\ref{thm:LAP} and~\ref{thm:LAP_1D}}. The auxiliary time-decay estimates
of Section \ref{sec:conv}
are proven in Section \ref{sec:proofs}.
Finally, in Section \ref{sec:conc}, we summarise the obtained results and give prospects for further work in related directions.

\section{Time-decay results}
\label{sec:conv}
{In this section, we state some decay-in-time results for}
solutions to the wave equation with sufficiently localised initial
data{, which are used in our proof of the LAP in Section~\ref{sec:LAP_proof} below. The
  proofs of these results are deferred to Section
  \ref{sec:proofs}. More precisely}, we are concerned with the solution of the Cauchy problem
\begin{equation}
\begin{cases}
\partial_{t}^{2}u\left(\mathbf{x},t\right)-\beta^{-1}\left(\mathbf{x}\right)\nabla\cdot\left(\alpha\left(\mathbf{x}\right)\nabla u\left(\mathbf{x},t\right)\right)=f\left(\mathbf{x},t\right),\hspace{1em} \mathbf{x}\in\mathbb{R}^{d},\:t>0,\\
u\left(\mathbf{x},0\right)=u_0\left(\mathbf{x}\right),\hspace{1em}\partial_{t}u\left(\mathbf{x},0\right)=u_1\left(\mathbf{x}\right),\hspace{1em}\mathbf{x}\in\mathbb{R}^{d},
\end{cases}
\label{eq:wave}
\end{equation}
and its constant-coefficient analog with zero source term:
\begin{equation}
\begin{cases}
\partial_{t}^{2}v\left(\mathbf{x},t\right)-c_0^2\Delta v\left(\mathbf{x},t\right)=0,\hspace{1em} \mathbf{x}\in\mathbb{R}^{d},\hspace{1em} t>0,\\
v\left(\mathbf{x},0\right)=v_0\left(\mathbf{x}\right), \hspace{1em}\partial_{t}v\left(\mathbf{x},0\right)=v_1\left(\mathbf{x}\right),\hspace{1em} \mathbf{x}\in\mathbb{R}^{d},
\end{cases}
\label{eq:wave_c0}
\end{equation}
where $c_0:=\sqrt{\alpha_0/\beta_0}$.
We start by considering the problem~\eqref{eq:wave} in the case of localised initial data and zero source term.

\begin{prop}
\label{prop:dec_free}
Let $d\geq2$, $f\equiv 0$. Suppose that  $u_{0}$, $u_{1}\in
H^{2}\left(\mathbb{R}^d\right)$ and $\alpha$, $\beta$ satisfy
Assumptions \ref{assm:alph_bet_base} and \ref{assm:alph_bet_trap}. Additionally, the initial data are assumed to satisfy the following localisation condition: 
\begin{equation}\label{eq:u0u1_loc_cond}
\int_{\mathbb{R}^d} \left(1+\left|\vx\right|^2\right)^{d+1+\epsilon}\left(\left|u_0\left(\vx\right)\right|^2+\left|u_1\left(\vx\right)\right|^2+\left|\Delta u_0\left(\vx\right)\right|^2+\left|\Delta u_1\left(\vx\right)\right|^2\right)d\vx<\infty
\end{equation}
with some $\epsilon>0$. {Denote  $\mathbb{R}_{+}:=\left[0,\infty\right)$.}
Then, for any bounded $\Omega\subset\mathbb{R}^d$, the {unique} solution {$u\in C^{2}\left(\mathbb{R}_{+},L^2\left(\mathbb{R}^d\right)\right)\cap C^{1}\left(\mathbb{R}_{+},H^1\left(\mathbb{R}^d\right)\right)\cap C\left(\mathbb{R}_{+},H^2\left(\mathbb{R}^d\right)\right)$} of
\eqref{eq:wave} obeys the {following} decay estimate with some constant $C>0$, depending on {$u_0$, $u_1$,} $\alpha$, $\beta$, $\epsilon$, $d$, and $\Omega$:
\begin{equation}\label{eq:dec_IC_gen}
\left\Vert u\left(\cdot,t\right) \right\Vert_{H^1\left(\Omega\right)}+\left\Vert \partial_t u\left(\cdot,t\right) \right\Vert_{L^2\left(\Omega\right)}\leq\frac{C}{\left(1+t^2\right)^{\frac{d-1}{2}}},\hspace{1em}t\geq 0.
\end{equation}
\end{prop}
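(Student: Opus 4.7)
The plan is to split $u = v + w$, where $v$ is the free-wave evolution of the same initial data and $w$ is a perturbative correction, and to show that each piece obeys the advertised rate $(1+t^2)^{-(d-1)/2}$: the first by an explicit dispersive estimate for the constant-coefficient problem~\eqref{eq:wave_c0}, and the second by the local energy decay produced by the nontrapping Assumption~\ref{assm:alph_bet_trap}.

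Concretely, let $v$ solve the constant-coefficient problem~\eqref{eq:wave_c0} with $(v_0,v_1)=(u_0,u_1)$, and set $w:=u-v$. Then $w$ has vanishing initial data and satisfies~\eqref{eq:wave} with right-hand side
\[
g(\vx,t) := \bigl(\beta^{-1}(\vx)\nabla\cdot(\alpha(\vx)\nabla) - c_0^2\Delta\bigr) v(\vx,t),
\]
which, thanks to Assumption~\ref{assm:alph_bet_base}, is supported in $\overline{\Omega_{in}}$ for every $t\geq 0$. For $v$ I would invoke Lemma~\ref{lem:spec_dec}: the weighted $L^2$ hypothesis~\eqref{eq:u0u1_loc_cond} on $u_0,u_1$ and their Laplacians (the second-derivative terms being needed to pass from a plain $L^2$ dispersive bound to an $H^1$-energy one) should yield both
\[
\|v(\cdot,t)\|_{H^1(\Omega)} + \|\partial_t v(\cdot,t)\|_{L^2(\Omega)} \lesssim (1+t^2)^{-(d-1)/2},
\]
and, applied on $\Omega_{in}$, $\|g(\cdot,t)\|_{L^2(\mR^d)} \lesssim (1+t^2)^{-(d-1)/2}$.

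The hard step will be to propagate this decay to $w$. A plain Duhamel/energy estimate only yields $\|w(\cdot,t)\|_{H^1(\Omega)} + \|\partial_t w(\cdot,t)\|_{L^2(\Omega)} \leq \int_0^t \|g(\cdot,s)\|_{L^2(\mR^d)}\,ds$, whose right-hand side is not integrable in time when $d=2$. This is exactly where the nontrapping Assumption~\ref{assm:alph_bet_trap} enters, through Lemma~\ref{lem:gen_dec}, which should provide local energy decay for the propagator of~\eqref{eq:wave} driven by compactly supported data. Applying Lemma~\ref{lem:gen_dec} inside the Duhamel representation of $w$ — for instance by splitting the integral at $s=t/2$, controlling the late-time part by the decay of $g$ and propagating the energy accumulated on $[0,t/2]$ to time $t$ via the local energy decay on $\Omega$ — should recover the full rate $(1+t^2)^{-(d-1)/2}$. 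In this scheme, Lemma~\ref{lem:spec_dec} supplies the explicit dispersive rate and Lemma~\ref{lem:gen_dec} absorbs the nontrapping perturbation, so the bulk of the technical work lies in the quantitative matching of the two inside the Duhamel integral.
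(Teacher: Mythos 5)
Your proposed decomposition $u=v+w$ does not match the paper's argument, and as written it has two genuine gaps.

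First, you misattribute the roles of Lemmas~\ref{lem:gen_dec} and~\ref{lem:spec_dec}. Both of these lemmas concern the \emph{constant-coefficient} problem~\eqref{eq:wave_c0} and apply only to initial data with a very specific separated, oscillatory structure (radial profile $e^{i\omega\rho/c_0}/\rho^{(d-1)/2}$ at infinity, times a smooth angular factor, plus a faster-decaying remainder); they yield the rate $\mathcal{O}(1/t)$, not $(1+t^2)^{-(d-1)/2}$. They cannot be invoked for the free evolution of general data satisfying only the weighted $L^2$ condition~\eqref{eq:u0u1_loc_cond}, and Lemma~\ref{lem:gen_dec} is emphatically not a local energy decay statement for the variable-coefficient propagator of~\eqref{eq:wave} --- the nontrapping Assumption~\ref{assm:alph_bet_trap} plays no role in either lemma. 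The actual source of local energy decay for the nontrapping operator $P=-\beta^{-1}\nabla\cdot(\alpha\nabla)$ is the weighted operator-norm estimates of Bouclet--Burq, $\bigl\Vert q_\nu^{-1}\tfrac{\sin(t\sqrt{P})}{\sqrt{P}}q_\nu^{-1}\bigr\Vert_{L^2\to H^1}\lesssim (1+t^2)^{-(d-1)/2}$ and $\bigl\Vert q_\nu^{-1}\cos(t\sqrt{P})q_\nu^{-1}\bigr\Vert_{L^2\to L^2}\lesssim (1+t^2)^{-d/2}$, which the paper applies \emph{directly} to $u=\cos(t\sqrt{P})u_0+\tfrac{\sin(t\sqrt{P})}{\sqrt{P}}u_1$ (together with an integration-by-parts argument to recover $\Vert\nabla u\Vert_{L^2}$ from $u$ and $Pu$, and a differentiation in $t$ for $\partial_t u$). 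No free/perturbed splitting is performed, and no Duhamel iteration is needed.

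Second, even granting a dispersive bound $\Vert g(\cdot,t)\Vert_{L^2}\lesssim(1+t^2)^{-(d-1)/2}$ for your compactly supported source, the Duhamel step for $w$ is exactly the content of Proposition~\ref{prop:dec_source} (itself proved via the same Bouclet--Burq estimates, so your scheme does not avoid that input). Applying it with $p=d-1$ gives, for $d=2$ (i.e.\ $p=1$), the rate $\tfrac{1+\log(1+t^2)}{(1+t^2)^{1/2}}$, which carries a logarithmic loss over the clean $(1+t^2)^{-1/2}$ asserted in~\eqref{eq:dec_IC_gen}. So your route, even if the first gap were repaired, would prove a strictly weaker statement in dimension two. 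The direct functional-calculus argument of the paper avoids this loss because the weighted propagator estimates are applied once, not convolved in time.
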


For the case of zero initial data and a localised source term, we have
the following result.

\begin{prop} \label{prop:dec_source}
Let $d\geq2$, $u_0\equiv 0$, $u_1\equiv 0$ and
$\alpha$, $\beta$ satisfy Assumptions~\ref{assm:alph_bet_base}
and \ref{assm:alph_bet_trap}. Additionally, suppose that  {$f\in C^{1}\left(\mathbb{R_+}, L^2\left(\mathbb{R}^d\right)\right)\cap C\left(\mathbb{R_+}, H^1\left(\mathbb{R}^d\right)\right)$}, 
$\underset{t>0}{\cup}\supp f\left(\cdot,t\right)\subset\Omega_f$ for some bounded {domain} $\Omega_f\subset\mathbb{R}^d$, and
there exist constants {$C_f$}, $p>0$ such that  
\begin{equation}\label{eq:assf}
\left\Vert f\left(\cdot,t\right)\right\Vert_{L^2\left(\mathbb{R}^d\right)}+\left\Vert \partial_t f\left(\cdot,t\right)\right\Vert_{L^2\left(\mathbb{R}^d\right)}\leq \frac{{C_f}}{\left(1+t^2\right)^\frac{p}{2}},\hspace{1em}t\geq 0.
\end{equation}
Then, for any bounded domain $\Omega\subset\mathbb{R}^d$, the {unique} solution  {$u\in C^{2}\left(\mathbb{R}_{+},L^2\left(\mathbb{R}^d\right)\right)\cap C^{1}\left(\mathbb{R}_{+},H^1\left(\mathbb{R}^d\right)\right)\cap C\left(\mathbb{R}_{+},H^2\left(\mathbb{R}^d\right)\right)$} of \eqref{eq:wave} obeys the following decay estimates for $t\geq 0$ and some constant $C>0$ depending on $C_f$, $\alpha$, $\beta$, $p$, $d$, and $\Omega$.\\
For $d=2$:
\begin{equation}\label{eq:dec_source_2D}
\left\Vert u\left(\cdot,t\right) \right\Vert_{H^1\left(\Omega\right)}+\left\Vert \partial_t u\left(\cdot,t\right) \right\Vert_{H^1\left(\Omega\right)}\leq C
\begin{cases}
\dfrac{1+\log\left(1+t^2\right)}{\left(1+t^2\right)^\frac{p}{2}},& 0<p\leq1,\\
\dfrac{1}{\left(1+t^2\right)^\frac{1}{2}},& p>1,
\end{cases}
\end{equation}
For $d>2$:
\begin{equation}\label{eq:dec_source_gen}
\left\Vert u\left(\cdot,t\right) \right\Vert_{H^1\left(\Omega\right)}+\left\Vert \partial_t u\left(\cdot,t\right) \right\Vert_{H^1\left(\Omega\right)}\leq C
\begin{cases}
\dfrac{1}{\left(1+t^2\right)^\frac{p}{2}},& 0<p\leq1,\\
\dfrac{1}{\left(1+t^2\right)^\frac{r}{2}},& p>1,
\end{cases}
\end{equation}
where $r:=\min\left(d-1,p\right)$.
\end{prop}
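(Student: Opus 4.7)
The plan is to reduce the estimate for the forced problem to the decay of the homogeneous wave propagator (Proposition~\ref{prop:dec_free}, and more precisely the underlying Lemmas~\ref{lem:gen_dec}--\ref{lem:spec_dec}) via Duhamel's formula and a time-splitting argument. Since $u_0 = u_1 = 0$, we have
\begin{equation*}
u(\cdot,t) = \int_0^t V(\cdot,t;s)\,\mathrm{d}s,
\end{equation*}
where $V(\cdot,\cdot;s)$ is the source-free wave evolution with initial data $V(\cdot,s;s) = 0$, $\partial_t V(\cdot,s;s) = f(\cdot,s)$ at time $s$. Since $\supp f(\cdot,s) \subset \Omega_f$ for every $s$, each $V(\cdot,\cdot;s)$ is generated from data compactly supported in the fixed set $\Omega_f$, and the localization hypothesis~\eqref{eq:u0u1_loc_cond} holds with a constant dominated by $\|f(\cdot,s)\|_{L^2}$.

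Fix a small constant $T_0>0$ and split $u = u^\flat + u^\sharp$ with
\begin{equation*}
u^\sharp(\cdot,t) := \int_{\max(0,t-T_0)}^{t} V(\cdot,t;s)\,\mathrm{d}s, \qquad u^\flat(\cdot,t) := \int_0^{\max(0,t-T_0)} V(\cdot,t;s)\,\mathrm{d}s.
\end{equation*}
For $u^\sharp$, the propagation time is bounded by $T_0$, so energy conservation combined with finite speed of propagation yields $\|V(\cdot,t;s)\|_{H^1(\Omega)} \leq C_{T_0}\|f(\cdot,s)\|_{L^2}$ uniformly in the relevant range of $s$; together with~\eqref{eq:assf}, this gives $\|u^\sharp(\cdot,t)\|_{H^1(\Omega)} \lesssim (1+t^2)^{-p/2}$. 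For $u^\flat$, Proposition~\ref{prop:dec_free} (more precisely, its low-regularity counterpart from Lemmas~\ref{lem:gen_dec}--\ref{lem:spec_dec}, see below) applied to $V(\cdot,t;s)$ provides $\|V(\cdot,t;s)\|_{H^1(\Omega)} \lesssim (1+(t-s)^2)^{-(d-1)/2}\|f(\cdot,s)\|_{L^2}$, reducing the bound to the convolution
\begin{equation*}
I_d(t) := \int_0^{t-T_0} (1+(t-s)^2)^{-(d-1)/2}(1+s^2)^{-p/2}\,\mathrm{d}s,
\end{equation*}
which is estimated by splitting its domain at $s=t/2$: on $[0,t/2]$ the first factor is $O(t^{-(d-1)})$ and the integral of the second is $O(1)$, $O(\log t)$, or $O(t^{1-p})$ according as $p>1$, $p=1$, or $p<1$; on $[t/2,t-T_0]$ the second factor is $O(t^{-p})$ and the integral of the first is $O(1)$ for $d\geq 3$ and $O(\log t)$ for $d=2$. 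Combining these contributions produces exactly the rates in~\eqref{eq:dec_source_2D} and~\eqref{eq:dec_source_gen}, the logarithmic correction for $d=2$ stemming from the marginal integrability of the propagator decay.

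The estimate for $\|\partial_t u(\cdot,t)\|_{H^1(\Omega)}$ is obtained by differentiating Duhamel's formula in $t$ and integrating by parts in $s$, using the identity $\partial_\tau S_1(\tau) = S_0(\tau)$, where $S_0(\tau)g$ (respectively $S_1(\tau)g$) denotes the homogeneous wave evolution at time $\tau$ from initial data $(g,0)$ (respectively $(0,g)$). This yields
\begin{equation*}
\partial_t u(\cdot,t) = S_1(t)\, f(\cdot,0) + \int_0^t S_1(t-s)\,\partial_s f(\cdot,s)\,\mathrm{d}s,
\end{equation*}
whose first term is controlled by Proposition~\ref{prop:dec_free} applied to initial data $(0,f(\cdot,0))$, and whose second term has exactly the same structure as $u$ itself, with $\partial_s f$ in place of $f$; by hypothesis~\eqref{eq:assf}, $\partial_s f$ satisfies the same decay estimate, so the preceding analysis applies verbatim. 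The principal technical obstacle is the regularity gap between the $L^2$ hypothesis on $f$ (and on $\partial_s f$) and the $H^2$-weighted initial-data requirement of Proposition~\ref{prop:dec_free}; this will be bridged by the lower-regularity Lemmas~\ref{lem:gen_dec}--\ref{lem:spec_dec}, which should handle compactly supported $L^2$ data directly, at the possible cost of weaker but still sufficient control of the $\Omega$-dependence of constants.
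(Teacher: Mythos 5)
Your overall architecture — Duhamel's formula, a pointwise-in-$s$ decay bound for the propagator applied to $f(\cdot,s)$, a splitting of the resulting convolution at $s=t/2$, and the reduction of the $\partial_t u$ estimate to the same structure via $\partial_t u = S_1(t)f(\cdot,0)+\int_0^t S_1(t-s)\partial_s f(\cdot,s)\,ds$ — coincides with the paper's proof, and your case analysis of the convolution integral reproduces the correct rates, including the logarithm for $d=2$. The paper does not need your $u^\flat/u^\sharp$ near-time splitting because its kernel $(1+(t-\tau)^2)^{-(d-1)/2}$ is already bounded at $\tau=t$, but that extra step is harmless.

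The genuine gap is in the justification of the key bound $\|V(\cdot,t;s)\|_{H^1(\Omega)}\lesssim (1+(t-s)^2)^{-(d-1)/2}\|f(\cdot,s)\|_{L^2}$. Neither of the tools you invoke delivers it. Proposition~\ref{prop:dec_free} requires initial data in $H^2(\mathbb{R}^d)$ satisfying the weighted condition~\eqref{eq:u0u1_loc_cond}, which involves $\Delta u_1$ in a weighted $L^2$ space; here $f(\cdot,s)$ is merely $L^2$ with compact support, and moreover the implied constant in Proposition~\ref{prop:dec_free} is not exhibited as being controlled by $\|f(\cdot,s)\|_{L^2}$ alone. Lemmas~\ref{lem:gen_dec} and~\ref{lem:spec_dec} are no remedy: they concern the \emph{constant-coefficient} equation with highly structured $C^5$ initial data of a prescribed oscillatory radial form $e^{i\omega\rho/c_0}/\rho^{(d-1)/2}$ outside a ball, and say nothing about generic compactly supported $L^2$ data for the variable-coefficient operator. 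What actually closes this gap — and what the paper uses — is the operator-norm local energy decay estimate of Bouclet--Burq (the paper's~\eqref{eq:wave_inv_estim0}), namely $\bigl\Vert \sin\left(t\sqrt{P}\right)P^{-1/2}\chi_\Omega\bigr\Vert_{L^2(\mathbb{R}^d)\to H^1(\Omega)}\leq C_0(1+t^2)^{-(d-1)/2}$, which is tailored exactly to the sine propagator acting on compactly supported $L^2$ data (the $P^{-1/2}$ provides the gain of one derivative). Without citing an estimate of this operator-norm type, the central inequality of your argument is asserted rather than proved.
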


Next, we consider the wave equation~\eqref{eq:wave_c0} with constant
coefficients and $f\equiv 0$.

\begin{lem}\label{lem:gen_dec}
Let $d=2, 3$ 
and $\omega$, $\rho_0>0$, $\rho_1>\rho_0$ be some fixed
constants. Let $\mathbb{S}^{d-1}:=\left\{\vx\in\mathbb{R}^d:
  \left|\vx\right|=1\right\}$ be the $\left(d-1\right)$-dimensional
unit sphere and let $\mathbb{B}_{\rho_0}:=\left\{\vx\in\mathbb{R}^d:
  \left|\vx\right|<\rho_0 \right\}$ be the ball of radius $\rho_0$,
both centered at $\vx=\mathbf{0}$.
Fix $\Omega\Subset\mathbb{B}_{\rho_0}$ meaning that
$\Omega\subseteq\mathbb{B}_{\rho_0-\epsilon}\subset\mathbb{B}_{\rho_0}$
for some $\epsilon>0$.
We make the following assumptions on the initial conditions $v_0$ and $v_1$.
\begin{itemize}
\item {For $d=2$,} we assume that
\begin{equation}\label{eq:gen_ICs}
v_0\left(\mathbf{x}\right)=A_0\left(\left|\mathbf{x}\right|\right)Y_0\left(\frac{\mathbf{x}}{\left|\mathbf{x}\right|}\right)+V_0\left(\vx\right),\hspace{1em}  v_1\left(\mathbf{x}\right)=A_0\left(\left|\mathbf{x}\right|\right)Y_1\left(\frac{\mathbf{x}}{\left|\mathbf{x}\right|}\right)+V_1\left(\vx\right),
\end{equation}
where {$A_0\in
C^6\left(\mathbb{R}_{+}\right)$, $Y_0\in C^6\left(\mathbb{S}^1\right)$, $Y_1\in C^5\left(\mathbb{S}^1\right)$, $V_0\in C^6_b\left(\mathbb{R}^{2}\right)$, $V_1\in C^5_b\left(\mathbb{R}^{2}\right)$} (we use $C_b$ to denote the space of continuous bounded functions)
are such that
$A_0\left(\left|\vx\right|\right)=V_0\left(\vx\right)=V_1\left(\vx\right)\equiv
0$ for $\left|\vx\right|\leq\rho_0$, and that there exists a constant $C_0>0$ such that 
\begin{align}\label{eq:V0V1_dec}
\left|\vx\right|^{5/2} 
\left(\left|V_0\left(\vx\right)\right|+\left|V_1\left(\vx\right)\right|+\left|\nabla V_0\left(\vx\right)\right|+\left|\nabla V_1\left(\vx\right)\right|+\left|\Delta V_0\left(\vx\right)\right|\right.&\\
\left.+\left|\Delta V_1\left(\vx\right)\right|+\left|\nabla\Delta V_0\left(\vx\right)\right|+\left|\nabla\Delta  V_1\left(\vx\right)\right|+\left|\Delta^2 V_0\left(\vx\right)\right|+\left|\Delta^2 V_1\left(\vx\right)\right|\right)&\leq C_0\nonumber
\end{align}
holds true for all $\vx\in\mathbb{R}^{2}$. Moreover,
$A_0\left(\rho\right)=e^{i\frac{\omega}{c_0}\rho}\slash\rho^{3/2}
$
for $\rho>\rho_1$. 
\item {For $d=3$, we assume that {$v_0\in C^{6}(\mR^3)$, $v_1\in C^{5}\left(\mathbb{R}^3\right)$}
and that there exists a constant $C_0>0$ such that
\begin{align}\label{eq:assumption3D}
\left|\vx\right|^{2} 
\left(\left|v_0\left(\vx\right)\right|+\left|v_1\left(\vx\right)\right|+\left|\nabla v_0\left(\vx\right)\right|+\left|\nabla v_1\left(\vx\right)\right|+\left|\Delta v_0\left(\vx\right)\right|\right.&\\
\left.+\left|\Delta v_1\left(\vx\right)\right|+\left|\nabla\Delta v_0\left(\vx\right)\right|+\left|\nabla\Delta  v_1\left(\vx\right)\right|+\left|\Delta^2 v_0\left(\vx\right)\right|+\left|\Delta^2 v_1\left(\vx\right)\right|\right)&\leq C_0\nonumber
\end{align}
holds true for all $\vx\in\mathbb{R}^3$.}

\end{itemize}

Then, there exists a constant $C>0$ such that, for all $\vx\in\Omega$ 
and $t\geq 0$, the {unique} solution {$v\in C^{5}\left(\mathbb{R}^{d}\times\mathbb{R}_{+}\right)$} of \eqref{eq:wave_c0} with the initial
data as above satisfies
\begin{align}\label{eq:sol_gen_bnds}
\left|v\left(\mathbf{x},t\right)\right|+\left|\nabla v\left(\mathbf{x},t\right)\right|+\left|\partial_t v\left(\mathbf{x},t\right)\right|&\\
+\left|\Delta v\left(\mathbf{x},t\right)\right|+\left|\partial_{t}\nabla v\left(\mathbf{x},t\right)\right|+\left|\partial_{t}\Delta v\left(\mathbf{x},t\right)\right|&\leq \frac{C}{\left(1+t^2\right)^{1/2}}.\nonumber
\end{align}
\end{lem}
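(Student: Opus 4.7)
My plan is to treat the two space dimensions separately, using in each case the explicit representation formula for the constant-coefficient wave equation.

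In the three-dimensional case I would rely on Kirchhoff's formula together with the strong Huygens principle,
\begin{equation*}
v(\mathbf{x},t) = t\,M_{v_1}(\mathbf{x}, c_0 t) + \partial_t\bigl(t\,M_{v_0}(\mathbf{x}, c_0 t)\bigr),
\end{equation*}
where $M_w(\mathbf{x}, r)$ denotes the spherical mean of $w$ over $\{|\mathbf{y}-\mathbf{x}|=r\}$. For $\mathbf{x}\in\Omega\subset\mathbb{B}_{\rho_0-\epsilon}$ and $c_0 t \geq 2\rho_0$, every point $\mathbf{y}$ on the sphere $\{|\mathbf{y}-\mathbf{x}|=c_0 t\}$ obeys $|\mathbf{y}|\geq c_0 t - \rho_0 \gtrsim t$, so the decay bound \eqref{eq:assumption3D} gives $|v_j(\mathbf{y})| \lesssim 1/t^2$ and hence $|t M_{v_j}(\mathbf{x},c_0 t)|\lesssim 1/t$. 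The required estimates for $\partial_t v$, $\nabla v$, $\Delta v$, $\partial_t\nabla v$ and $\partial_t \Delta v$ follow by observing that each of them solves the free wave equation with initial data built from $v_0$, $v_1$ and their gradients and Laplacians up to $\nabla\Delta v_j$ and $\Delta^2 v_j$. The derivatives listed in \eqref{eq:assumption3D} have been chosen precisely so that all of these auxiliary initial data inherit the $|\mathbf{x}|^{-2}$ decay bound, and the same Kirchhoff argument then applies verbatim. The short-time range $t\in[0,T_0]$ is absorbed into the constant $C$ via a standard energy estimate combined with the $W^{5,\infty}$/$W^{4,\infty}$ regularity of $v_0$, $v_1$.

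For $d=2$ strong Huygens' fails and I would instead use the Poisson/descent formula
\begin{equation*}
v(\mathbf{x},t) = \frac{1}{2\pi c_0}\int_{|\mathbf{y}-\mathbf{x}|<c_0t}\frac{v_1(\mathbf{y})\,\mathrm{d}\mathbf{y}}{\sqrt{c_0^2 t^2 - |\mathbf{y}-\mathbf{x}|^2}} + \partial_t\!\left[\frac{1}{2\pi c_0}\int_{|\mathbf{y}-\mathbf{x}|<c_0t}\frac{v_0(\mathbf{y})\,\mathrm{d}\mathbf{y}}{\sqrt{c_0^2 t^2 - |\mathbf{y}-\mathbf{x}|^2}}\right],
\end{equation*}
together with the linear splitting $v = v^V + v^A$ induced by $v_j = V_j + A_0 Y_j$. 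For the non-radiating remainder $v^V$, the algebraic decay $|V_j(\mathbf{y})|+|\nabla V_j(\mathbf{y})|+\cdots \lesssim |\mathbf{y}|^{-5/2}$ from \eqref{eq:V0V1_dec} is more than sufficient: after passing to polar coordinates $\mathbf{y}-\mathbf{x}=s\hat\sigma$ and substituting $s = c_0 t \sin\theta$, the Abel-type singularity of the kernel integrates away and a direct estimate yields the $1/t$ bound on $\Omega$. The higher-order decay of $\nabla V_j$, $\Delta V_j$, $\nabla\Delta V_j$, $\Delta^2 V_j$ allows the same argument to be carried out for the derivatives of $v^V$ listed in the lemma.

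The main obstacle is the piece $v^A$ coming from the oscillating radial mode $A_0(|\mathbf{y}|)Y_j(\hat{\mathbf{y}})$ with $A_0(\rho) = e^{i\omega\rho/c_0}/\rho^{3/2}$ for $\rho>\rho_1$: this contribution only decays as $|\mathbf{y}|^{-3/2}$, which is critical for $L^2$-membership in $\mathbb{R}^2$, and the generic energy decay of the $d=2$ free wave equation would produce only $O(1/\sqrt{t})$, one power short of the target. My plan is to exploit the interaction between the oscillation $e^{i\omega|\mathbf{y}|/c_0}$ in the data and the Abel kernel $(c_0^2 t^2 - |\mathbf{y}-\mathbf{x}|^2)^{-1/2}$, which concentrates near $|\mathbf{y}-\mathbf{x}|=c_0 t$. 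Writing $\mathbf{y}=\mathbf{x}+s\hat\sigma$ and expanding $|\mathbf{y}| = s + \hat\sigma\cdot\mathbf{x} + O(s^{-1})$ in the dominant region $s\asymp c_0 t$, the phase $\omega s/c_0$ is non-stationary in $s$ and a single integration by parts gains an extra factor $1/t$. The boundary term at $s=c_0 t$ is controlled by the Abel-type vanishing $\sqrt{c_0 t -s}$, while the lower endpoint disappears because $A_0\equiv 0$ for $|\mathbf{y}|\leq\rho_0$; the angular integration against $Y_j\in C^5(\mathbb{S}^1)$ is harmless. Derivatives of $v^A$ are obtained by commuting $\nabla_{\mathbf{x}}$ and $\partial_t$ under the Poisson integral, producing additional powers of $s$ or extra poles $1/(c_0^2 t^2 - s^2)$, absorbed by the $C^5$-regularity of $A_0$ and $Y_j$, which provides enough smoothness for the required repeated integrations by parts. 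Orchestrating these oscillatory-integral estimates uniformly in $\mathbf{x}\in\Omega$ and in the angular variable is, I expect, the only genuinely non-routine step of the proof.
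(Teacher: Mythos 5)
Your overall strategy (explicit Kirchhoff/Poisson representations, splitting off the polynomially decaying remainder $V_j$, and attacking the critical $A_0Y_j$ piece in $d=2$ by non-stationary phase in the radial variable) is the same as the paper's, but the step you yourself flag as the crux contains a genuine error. In the Poisson integral the Abel kernel $(c_0^2t^2-s^2)^{-1/2}$ \emph{blows up} like $(c_0t-s)^{-1/2}$ at the light cone $s=c_0t$; it does not vanish there. Consequently a single integration by parts against $e^{i\omega s/c_0}\,ds$ produces an infinite boundary term at $s=c_0t$ and a new integrand containing $\partial_s(c_0^2t^2-s^2)^{-1/2}\sim(c_0t-s)^{-3/2}$, which is not integrable. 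Your phrase ``the boundary term at $s=c_0t$ is controlled by the Abel-type vanishing $\sqrt{c_0t-s}$'' has the singularity upside down, and the argument collapses exactly at the point where all the work lies. The paper handles this by first adding and subtracting the value of the smooth factor at the endpoint: the subtracted piece ($P_2$ in \eqref{eq:P_prelim}) is a Fresnel-type integral $\int_0^1 e^{-ir\omega t}r^{-1/2}\,dr=\mathcal{O}(t^{-1/2})$ (Lemma~\ref{lem:app_osc_int_estim}) multiplied by an $\mathcal{O}(t^{-1/2})$ prefactor, while the remaining piece can be integrated by parts with vanishing boundary terms, and its worst summand ($I_4$ in \eqref{eq:I4_def0}) still requires a difference-quotient estimate \eqref{eq:F6_estim} to tame the $(1-r)^{-3/2}$ singularity. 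Some device of this kind is unavoidable; without it your plan does not close.

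A second, smaller gap concerns the gradient bounds. You claim the listed derivatives in \eqref{eq:V0V1_dec}, \eqref{eq:assumption3D} were ``chosen precisely'' so that the representation-formula argument applies verbatim to $\nabla v$ and $\partial_t\nabla v$. It does not: $\nabla v=Z[\nabla v_0,\nabla v_1]$ and its Kirchhoff/Poisson representation involves the full Hessians $\nabla^2 v_0$, $\nabla^2 v_1$, whose decay is \emph{not} assumed (only gradients, Laplacians, $\nabla\Delta$ and $\Delta^2$ are controlled). The hypotheses are tailored to iterating the Laplacian, which commutes with the wave operator; the paper then recovers the gradient bounds a posteriori from the $L^\infty$ bounds on $Z$ and $\Delta Z$ via an interior elliptic regularity interpolation \eqref{eq:inter_reg} on a slightly enlarged domain, and obtains the $\partial_t$ bounds by interpolating in time between $Z$ and $\partial_t^2Z=c_0^2\Delta Z$ (Lemma~\ref{1D-interpol}). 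You need an analogous mechanism, or strictly stronger hypotheses on the data.
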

Note that, in the result of Lemma \ref{lem:gen_dec}, we use smoothness and the presence of the oscillatory exponential {term} in the radial factor {in the case
$d=2$} to deduce the $\mathcal{O}\left(1/t\right)$ decay instead of the more classical $L^\infty$--decay $\mathcal{O}\left(1/t^{1/2}\right)$ of the solution under absolute integrability and some regularity assumptions on the initial data (see e.g. \cite{beals1996}).

In a similar vein, we can obtain the same decay rate as in Lemma \ref{lem:gen_dec} for initial data decaying even slower at infinity. To this effect, we require an additional condition, namely, that $v_1$ is the radial derivative of $v_0$ multiplied by $-c_0$.

\begin{lem}\label{lem:spec_dec}
Let $d=2, 3$ 
and $\omega$, $\rho_0>0$, $\rho_1>\rho_0$ be some fixed
constants.
Using the notation introduced in Lemma \ref{lem:gen_dec}, suppose that $\Omega\Subset\mathbb{B}_{\rho_0}$.
Assume that
\begin{equation}\label{eq:spec_ICs}
v_0\left(\mathbf{x}\right)=A\left(\left|\mathbf{x}\right|\right)Y\left(\frac{\mathbf{x}}{\left|\mathbf{x}\right|}\right),\hspace{1em}  v_1\left(\mathbf{x}\right)=-c_0\partial_{\left|\mathbf{x}\right|}v_0\left(\mathbf{x}\right)=-c_0 A^\prime\left(\left|\mathbf{x}\right|\right)Y\left(\frac{\mathbf{x}}{\left|\mathbf{x}\right|}\right),
\end{equation}
where $\partial_{\left|\vx\right|}$ denotes the derivative in the radial direction of the variable $\vx$, $A\in {C^7}\left(\mathbb{R}_{+}\right)$, $Y\in {C^7}\left(\mathbb{S}^{d-1}\right)$ such that $A\left(\rho\right)\equiv 0$ for $\rho\in\left[0,\rho_0\right]$ and 
{$A\left(\rho\right)=e^{i\frac{\omega}{c_0}\rho}/\rho^{\frac{d-1}{2}}$} for $\rho>\rho_1$. 
	  Then, there exists a constant $C>0$ such that, for all $\vx\in\Omega$ 
and $t\geq 0$, the {unique} solution {$v\in C^{6}\left(\mathbb{R}^{d}\times\mathbb{R}_{+}\right)$} of \eqref{eq:wave_c0} with the initial data \eqref{eq:spec_ICs} satisfies
\begin{align}\label{eq:sol_der_bnds}
\left|v\left(\mathbf{x},t\right)\right|+\left|\nabla v\left(\mathbf{x},t\right)\right|+\left|\partial_t v\left(\mathbf{x},t\right)\right|&\\
+\left|\Delta v\left(\mathbf{x},t\right)\right|+\left|\partial_{t}\nabla v\left(\mathbf{x},t\right)\right|+\left|\partial_{t}\Delta v\left(\mathbf{x},t\right)\right|&\leq \frac{C}{\left(1+t^2\right)^{1/2}}.\nonumber
\end{align}
\end{lem}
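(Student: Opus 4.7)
The plan is to reduce Lemma \ref{lem:spec_dec} to Lemma \ref{lem:gen_dec} by subtracting an explicit time-harmonic solution whose spatial profile is the exact outgoing Helmholtz solution with the prescribed angular far-field pattern. Let $\Psi(\vx)$ denote the unique outgoing solution of $(\Delta+k^2)\Psi=0$ on $\mathbb{R}^d\setminus\{\mathbf{0}\}$, with $k:=\omega/c_0$, Sommerfeld radiation at infinity, and far-field pattern $\Psi(\vx)\sim e^{ik|\vx|}/|\vx|^{(d-1)/2}Y(\vx/|\vx|)$ as $|\vx|\to\infty$; concretely $\Psi$ is the superposition of Hankel factors $h_\ell^{(1)}(k|\vx|)Y_\ell^m(\vx/|\vx|)$ in $d=3$, respectively $H_\ell^{(1)}(k|\vx|)e^{i\ell\theta}$ in $d=2$, obtained from the spherical-harmonic expansion of $Y$. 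Pick a smooth cutoff $\chi$ with $\chi\equiv 0$ on an open set containing $\Omega$ but still contained in $\mathbb{B}_{\rho_0}$, and $\chi\equiv 1$ outside a marginally larger ball, and set
\begin{equation*}
\psi(\vx,t):=\chi(\vx)\Psi(\vx)e^{-i\omega t}.
\end{equation*}
Since $\chi\equiv 0$ on $\Omega$, the function $\psi$ vanishes there, so $\tilde{v}:=v-\psi$ agrees with $v$ on $\Omega$.

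The function $\tilde{v}$ satisfies \eqref{eq:wave_c0} with spatially compactly supported source $f(\vx,t)=c_0^2 e^{-i\omega t}[2\nabla\chi\cdot\nabla\Psi+\Psi\Delta\chi]$ (the bulk term cancels because $(\Delta+k^2)\Psi=0$) and modified initial data $\tilde{v}_0=v_0-\chi\Psi$, $\tilde{v}_1=-c_0 A'Y+i\omega\chi\Psi$. Combining the first subleading term in the Hankel asymptotics with the outgoing identity $-c_0 A'(r)=-i\omega A(r)+c_0(d-1)/(2r)A(r)$ valid for $r>\rho_1$, a direct computation shows that $\tilde{v}_0$ and $\tilde{v}_1$, together with their spatial derivatives up to $\Delta^2$, decay like $e^{ik|\vx|}/|\vx|^{(d+1)/2}$ at infinity -- one power of $|\vx|$ better than the bare data. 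This gain is precisely what Lemma \ref{lem:gen_dec} requires: for $d=2$ one splits $\tilde{v}_j=A_0(r)Y_j(\vx/|\vx|)+V_j(\vx)$ with $A_0(r)=e^{ikr}/r^{3/2}$ and $V_j$ decaying like $1/|\vx|^{5/2}$, while for $d=3$ one verifies that $|\vx|^2$ multiplied by $|\tilde{v}_j|$, $|\nabla\tilde{v}_j|$, $|\Delta\tilde{v}_j|$, $|\nabla\Delta\tilde{v}_j|$, $|\Delta^2\tilde{v}_j|$ stays uniformly bounded, since each spatial derivative of $e^{ik|\vx|}/|\vx|^2$ is again of order $|\vx|^{-2}$. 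Writing $\tilde{v}=v^{\mathrm{hom}}+v^{\mathrm{inhom}}$ as the sum of the Cauchy contribution of $(\tilde{v}_0,\tilde{v}_1)$ with zero source and the Duhamel contribution of $f$ with zero data, Lemma \ref{lem:gen_dec} already delivers the claimed $(1+t^2)^{-1/2}$ control of $v^{\mathrm{hom}}$ on $\Omega$.

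It remains to bound $v^{\mathrm{inhom}}$ on $\Omega$. Substituting $\tau=t-s$ in Duhamel's formula gives $v^{\mathrm{inhom}}(\vx,t)=e^{-i\omega t}\int_0^t e^{i\omega\tau}S(\tau)G(\vx)\,d\tau$, where $G:=c_0^2(\Delta+k^2)(\chi\Psi)$ is compactly supported in the transition annulus of $\chi$ and $S(\tau)$ is the free wave propagator. The decisive observation is that $\chi\Psi$ itself is an outgoing solution of $(\Delta+k^2)(\chi\Psi)=G/c_0^2$, so uniqueness of the outgoing Helmholtz solution forces $U^{\mathrm{inhom}}:=\int_0^\infty e^{i\omega\tau}S(\tau)G(\vx)\,d\tau=-\chi\Psi$, which vanishes on $\Omega$; hence on $\Omega$ we have $v^{\mathrm{inhom}}(\vx,t)=-e^{-i\omega t}\int_t^\infty e^{i\omega\tau}S(\tau)G(\vx)\,d\tau$. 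For $d=3$ the strong Huygens principle makes this remainder identically zero for $t$ beyond a finite threshold; for $d=2$ the Poisson representation yields the sharp asymptotics $S(\tau)G(\vx)=\frac{\int G}{2\pi c_0^2\tau}+\mathcal{O}(1/\tau^3)$ and $|\partial_\tau S(\tau)G(\vx)|=\mathcal{O}(1/\tau^2)$ on $\Omega$, and a single integration by parts in $\tau$ produces the desired $\mathcal{O}(1/t)$ bound. The pointwise bounds on $\nabla v,\Delta v,\partial_t v,\partial_t\nabla v,\partial_t\Delta v$ follow by running the same scheme on the wave equations solved by these quantities, using Sobolev embedding on a slightly enlarged subdomain to upgrade the $H^1$-type bounds of Lemma \ref{lem:gen_dec} to pointwise ones; the $C^5$ regularity assumed on $A$ and $Y$ supplies just enough derivatives. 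The main technical obstacle is the bookkeeping that, at each differentiation level and in each dimension, the residual data remain within the hypotheses of Lemma \ref{lem:gen_dec} -- the step where the outgoing condition $v_1=-c_0\partial_{|\vx|}v_0$, through the one-power gain in spatial decay it induces via the Hankel asymptotics, is most essentially used.
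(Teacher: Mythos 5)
Your reduction hinges on the existence of an outgoing Helmholtz solution $\Psi$ on the exterior of $\mathbb{B}_{\rho_0}$ whose far-field pattern is exactly the prescribed $Y\in C^5\left(\mathbb{S}^{d-1}\right)$, and such a $\Psi$ does not exist in general. Writing $Y=\sum_n c_n e^{in\theta}$ for $d=2$ (similarly with spherical harmonics for $d=3$), your candidate is $\Psi=\sum_n a_n H_n^{(1)}\left(k r\right)e^{in\theta}$ with $a_n$ proportional to $i^n c_n$; since $\left|H_n^{(1)}\left(kr\right)\right|$ grows like $\left(2n/(ekr)\right)^{n}$ as $n\to\infty$ for every fixed $r$, this series converges on some exterior domain only if the $c_n$ decay super-exponentially. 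This is the classical fact that far-field patterns of radiating solutions form a dense but very thin subspace of $L^2\left(\mathbb{S}^{d-1}\right)$ (restrictions of entire functions); a merely $C^5$ pattern, whose Fourier coefficients decay only polynomially, is in general not attainable. Everything downstream --- the cancellation $\left(\Delta+k^2\right)\Psi=0$ that localises the source $G$, the one-power gain in the residual data via the far-field expansion, and the identification $\int_0^\infty e^{i\omega\tau}S(\tau)G\,d\tau=-\chi\Psi$ by uniqueness of the outgoing solution --- rests on this $\Psi$, so the argument does not go through. Two lesser concerns: the identification of the Duhamel tail with $-\chi\Psi$ is itself the constant-coefficient limiting amplitude principle, whose quantitative $d=2$ version is essentially of the same difficulty as the statement being proved; and Lemma~\ref{lem:gen_dec} already gives pointwise bounds, so no Sobolev-embedding upgrade is needed, but the bookkeeping for $\Delta v$ and $\Delta^2 v$ would require pushing the expansion of $\Psi$ two more orders.

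For contrast, the paper's proof constructs no auxiliary Helmholtz solution: it inserts \eqref{eq:spec_ICs} directly into Poisson's (resp.\ Kirchhoff's) formula and exploits the pointwise identity
$\partial_t A\left(\rho\right)-c_0A^{\prime}\left(\rho\right)=\frac{1}{t}\left(r-1\right)\partial_r A\left(\rho\right)+c_0\bigl(\frac{\vx\cdot\vs+rc_0t}{\left|\vx+\vs rc_0t\right|}-1\bigr)A^{\prime}\left(\rho\right)$,
in which the first term admits an integration by parts in $r$ that removes the $\left(1-r^2\right)^{-1/2}$ singularity and the second is $\mathcal{O}\left((rt)^{-2}\right)$; the remaining oscillatory integrals are then handled by direct estimates and Lemma~\ref{lem:app_osc_int_estim}. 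The outgoing relation $v_1=-c_0\partial_{\left|\vx\right|}v_0$ is thus used pointwise inside the representation formula, rather than through an exact exterior solution that need not exist.
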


In the one-dimensional case, we have the following exponential decay
result{, which is proven in~\cite{paper-decay1D}}.

\begin{prop}[{\cite{paper-decay1D}, Prop. 1.1, Thm. 1.4}]
	\label{prop:1D_decay}
	 Let $d=1$ and $f\equiv 0$.
Suppose that $u_{0}\in H^{1}\left(\mathbb{R}\right)$, $u_{1}\in L^{2}\left(\mathbb{R}\right)$, $\supp u_{0}$, $\supp u_{1}\subset\Omega$ for some bounded $\Omega\subset\mathbb{R}$ and assume $\alpha$, $\beta$, $\Omega_{in}$ be as in Assumption \ref{assm:alph_bet_base_1D}.
Then, for any bounded $\Omega\subset\mathbb{R}$, the {unique} solution {$u\in C^{1}\left(\mathbb{R}_{+},L^2\left(\mathbb{R}\right)\right)\cap C\left(\mathbb{R}_{+},H^1\left(\mathbb{R}\right)\right)$} of \eqref{eq:wave} obeys the decay estimate
	\begin{equation}
	\left\Vert u\left(\cdot,t\right)-u_{\infty}\right\Vert _{H^{1}\left(\Omega\right)}+\left\Vert \partial_{t}u\left(\cdot,t\right)\right\Vert _{L^{2}\left(\Omega\right)}\leq Ce^{-\Lambda t},\hspace{1em}t\geq 0,\label{eq:u_exp_conv}
	\end{equation}
	for some explicit constants $C=C\left(u_{0},u_{1},\alpha,\beta,\left|\Omega\right|\right)$,
	$\Lambda=\Lambda\left(\alpha,\beta\right)>0$  with $\left|\Omega\right|$ denoting the Lebesgue measure of the set $\Omega$, and 
{
	\begin{equation}
	u_{\infty}:=\frac{1}{2\sqrt{\alpha_0\beta_0}}\int_{\Omega}u_{1}\left(x\right)\beta\left(x\right)dx.\label{eq:u_infty_1D}
	\end{equation}
}
\end{prop}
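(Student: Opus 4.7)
The plan is to reduce \eqref{eq:wave} with $d=1$, $f\equiv 0$ to a semigroup/resolvent analysis and exploit the one-dimensional geometry to obtain a genuine strip of analyticity of the resolvent. Write the Cauchy problem as $\partial_t U = \mathcal{A} U$, $U(0) = (u_0, u_1)^T$, on the energy space $\mathcal{H} := H^1(\mR) \times L^2(\mR)$ with $\mathcal{A}(u,v) := (v, \beta^{-1}(\alpha u')')$. For $\operatorname{Re}\lambda > 0$, the Dunford--Taylor representation reads
\begin{equation*}
U(t) = \frac{1}{2\pi i}\int_{\gamma - i\infty}^{\gamma + i\infty} e^{\lambda t}(\lambda I - \mathcal{A})^{-1} U_0\, d\lambda,\qquad \gamma > 0,
\end{equation*}
so the problem reduces to meromorphically extending $(\lambda I - \mathcal{A})^{-1}$ across the imaginary axis and deforming the contour into the left half-plane.

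Second, I would construct the resolvent explicitly. Applying $(\lambda I - \mathcal{A})^{-1}$ to a datum $(g_0, g_1)$ amounts to solving the stationary ODE $\lambda^2 \beta(x)\, w - (\alpha(x) w')' = \lambda \beta g_0 + \beta g_1$ on $\mR$, with outgoing conditions ($w$ decaying like $e^{-(\lambda/c_0)|x|}$ at $\pm\infty$ for $\operatorname{Re}\lambda > 0$). Because $\alpha,\beta$ are constant outside the bounded interval $\Omega_{in}$, the Green's function can be built from two distinguished solutions $\psi_\pm(x,\lambda)$ of the homogeneous ODE equal to $e^{\mp (\lambda/c_0) x}$ in the respective exterior region and matched across $\Omega_{in}$ by integrating the variable-coefficient equation. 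Both $\psi_\pm$ are entire in $\lambda$ by a Gr\"onwall argument on the bounded interval, and their (modified) Wronskian $D(\lambda)$ controls the resolvent: the kernel has the form $K(x,y;\lambda)/D(\lambda)$ with $K$ also entire.

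Third, I would show the crucial analytic statement: $D(\lambda)$ has a single simple zero at $\lambda = 0$ inside a strip $\{\operatorname{Re}\lambda > -\Lambda\}$ and is otherwise nonvanishing. In 1D this is a no-resonance statement following from ballistic ray propagation at bounded speed $\sqrt{\alpha/\beta}$, which prevents accumulation of zeros of $D$ on the imaginary axis. Deforming the contour to $\operatorname{Re}\lambda = -\Lambda'$ for any $0 < \Lambda' < \Lambda$ then picks up only the residue at $\lambda = 0$. Expanding $\psi_\pm(x,\lambda) = 1 + \mathcal{O}(\lambda)$ near the origin identifies this residue as the projection of $U_0$ onto the zero mode of $\mathcal{A}$; the $\beta$-weight in \eqref{eq:u_infty_1D} arises because $\mathcal{A}$ is skew with respect to the $\beta$-weighted inner product, and the prefactor $1/(2\sqrt{\alpha_0\beta_0}) = 1/(2\beta_0 c_0)$ comes from the Wronskian of the exterior fundamental solutions at $\lambda = 0$. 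The remaining integral on the shifted contour is bounded by $C e^{-\Lambda' t}$, yielding \eqref{eq:u_exp_conv}.

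The main obstacle is quantifying the strip width $\Lambda$ and the resolvent bound on the shifted line with enough decay in $|\operatorname{Im}\lambda|$ to justify the contour deformation. Concretely, one must establish that $D(\lambda) \neq 0$ on $\{-\Lambda < \operatorname{Re}\lambda \leq 0\}\setminus\{0\}$ and that $|D(\lambda)|^{-1}\, |K(x,y;\lambda)|$ grows at most polynomially in $|\operatorname{Im}\lambda|$ along this contour. Both reduce to transfer-matrix estimates for $\psi_\pm$ across $\Omega_{in}$, but extracting an \emph{explicit} exponential rate $\Lambda = \Lambda(\alpha,\beta)$, as required by the statement, demands careful control of the variation of $\alpha$, $\beta$ throughout the interval and is the technical heart of the argument.
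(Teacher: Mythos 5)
This proposition is not proven in the paper at all: it is imported verbatim as Theorem~1.4 of the companion work \cite{paper-decay1D}, and Section~\ref{sec:LAP_proof} only invokes it as a black box. There is therefore no internal proof to compare yours against, and your attempt has to be judged on its own merits.

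Your route --- energy-space semigroup, explicit resolvent built from Jost-type solutions $\psi_\pm$ that are pure exponentials outside $\Omega_{in}$, a simple resonance at $\lambda=0$ producing the constant $u_\infty$, and a contour deformation into a resonance-free strip --- is the classical strategy for compactly supported one-dimensional perturbations, and your identification of $u_\infty$ via the residue at $\lambda=0$ (the $\beta$-weight from the weighted inner product and the prefactor $1/(2\beta_0 c_0)=1/(2\sqrt{\alpha_0\beta_0})$ from the exterior Wronskian) is consistent with \eqref{eq:u_infty_1D}. Two points, however, are genuine gaps rather than deferred technicalities. First, the existence of a resonance-free strip $\{-\Lambda<\operatorname{Re}\lambda\le 0\}\setminus\{0\}$ with an \emph{explicit} $\Lambda(\alpha,\beta)$ is essentially the entire content of the theorem; the remark that ``ballistic ray propagation prevents accumulation of zeros on the imaginary axis'' does not exclude a sequence of zeros of $D$ approaching $i\mathbb{R}$ as $|\operatorname{Im}\lambda|\to\infty$, which is precisely what must be ruled out, and ruling it out requires the quantitative transfer-matrix lower bound on $|D(\lambda)|$ that you acknowledge you do not supply. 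Second, your stated target bound --- that $|D(\lambda)|^{-1}\,|K(x,y;\lambda)|$ grows \emph{at most polynomially} along the shifted contour --- is not sufficient: the Dunford integral $\int e^{\lambda t}(\lambda I-\mathcal{A})^{-1}U_0\,d\lambda$ does not converge under polynomial growth of the integrand. You need the cutoff resolvent to \emph{decay} like $|\lambda|^{-1}$ on the shifted line, and then to trade additional powers of $|\lambda|$ against the $H^1\times L^2$ regularity of $(u_0,u_1)$ (for instance by integrating by parts in $\lambda$), before the deformation and the residue extraction are legitimate. Until both estimates are in place, the argument is a plausible programme rather than a proof.
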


\section{Proof of the LAP (Theorems~\ref{thm:LAP} and~\ref{thm:LAP_1D})}\label{sec:LAP_proof}

In this section, we prove Theorems~\ref{thm:LAP} and~\ref{thm:LAP_1D}
at once.
Without loss of generality, we can assume that $\Omega=\Omega_{in}$,
since both domains could be enlarged to their union without changing the
problem. We also suppose that the origin $\mathbf{x}=\mathbf{0}$ is
chosen to be inside $\Omega$.

The proof is given in two steps. In Step~1, see~Section~\ref{sec:step1}, we transform problem~\eqref{eq:wave_LAP}
into an initial-value problem with zero source term for the difference 
\begin{equation}\label{eq:W_def}
W\left(\mathbf{x},t\right):=u\left(\mathbf{x},t\right)-e^{-i\omega t}U\left(\mathbf{x}\right),
\end{equation}
where $u\left(\mathbf{x},t\right)$ and $U\left(\mathbf{x}\right)$
solve problems \eqref{eq:wave_LAP} and \eqref{eq:U_Helm},
respectively.
In Section~\ref{sec:poorly_localised}, we observe that the problem
introduced in Step~1 has poorly localised initial
data, and we write an integral representation, which will be
useful in what follows.
In Step~2, see Section~\ref{sec:step2}, we decompose the problem from Step~1 into
several subproblems. We distinguish the cases $d=1$ and $d\ge 2$.
In the former case, the arguments are more {transparent and lead} to the
quantitative result of Theorem~\ref{thm:LAP_1D}.
{The higher-dimensional case is more involved, as some of the subproblems do not
have sufficiently localised intitial
data and thus require the more specific time-decay results given in Section \ref{sec:conv}.}

\subsection{Step~1: Transformation into an auxiliary homogeneous problem.}\label{sec:step1}
	
By inspection, we see that $W\left(\vx,t\right)$ defined by \eqref{eq:W_def} satisfies	
	\begin{equation}\label{eq:wave_W}
	\begin{cases}
	\partial_{t}^{2}W\left(\mathbf{x},t\right)-\beta^{-1}\left(\mathbf{x}\right)\nabla\cdot\left(\alpha\left(\mathbf{x}\right)\nabla W\left(\mathbf{x},t\right)\right)=0,& \mathbf{x}\in\mathbb{R}^d, \hspace{1em} t>0,\\
	W\left(\mathbf{x},0\right)=-U\left(\mathbf{x}\right),\hspace{1em}\partial_{t}W\left(\mathbf{x},0\right)=i\omega U\left(\mathbf{x}\right),& \mathbf{x}\in\mathbb{R}^d.
	\end{cases}
	\end{equation}
Completing the proofs of Theorems~\ref{thm:LAP} and~\ref{thm:LAP_1D}
is tantamount to showing that there exists a unique constant $U_\infty\in\mathbb{C}$ explicitly given by \eqref{eq:U_infty_expl} and constants $\Lambda$, $C>0$ depending on $F$, $\alpha$, $\beta$, $\omega$ such that\\
for $d=1$:
\begin{equation}\label{eq:W_dec_estim_1D}
\left\Vert
W\left(\cdot,t\right)-U_\infty\right\Vert_{H^1\left(\Omega\right)}
+\left\Vert \partial_t
W\left(\cdot,t\right)\right\Vert_{L^2\left(\Omega\right)}\le C e^{-\Lambda t},\hspace{1em} t\geq0,
\end{equation}
for $d=2$:
\begin{equation}\label{eq:W_dec_estim_2D}
\left\Vert W\left(\cdot,t\right)\right\Vert_{H^1\left(\Omega\right)}+\left\Vert \partial_t W\left(\cdot,t\right)\right\Vert_{L^2\left(\Omega\right)}\leq C\,\frac{1+\log\left(1+t^2\right)}{\left(1+t^2\right)^{1/2}}, \hspace{1em} t\geq0,
\end{equation}
for $d=3$:
\begin{equation}\label{eq:W_dec_estim_3D}
\left\Vert W\left(\cdot,t\right)\right\Vert_{H^1\left(\Omega\right)}+\left\Vert \partial_t W\left(\cdot,t\right)\right\Vert_{L^2\left(\Omega\right)}\leq \frac{C}{\left(1+t^2\right)^{1/2}}, \hspace{1em} t\geq0.
\end{equation}

\subsection{Slow decay of the initial data of problem~\eqref{eq:wave_W}}
\label{sec:poorly_localised}

One immediate difficulty when dealing with \eqref{eq:wave_W} is that the initial data
        $W{\left(\cdot,0\right)}$ and $\partial_t
        W{\left(\cdot,0\right)}$
        do not belong to $H^1\left(\mathbb{R}^d\right)$ and $L^2\left(\mathbb{R}^d\right)$, respectively.
{The slow decay of the initial conditions in~\eqref{eq:wave_W} can be seen as follows. Let us rewrite \eqref{eq:U_Helm} as} the constant-coefficient problem
	\begin{align}
	-\Delta U\left(\mathbf{x}\right)-\frac{\omega^2}{c_0^2}\, U\left(\mathbf{x}\right)=&\frac{1}{\alpha_0}\left[\beta\left(\mathbf{x}\right) F\left(\mathbf{x}\right)+\left(\beta\left(\mathbf{x}\right)-\beta_0\right)\omega^2 U\left(\mathbf{x}\right)+\right. \nonumber\\
	&\left.+\nabla\cdot(\alpha\left(\mathbf{x}\right)\nabla U\left(\mathbf{x}\right))-\alpha_0\Delta U\left(\mathbf{x}\right)\right]\label{eq:constsantcoeff}\\
	=:&F_1\left(\mathbf{x}\right), \nonumber
	\end{align}
        where we recall that $c_0^2=\alpha_0/\beta_0$.
Assumptions~\ref{assm:alph_bet_base} (or \ref{assm:alph_bet_base_1D} if $d=1$) and~\ref{assm:F_base} 
on the coefficients and on $F$ imply that
$F_1\left(\mathbf{x}\right)=0$ for
$\vx\in\mathbb{R}^d\backslash\bar{\Omega}$.
{Moreover, since the coefficients $\alpha$ and $\beta$ are smooth (for $d\ge2$) and bounded away from zero, and $F\in L^2\left(\mathbb{R}^d\right)$, standard well-posedness results (see e.g. \cite[Sec. 6.3.1]{Evans}) give $U\in H^2(\Omega)$, and hence $F_1\in L^1(\Omega)$.
Therefore, we can write the integral representation {of the
  solution $U$ in $\mathbb{R}^d\backslash\bar{\Omega}$}}
	\begin{equation}
	U\left(\mathbf{x}\right)=\int_{\Omega}K\left(\vx-\vy\right)F_1\left(\mathbf{y}\right)d\vy,\hspace{1em}\vx\in\mathbb{R}^d{\backslash\bar{\Omega}}.\label{eq:U_int_repr}
	\end{equation}
Here
	\begin{equation}\label{eq:K_def}
	K\left(\vx\right):=
	\frac{i}{4}\left(\frac{\omega}{2\pi c_0}\right)^\frac{d-2}{2}
        \frac{1}{\left|\vx\right|^\frac{d-2}{2}}\;
          H_\frac{d-2}{2}^{\left(1\right)}\left(\frac{\omega}{c_0}\left|\vx\right|\right)
	\end{equation}
is the Green's function for the Helmholtz equation (see e.g.~\cite{Engquist-Zhao}) that satisfies the Sommerfeld radiation condition
        $\underset{\left|\mathbf{x}\right|\rightarrow\infty}{\lim}\left|\mathbf{x}\right|^\frac{d-1}{2}\left[\partial_{\left|\mathbf{x}\right|}K\left(\mathbf{x}\right)-i\frac{\omega}{c_0}
          K\left(\mathbf{x}\right)\right]=0$ and $-\Delta
        K\left(\vx\right)-\frac{\omega^2}{c_0^2}
        K\left(\vx\right)=\delta\left(\vx\right)$, with $\delta$ being
        the $d$-dimensional Dirac delta function.
        In~\eqref{eq:K_def},  $H_p^{\left(1\right)}$ denotes the
          Hankel function of the first kind of order $p$.
Since $\mathbf{y}$ in \eqref{eq:U_int_repr} ranges in a bounded set
and $F_1\in L^1\left(\Omega\right)$, we employ
  Lemma~\ref{lem:K_estims} from Appendix~\ref{sec:app} and deduce that
\begin{equation}\label{eq:U_dec_Somm}	
U\left(\vx\right)=\mathcal{O}\left(1/\left|\vx\right|^{\left(d-1\right)/2}\right),\quad \partial_{\left|\vx\right|}U\left(\mathbf{x}\right)-\frac{i\omega}{c_0}U\left(\vx\right)=\mathcal{O}\left(1/\left|\vx\right|^{\left(d+1\right)/2}\right),\quad \left|\vx\right|\gg 1.
\end{equation}
This {implies that $U$, and therefore $W\left(\cdot,0\right)$
  and $\partial_t W\left(\cdot,0\right)$, do not belong {necessarily} to
  $L^2\left(\mathbb{R}^d\right)$. At the same time, this} {gives a precise decay rate in} 
the Sommerfeld radiation condition when the source term $F_1$ {in
  \eqref{eq:constsantcoeff}} is compactly supported.

\subsection{Step~2: Time-decay by decomposition into subproblems.}\label{sec:step2}
{In order} 
to deal with the slowly decaying
        initial data in {problem}~\eqref{eq:wave_W} discussed in
          Section~\ref{sec:poorly_localised}{, we} 
        perform some auxiliary {decompositions} {using the linearity of the problem and the uniqueness of its solution. 
        } 
        As $\mathbf{0}\in\Omega$,
        we can fix $R$ large enough and $\epsilon>0$ such that
        $\Omega$ is contained in the open ball
        $\mathbb{B}_{R-\epsilon}\subset\mathbb{R}^d$ of radius $R-\epsilon$ and center $\vx=\mathbf{0}$.
Let $\left\{\eta_0, \eta_1\right\}$ be a smooth, radial partition
  of unity, i.e. $\eta_0=\eta_0\left(\left|\vx\right|\right)$,
  $\eta_1=\eta_1\left(\left|\vx\right|\right)\in C^\infty\left(\mathbb{R}_{+}\right)$, and
  $\eta_0\left(\left|\vx\right|\right)+\eta_1\left(\left|\vx\right|\right)=1$ for all $\vx\in\mathbb{R}^d$, such that
	\begin{equation}\label{POU}
	\eta_0\left(\left|\vx\right|\right)=\begin{cases}
	0, \quad\left|\vx\right|<R-\epsilon,\\
	1, \quad\left|\vx\right|>R,
	\end{cases}\qquad
	\eta_1\left(\left|\vx\right|\right)=\begin{cases}
	1, \quad\left|\vx\right|<R-\epsilon,\\
	0, \quad\left|\vx\right|>R.
	\end{cases}
	\end{equation}
We proceed
separately with the case $d=1$ and the {cases $d=2, 3$}.
\medskip

\noindent
{\bf $\bullet$ Case $d=1$ (Theorem~\ref{thm:LAP_1D}).}
	 
Note that $H_{-1/2}^{\left(1\right)}\left(x\right)=\left(\frac{2}{\pi x}\right)^\frac{1}{2}e^{i x}$, and hence \eqref{eq:K_def} yields, for $d=1$, $K\left(\left|x\right|\right)=\frac{i}{2} \frac{c_0}{\omega}e^{i\frac{\omega}{c_0}\left|x\right|}$. In this case,
the Green function $K$ does not decay at infinity, but
the radiation conditions {on $K$, and thus on $U$,} are exact,
i.e. for $x\notin\Omega$, we have $c_0\partial_{\left|x\right|}U\left(x\right)=i\omega U\left(x\right)$, where $\partial_{\left|x\right|}\equiv\left(\text{sgn }x\right)\partial_x$. Therefore, we can write
	\begin{equation}
	W\left(x,t\right)=\widetilde{W}_0\left(x,t\right)+\widetilde{W}_1\left(x,t\right),\label{eq:W_decomp_1D}
	\end{equation}
	where $\widetilde{W}_0\left(x,t\right)$, $\widetilde{W}_1\left(x,t\right)$ solve the following initial-value problems, {respectively}:
	\begin{equation}\label{eq:W0_tild}
	\begin{cases}
	\partial_{t}^{2}\widetilde{W}_0\left(x,t\right)-\beta^{-1}\left(x\right)\partial_x\left(\alpha\left(x\right)\partial_x \widetilde{W}_0\left(x,t\right)\right)=0, \qquad x\in\mathbb{R}, \hspace{1em} t>0,\\
	\widetilde{W}_0\left(x,0\right)=-\eta_0\left(\left|x\right|\right)U\left(x\right),\hspace{1em}\partial_{t}\widetilde{W}_0\left(x,0\right)=c_0 \partial_{\left|x\right|}\biggl(\eta_0\left(\left|x\right|\right)U\left(x\right)\biggr),\quad x\in\mathbb{R},
	\end{cases}
	\end{equation}
	\begin{equation}\label{eq:W1_tild}
	\begin{cases}
	\partial_{t}^{2}\widetilde{W}_1\left(x,t\right)-\beta^{-1}\left(x\right)\partial_x\left(\alpha\left(x\right)\partial_x \widetilde{W}_1\left(x,t\right)\right)=0, \qquad x\in\mathbb{R}, \hspace{1em} t>0,\\
	\widetilde{W}_1\left(x,0\right)=-\eta_1\left(\left|x\right|\right)U\left(x\right),\hspace{1em}\partial_{t}\widetilde{W}_1\left(x,0\right)={\biggl(c_0\partial_{\left|x\right|}\eta_1\left(\left|x\right|\right)+i\omega\eta_1\left(\left|x\right|\right)\biggr)U\left(x\right)},\quad x\in\mathbb{R}.
	\end{cases}
	\end{equation}
	
	Observe that problem~\eqref{eq:W0_tild}{, whose initial
        data are supported outside $\mathbb{B}_{R-\epsilon}$,} is solved by a linear combination of two reflection-free outgoing waves
	\begin{align}\label{eq:W0_tild_sol}
	\widetilde{W}_0\left(x,t\right)=&-H\left(x-c_0 t\right)\eta_0\left(|x-c_0 t|\right)U\left(x-c_0 t\right)\\
	&-H\left(-x-c_0 t\right)\eta_0\left(|x+c_0 t|\right)U\left(x+c_0 t\right)\nonumber,
	\end{align}
	where $H$ is the Heaviside step function. Note that the smoothness of the solution is not affected by the discontinuity of the Heaviside function due to the vanishing of $\eta_0$.  
	Because of the support property of $\eta_0$, by inspection of
        \eqref{eq:W0_tild_sol}, we have that
{
\begin{equation}\label{eq:W0_tild_estim}
\widetilde{W}_0\left(x,t\right)=\partial_t\widetilde{W}_0\left(x,t\right)\equiv0, \hspace{1em} x\in\Omega,\hspace{1em} t>0.
\end{equation}
}
        
	To deal with $\widetilde{W}_1$ in \eqref{eq:W_decomp_1D}, we observe that the initial data of \eqref{eq:W1_tild} have compact support. Hence, problem~\eqref{eq:W1_tild} is amenable to the application of Proposition~\ref{prop:1D_decay}, which yields
{
\begin{equation}\label{eq:W1_tild_estim}
\left\Vert
          \widetilde{W}_1\left(\cdot,t\right)-U_\infty\right\Vert_{H^1\left(\Omega\right)}+\left\Vert
          \partial_t\widetilde{W}_1\left(\cdot,t\right)\right\Vert_{L^2\left(\Omega\right)}\le
        Ce^{-\Lambda t},\hspace{1em}t\geq 0,
\end{equation}
\begin{align}\label{eq:U_infty_estim}
U_\infty:=&\frac{1}{2\sqrt{\alpha_0\beta_0}}\int_{{-R}}^{{R}}\biggl(c_0\partial_{\left|x\right|}\eta_1\left(\left|x\right|\right)+i\omega\eta_1\left(\left|x\right|\right)\biggr)U\left(x\right)\beta\left(x\right) dx\\
=&\frac{i\omega}{2\sqrt{\alpha_0\beta_0}}\int_{{-R+\epsilon}}^{{R-\epsilon}}U\left(x\right)\beta\left(x\right)dx-\frac{1}{2}\left[U\left({R-\epsilon}\right)+U\left({-R+\epsilon}\right)\right] \nonumber\\
=&\frac{1}{2i\omega\sqrt{\alpha_0\beta_0}}\int_{{-R+\epsilon}}^{{R-\epsilon}} F\left(x\right)\beta\left(x\right)dx\nonumber
\end{align}
for some constants $C$, $\Lambda>0$. 
Note that in passing from the first to the second line in \eqref{eq:U_infty_estim}, $\eta_1$ disappears upon integration by parts using that $\partial_{\left|x\right|}U\left(x\right)=i\omega/c_0 U\left(x\right)$ and $\beta\left(x\right)\equiv\beta_0$ for $x\in\left[{-R},{-R+\epsilon}\right]\cup\left[{R-\epsilon},{R}\right]$. The passage from the second to the third line of the equality is justified upon integration of \eqref{eq:U_Helm} in $x$ over the interval $\left({-R+\epsilon},{R-\epsilon}\right)$ and using again the exact radiation conditions at its endpoints.}

Together with {\eqref{eq:W0_tild_estim} and ~\eqref{eq:W_decomp_1D},  
estimate \eqref{eq:W1_tild_estim} implies \eqref{eq:W_dec_estim_1D}} which completes the proof
        of {Theorem~\ref{thm:LAP_1D}}.  
	\medskip

\noindent
{\bf $\bullet$ Cases $d=2, 3$ (Theorem~\ref{thm:LAP}).}

We perform a decomposition, which is similar to \eqref{eq:W_decomp_1D}
but contains more terms that have to be treated individually in a more delicate
fashion. 

\medskip
\underline{Decomposition of $W$.}
We write {the unique solution of \eqref{eq:wave_W} as }
	\begin{equation}
	W\left(\mathbf{x},t\right)=\sum_{k=1}^4 W_k\left(\mathbf{x},t\right),\label{eq:W_decomp}
	\end{equation}
	where $W_1$ solves the homogeneous wave equation
\begin{equation*}
\partial_{t}^{2}W_1\left(\mathbf{x},t\right)-\beta^{-1}\left(\mathbf{x}\right)\nabla\cdot\left(\alpha\left(\mathbf{x}\right)\nabla W_1\left(\mathbf{x},t\right)\right)=0,\quad \mathbf{x}\in\mathbb{R}^d,\quad t>0,
	\end{equation*}
subject to the initial conditions on $\mathbb{R}^d$:
$$
  W_1\left(\mathbf{x},0\right)=-\eta_1\left(\left|\vx\right|\right)U\left(\vx\right), \quad\partial_{t}W_1\left(\mathbf{x},0\right)=i\omega \eta_1\left(\left|\vx\right|\right)U\left(\vx\right)-c_0 \eta_0^{\prime}\left(\left|\vx\right|\right)U_{0}\left(\vx\right).
$$
$W_2$ and $W_3$ solve the constant-coefficient problems~\eqref{eq:V12_pbm} and~\eqref{eq:V_pbm},  respectively, and $W_4$ solves the inhomogeneous wave equation \eqref{eq:wave_Z12} {with non-constant coefficients.  
For the initial conditions we use the partition of unity \eqref{POU}. }
{The nonzero right-hand side in~\eqref{eq:wave_Z12} is needed to compensate the fact that the equations in problems~\eqref{eq:V12_pbm} and~\eqref{eq:V_pbm} are different from that in problem~\eqref{eq:wave_W}.}

Here we have introduced {$U_0$, the leading term in the long-range
  asymptotic expansion of~\eqref{eq:U_int_repr}. More precisely,
according to representation~\eqref{eq:U_int_repr} and Lemma~\ref{lem:K_estims}, we have 
}
\begin{equation}
	U_{0}\left(\mathbf{x}\right)=\frac{e^{i\frac{\omega}{c_{0}}\left|\mathbf{x}\right|}}{4\pi\left|\mathbf{x}\right|^{\frac{d-1}{2}}}\left(\frac{\omega}{2\pi i c_0}\right)^{\frac{d-3}{2}}
\int_{\Omega}e^{-i\frac{\omega}{c_{0}}\frac{\mathbf{x}\cdot\mathbf{y}}{\left|\mathbf{x}\right|}}F_{1}\left(\mathbf{y}\right)d\mathbf{y}.\label{eq:U_0_def}
	\end{equation}
{Furthermore, for $\left|\vx\right|\gg1$,}
\begin{align}\label{eq:U_U0}
U\left(\vx\right)-U_0\left(\vx\right)=&\frac{e^{i\frac{\omega}{c_{0}}\left|\mathbf{x}\right|}}{4\pi\left|\mathbf{x}\right|^{\frac{d+1}{2}}}\left(\frac{\omega}{2\pi i c_0}\right)^{\frac{d-3}{2}}
\int_{\Omega}e^{-i\frac{\omega}{c_{0}}\frac{\mathbf{x}\cdot\mathbf{y}}{\left|\mathbf{x}\right|}}\left[\left(d-3\right)\left(d-1\right)\frac{ic_0}{8\omega}\right.\\
&\left.+\frac{d-1}{2}\frac{\vx\cdot\vy}{\left|\vx\right|}+\frac{i\omega}{2c_0}\left(\left|\vy\right|^2-\left(\frac{\vx\cdot\vy}{\left|\vx\right|}\right)^2\right)\right]F_{1}\left(\mathbf{y}\right)d\mathbf{y}+\mathcal{O}\left(\frac{1}{\left|\vx\right|^{\frac{d+3}{2}}}\right)\nonumber,
\end{align}
\begin{align}\label{eq:U_U0_der}
\partial_{\left|\vx\right|}\left[U\left(\vx\right)-U_0\left(\vx\right)\right]=&-\frac{e^{i\frac{\omega}{c_{0}}\left|\mathbf{x}\right|}}{4\pi\left|\mathbf{x}\right|^{\frac{d+1}{2}}}\left(\frac{\omega}{2\pi i c_0}\right)^{\frac{d-3}{2}}
\int_{\Omega}e^{-i\frac{\omega}{c_{0}}\frac{\mathbf{x}\cdot\mathbf{y}}{\left|\mathbf{x}\right|}}\left[\frac{\left(d-3\right)\left(d-1\right)}{8}\right.\\
&\left.-\frac{d-1}{2}\frac{i\omega}{c_0}\frac{\vx\cdot\vy}{\left|\vx\right|}+\frac{\omega^2}{2c_0^2}\left(\left|\vy\right|^2-\left(\frac{\vx\cdot\vy}{\left|\vx\right|}\right)^2\right)\right]F_{1}\left(\mathbf{y}\right)d\mathbf{y}\nonumber\\
&+\mathcal{O}\left(\frac{1}{\left|\vx\right|^{\frac{d+3}{2}}}\right),
                                                                                \nonumber
\end{align}
{\begin{align}\label{eq:U_U_der}
\partial_{\left|\vx\right|}U\left(\mathbf{x}\right)-\frac{i\omega}{c_0}U\left(\vx\right)=&\frac{e^{i\frac{\omega}{c_{0}}\left|\mathbf{x}\right|}}{4\pi\left|\mathbf{x}\right|^{\frac{d+1}{2}}}\left(\frac{\omega}{2\pi i c_0}\right)^{\frac{d-3}{2}}
\frac{1-d}{2}\int_{\Omega}e^{-i\frac{\omega}{c_{0}}\frac{\mathbf{x}\cdot\mathbf{y}}{\left|\mathbf{x}\right|}}F_1\left(\vy\right)d\mathbf{y}+\mathcal{O}\left(\frac{1}{\left|\vx\right|^{\frac{d+3}{2}}}\right).
\end{align}}

\medskip
\underline{Decay of $W_1$.}
In order to apply Proposition \ref{prop:dec_free}, we need to check the regularity of the initial conditions of $W_1$. Since $U\in{H^2\left(\Omega\right)}$,  
we find that $W_1(.,0)$ and the first term of $\partial_t W_1(.,0)$ are in $H^2\left(\mathbb{R}^d\right)$, by recalling that $\supp \eta_1\subset \bar{\mathbb{B}}_{R}$. 
{The second term of~$\partial_t W_1(.,0)$ is {in~$C^\infty(\mR^d)$, } as the integral in~\eqref{eq:U_0_def}
is the Fourier transform ({more precisely}, its restriction to the unit sphere) of a compactly supported function; see the text below definition~\eqref{eq:constsantcoeff}.
}
Moreover, this integral is constant in the radial direction. 
{In addition to being smooth, the second term of $\partial_t W_1(.,0)$ has compact support since $\supp[\eta'_0(|\vx|)]\subset \bar{\mathbb{B}}_{R}\setminus \mathbb{B}_{R-\epsilon}$.}
{
We thus conclude that} $\partial_t W_1(.,0)$ is also {$H^2\left(\mathbb{R}^d\right)$}.

Since all initial data of $W_1$ are compactly supported, the growth estimate \eqref{eq:u0u1_loc_cond} clearly holds. Hence, Proposition \ref{prop:dec_free} applies to give
\begin{equation}\label{eq:W34_dec_estim}
\left\Vert W_1\left(\cdot,t\right)\right\Vert_{H^1\left(\Omega\right)}+\left\Vert \partial_t W_1\left(\cdot,t\right)\right\Vert_{L^2\left(\Omega\right)}\leq \frac{C}{\left(1+t^2\right)^{\frac{d-1}{2}}}, \hspace{1em} t\geq0,
\end{equation}
for some constant $C>0$ depending on $\Omega$.

\medskip
\underline{Decay of $W_2$.}
$W_2\left(\vx,t\right)$ is the unique solution to the constant-coefficient problem
\begin{equation}\label{eq:V12_pbm}
\begin{cases}
\partial_{t}^{2}W_2\left(\mathbf{x},t\right)-c_0^2\Delta W_2\left(\mathbf{x},t\right)=0, \qquad\mathbf{x}\in\mathbb{R}^d, \hspace{1em} t>0,\\
W_2\left(\mathbf{x},0\right)=\eta_0\left(\left|\vx\right|\right)\left(U_{0}\left(\mathbf{x}\right)-U\left(\mathbf{x}\right)\right),\qquad\mathbf{x}\in\mathbb{R}^d,\\
\partial_{t}W_2\left(\mathbf{x},0\right)=c_0 \eta_0\left(\left|\vx\right|\right)\left(\frac{i\omega}{c_0}U\left(\vx\right)-\partial_{\left|\vx\right|}U_{0}\left(\mathbf{x}\right)\right),\quad \mathbf{x}\in\mathbb{R}^d.
\end{cases}
\end{equation}
{Note that even though $U\in H^2\left(\Omega\right)$, it follows from the smoothness of the kernel function in~\eqref{eq:U_int_repr} that $U$ is {arbitrarily} smooth in $\mathbb{R}^d\backslash\bar{\Omega}$. Hence, recalling that $\eta_0$ is zero in $\Omega$, 
and that $U_0$ is also smooth {away from $\vx=0$}, we deduce that $W_2\left(\cdot,0\right)$, $\partial_t W_2\left(\cdot,0\right)\in C^\infty\left(\mathbb{R}^{d}\right)$.}
Moreover, since 
\begin{equation*}
\frac{i\omega}{c_0}U\left(\vx\right)-\partial_{\left|\vx\right|}U_{0}\left(\mathbf{x}\right)=\partial_{\left|\vx\right|}\left[U\left(\mathbf{x}\right)-U_0\left(\mathbf{x}\right)\right]-\left(\partial_{\left|\vx\right|}U\left(\mathbf{x}\right)-\frac{i\omega}{c_0}U\left(\vx\right)\right),
\end{equation*}
we see from \eqref{eq:U_U0}--\eqref{eq:U_U_der}
that the initial conditions of \eqref{eq:V12_pbm} satisfy the assumptions of Lemma~\ref{lem:gen_dec} with
\begin{equation*}
  A_0\left(\left|\vx\right|\right):=\eta_0\left(\left|\vx\right|\right)
  \frac{e^{i\frac{\omega}{c_0}\left|\vx\right|}}{\left|\vx\right|^{\frac{d+1}{2}}},
\end{equation*}
\begin{align*}
Y_0\left(\frac{\vx}{\left|\vx\right|}\right):=&-\frac{1}{4\pi}\left(\frac{\omega}{2\pi i c_0}\right)^{\frac{d-3}{2}}
\int_{\Omega}e^{-i\frac{\omega}{c_{0}}\frac{\mathbf{x}\cdot\mathbf{y}}{\left|\mathbf{x}\right|}}\left[\left(d-3\right)\left(d-1\right)\frac{ic_0}{8\omega}\right.\\
&\left.+\frac{d-1}{2}\frac{\vx\cdot\vy}{\left|\vx\right|}+\frac{i\omega}{2c_0}\left(\left|\vy\right|^2-\left(\frac{\vx\cdot\vy}{\left|\vx\right|}\right)^2\right)\right]F_{1}\left(\mathbf{y}\right)d\mathbf{y}\nonumber,
\end{align*}
\begin{align*}
Y_1\left(\frac{\vx}{\left|\vx\right|}\right):=&-\frac{c_0}{4\pi}\left(\frac{\omega}{2\pi i c_0}\right)^{\frac{d-3}{2}}
\int_{\Omega}e^{-i\frac{\omega}{c_{0}}\frac{\mathbf{x}\cdot\mathbf{y}}{\left|\mathbf{x}\right|}}\left[\frac{\left(d-7\right)\left(d-1\right)}{8}\right.\\
&\left.-\frac{d-1}{2}\frac{i\omega}{c_0}\frac{\vx\cdot\vy}{\left|\vx\right|}+\frac{\omega^2}{2c_0^2}\left(\left|\vy\right|^2-\left(\frac{\vx\cdot\vy}{\left|\vx\right|}\right)^2\right)\right]F_{1}\left(\mathbf{y}\right)d\mathbf{y}\nonumber,
\end{align*}
and 
{$V_0$, $V_1\in C_b^5(\mR^d)$ for $d=2,\,3$. Moreover, the assumptions \eqref{eq:V0V1_dec} and \eqref{eq:assumption3D} on the initial conditions can be verified using \eqref{eq:U_U0}--\eqref{eq:U_U_der}. 
Thus Lemma~\ref{lem:gen_dec} } 
entails that {the solution $W_2\in C^5\left(\mathbb{R}^{d}\times\mathbb{R}_{+}\right)$ obeys} the following decay estimates uniformly in $\vx\in\Omega$ for $t\geq0$:
\begin{align}\label{eq:V12_dec_estim}
\left|W_2\left(\mathbf{x},t\right)\right|+\left|\nabla W_2\left(\mathbf{x},t\right)\right|+\left|\partial_t W_2\left(\mathbf{x},t\right)\right|&\\
+\left|\Delta W_2\left(\mathbf{x},t\right)\right|+\left|\partial_{t}\nabla W_2\left(\mathbf{x},t\right)\right|+\left|\partial_{t}\Delta W_2\left(\mathbf{x},t\right)\right|&\leq \frac{C}{\left(1+t^2\right)^{1/2}}\nonumber
\end{align}
with some constant $C>0$. In particular, \eqref{eq:V12_dec_estim} implies
\begin{equation}\label{eq:V12_dec_estim2}
\left\Vert W_2\left(\cdot,t\right)\right\Vert_{H^1\left(\Omega\right)}+\left\Vert \partial_t W_2\left(\cdot,t\right)\right\Vert_{L^2\left(\Omega\right)}\leq \frac{C}{\left(1+t^2\right)^{1/2}}, \hspace{1em} t\geq0.
\end{equation}

\medskip
\underline{Decay of $W_3$.}
$W_3\left(\vx,t\right)$ is the unique solution to the constant-coefficient problem
	\begin{equation}
	\begin{cases}
	\partial_{t}^{2}W_3\left(\mathbf{x},t\right)-c_0^2\Delta W_3\left(\mathbf{x},t\right)=0, \qquad\mathbf{x}\in\mathbb{R}^d, \hspace{1em} t>0,\label{eq:V_pbm}\\
	W_3\left(\mathbf{x},0\right)=-\eta_0\left(\left|\vx\right|\right)U_{0}\left(\mathbf{x}\right),\hspace{1em}\partial_{t}W_3\left(\mathbf{x},0\right)=c_0 \partial_{\left|\vx\right|}\big(\eta_0\left(\left|\vx\right|\right)U_{0}\left(\mathbf{x}\right)\big),\quad \mathbf{x}\in\mathbb{R}^d.
	\end{cases}
      \end{equation}
{As for $W_2$, the initial conditions satisfy $W_3\left(\cdot,0\right)$, $\partial_t W_3\left(\cdot,0\right)\in C^\infty(\mR^d)$. } 
Moreover, by setting
	\[
	A\left(\left|\vx\right|\right):=\eta_0\left(\left|\vx\right|\right)\frac{e^{i\frac{\omega}{c_{0}}\left|\mathbf{x}\right|}}{\left|\mathbf{x}\right|^{\frac{d-1}{2}}},\hspace{1.5em}
        Y\left(\frac{\vx}{\left|\vx\right|}\right):=
{-\frac{1}{4\pi}\left(\frac{\omega}{2\pi i c_0}\right)^{\frac{d-3}{2}}}
\int_{\Omega}e^{-i\frac{\omega}{c_{0}}\frac{\mathbf{x}\cdot \mathbf{y}}{\left|\mathbf{x}\right|}}F_{1}\left(\mathbf{y}\right)d\mathbf{y},
	\]
it is easy to see that \eqref{eq:V_pbm} satisfies the assumptions of Lemma~\ref{lem:spec_dec}.
Therefore, the {solution  $W_3\in C^6\left(\mathbb{R}^{d}\times\mathbb{R}_{+}\right)$ obeys the} following decay estimate, valid uniformly in $\vx\in\Omega$ for $t\geq0$:
\begin{align}\label{eq:V0_dec_estim}
\left|W_3\left(\mathbf{x},t\right)\right|+\left|\nabla W_3\left(\mathbf{x},t\right)\right|+\left|\partial_t W_3\left(\mathbf{x},t\right)\right|&\\
+\left|\Delta W_3\left(\mathbf{x},t\right)\right|+\left|\partial_{t}\nabla W_3\left(\mathbf{x},t\right)\right|+\left|\partial_{t}\Delta W_3\left(\mathbf{x},t\right)\right|&\leq \frac{C}{\left(1+t^2\right)^{1/2}},\nonumber
\end{align}
with some constant $C>0$.
In particular,
\begin{equation}\label{eq:V0_dec_estim2}
\left\Vert W_3\left(\cdot,t\right)\right\Vert_{H^1\left(\Omega\right)}+\left\Vert \partial_t W_3\left(\cdot,t\right)\right\Vert_{L^2\left(\Omega\right)}\leq \frac{C}{\left(1+t^2\right)^{1/2}}, \hspace{1em} t\geq0.
\end{equation}

\medskip
\underline{Decay of $W_4$.}
$W_4$ solves the inhomogeneous wave problem
	\begin{equation}\label{eq:wave_Z12}
	\begin{cases}
	\partial_{t}^{2}W_4\left(\mathbf{x},t\right)-\beta^{-1}\left(\mathbf{x}\right)\nabla\cdot\left(\alpha\left(\mathbf{x}\right)\nabla W_4\left(\mathbf{x},t\right)\right)=F_{2}\left(\mathbf{x},t\right)+F_{3}\left(\mathbf{x},t\right), \qquad\mathbf{x}\in\mathbb{R}^d, \hspace{1em} t>0,\\
	W_4\left(\mathbf{x},0\right)=0,\hspace{1em}\partial_{t}W_4\left(\mathbf{x},0\right)=0,\quad \mathbf{x}\in\mathbb{R}^d,
	\end{cases}
	\end{equation}
	where
	\begin{equation}
	F_{k}\left(\mathbf{x},t\right):=\beta^{-1}\left(\mathbf{x}\right)\nabla
        \alpha\left(\mathbf{x}\right)\cdot\nabla
        W_k\left(\mathbf{x},t\right)+\left(\beta^{-1}\left(\mathbf{x}\right)\alpha \left(\mathbf{x}\right)-c_0^2\right) \Delta W_k\left(\mathbf{x},t\right),\,k=2,3.
        \label{eq:F12_def}
	\end{equation}

{As already pointed out, the nonzero right-hand side compensates the fact that the equations in problems \eqref{eq:V12_pbm} and \eqref{eq:V_pbm} are different from the equation in problem \eqref{eq:wave_W}.}

        Estimates \eqref{eq:V12_dec_estim} and \eqref{eq:V0_dec_estim} entail the decay of {all
        the terms} 
      entering \eqref{eq:F12_def} and {of their} 
      time derivative. {Hence, recalling the regularity of $W_2$ and $W_3$, we see} that Proposition~\ref{prop:dec_source} is
      applicable {with $p=1$}. This gives {the unique solution $W_4\in C^{2}\left(\mathbb{R}_{+},L^2\left(\Omega\right)\right)\cap C^{1}\left(\mathbb{R}_{+},H^1\left(\Omega\right)\right)\cap C\left(\mathbb{R}_{+},H^2\left(\Omega\right)\right)$ which satisties}\\
for $d=2$:
\begin{equation}\label{eq:Z12_dec_estim_2D}
\left\Vert W_4\left(\cdot,t\right)\right\Vert_{H^1\left(\Omega\right)}+\left\Vert \partial_t W_4\left(\cdot,t\right)\right\Vert_{L^2\left(\Omega\right)}\leq C\frac{1+\log\left(1+t^2\right)}{\left(1+t^2\right)^{1/2}}, \hspace{1em} t\geq0,
\end{equation}
for $d=3$:
\begin{equation}\label{eq:Z12_dec_estim_3D}
\left\Vert W_4\left(\cdot,t\right)\right\Vert_{H^1\left(\Omega\right)}+\left\Vert \partial_t W_4\left(\cdot,t\right)\right\Vert_{L^2\left(\Omega\right)}\leq \frac{C}{\left(1+t^2\right)^{1/2}}, \hspace{1em} t\geq0,
\end{equation}
with some constant $C>0$.

Consequently, by combining \eqref{eq:W34_dec_estim}, \eqref{eq:V12_dec_estim2}, \eqref{eq:V0_dec_estim2}, 
\eqref{eq:Z12_dec_estim_2D}, and~\eqref{eq:Z12_dec_estim_3D}
with~\eqref{eq:W_decomp}, the 
estimates~\eqref{eq:W_dec_estim_2D} and~\eqref{eq:W_dec_estim_3D} follow. This concludes the proof of Theorem~\ref{thm:LAP}.

\section{Proofs of the auxiliary time decay results}
\label{sec:proofs}

\subsection{Proof of Proposition~\ref{prop:dec_free}}\label{sec:4.1}
	This proof is based on an application and an extension of a
        result from~\cite{Boucl-Burq}. We shall focus here only on
          the decay of the solution of~\eqref{eq:wave} with $f\equiv 0$. The existence, uniqueness and regularity results are standard. 	
Indeed, whenever $u_0\in H^{s+1}\left(\mathbb{R}^d\right)$ and $u_1\in
          H^{s}\left(\mathbb{R}^d\right)$ for $s\geq 1$,
a direct application of the result from
\cite[Ch. 6, Thm. 4.9]{Chaz-book} with $f\equiv 0$, together with a bootstrap
  argument for $\partial_t^2 u$, implies
that $u\in C^2\left(\mathbb{R}_{+},H^{s-1}\left(\mathbb{R}^d\right)\right)\cap C^1\left(\mathbb{R}_{+},H^{s}\left(\mathbb{R}^d\right)\right)\cap C\left(\mathbb{R}_{+},H^{s+1}\left(\mathbb{R}^d\right)\right)$ and 
\begin{equation}\label{eq:wave_eq_WP}
\left\Vert u\left(\cdot,t\right)\right\Vert _{H^{s+1}\left(\mathbb{R}^{d}\right)}^{2}+\left\Vert \partial_{t}u\left(\cdot,t\right)\right\Vert _{H^{s}\left(\mathbb{R}^{d}\right)}^{2}+\left\Vert \partial_{t}^{2}u\left(\cdot,t\right)\right\Vert _{H^{s-1}\left(\mathbb{R}^{d}\right)}^{2}\leq C\left(\left\Vert u_{0}\right\Vert _{H^{s+1}\left(\mathbb{R}^{d}\right)}^{2}+\left\Vert u_{1}\right\Vert _{H^{s}\left(\mathbb{R}^{d}\right)}^{2}\right)
\end{equation}	
for any $t>0$ and some constant $C>0$ that is uniform on any time interval $[0,T]$, $T>0$. 
In the present case, since $u_0$,
$u_1\in H^2\left(\mathbb{R}^{d}\right)$, we have~\eqref{eq:wave_eq_WP} with $s=1$. 

Because of {Assumption~\ref{assm:alph_bet_base} on
          $\alpha$ and $\beta$ (positivity and regularity),}
the operator
$P:=-\beta^{-1}\left(\vx\right)\nabla\cdot\left(\alpha\left(\vx\right)\nabla\,\right)$
 {with the domain $\text{Dom } P=H^2\left(\mathbb{R}^d\right)$}
        is self-adjoint in
        $L^2_\beta\left(\mathbb{R}^d\right)$ (the
        $L^2\left(\mathbb{R}^d\right)$ space endowed with the 
        $\beta$-weighted
        $L^2$ inner product).  Note that the sets $L^2_\beta(\mR^d)$ and $L^2(\mR^d)$ coincide since the weight $\beta$ is bounded and uniformly bounded away from zero.
        Moreover, $P$ is positive so that {there exists a unique
          self-adjoint, positive operator $B$ { with $\text{Dom } B=H^1\left(\mathbb{R}^d\right)$} such that $B^2=P$. {We refer to, e.g. \cite[Proof of Prop.~1.1]{paper-decay1D} for a more detailed discussion for the case $d=1$.}
        With the notation $\sqrt{P}:=B$ and $1/\sqrt{P}:=B^{-1}$,}
        we can formally write the solution of~\eqref{eq:wave} with $f\equiv 0$ as
	\begin{equation}\label{eq:wave_inv}
	u\left(\vx,t\right)=\cos\left(t\sqrt{P}\right)u_0\left(\vx\right)+\frac{\sin\left(t\sqrt{P}\right)}{\sqrt{P}}u_1\left(\vx\right), \hspace{1em} t\geq 0.
	\end{equation}
	Under {Assumptions~\ref{assm:alph_bet_base}
          and~\ref{assm:alph_bet_trap}
        on $\alpha$ and $\beta$ (compactly supported derivatives and nontrapping)},
the following operator-norm estimates are obtained in \cite[Thm. 1.5]{Boucl-Burq}. Namely, there exists a constant $C>0$ such that
	\begin{equation}\label{eq:wave_inv_estim1}
	\left\Vert q_\nu^{-1} \frac{\sin\left(t\sqrt{P}\right)}{\sqrt{P}} q_\nu^{-1} \right\Vert_{L^2 \left(\mathbb{R}^d\right)\rightarrow H^1\left(\mathbb{R}^d\right)}\leq \frac{C}{\left(1+t^2\right)^{\frac{d-1}{2}}}, \hspace{1em} t\geq 0,
	\end{equation}
	\begin{equation}\label{eq:wave_inv_estim2}
	 \left\Vert q_\nu^{-1} \cos\left(t\sqrt{P}\right) q_\nu^{-1} \right\Vert_{L^2 \left(\mathbb{R}^d\right)\rightarrow L^2 \left(\mathbb{R}^d\right)}\leq \frac{C}{\left(1+t^2\right)^\frac{d}{2}}, \hspace{1em} t\geq 0,
	\end{equation}
	where $q_\nu:=\left(1+\left|\vx\right|^2\right)^{\nu/2}$ with some $\nu>d+1$. 
	
Set $\mu:=d+1+\epsilon$. According to \eqref{eq:u0u1_loc_cond}, we
have $q_\mu u_0 
$ and $q_\mu u_1\in L^2\left(\mathbb{R}^d\right)$.
Then, we deduce from \eqref{eq:wave_inv}--\eqref{eq:wave_inv_estim2} that, for $t\geq 0$,
\begin{align}\label{eq:u_q_L2_estim}
\left\Vert q_{\mu}^{-1} u \left(\cdot,t\right)\right\Vert_{L^2\left(\mathbb{R}^d\right)}\leq& C\left(\frac{1}{\left(1+t^2\right)^\frac{d}{2}}\left\Vert q_\mu u_0\right\Vert_{L^2\left(\mathbb{R}^d\right)}+\frac{1}{\left(1+t^2\right)^\frac{d-1}{2}}\left\Vert q_\mu u_1\right\Vert_{L^2\left(\mathbb{R}^d\right)}\right)\\
\leq& \frac{C_0}{\left(1+t^2\right)^\frac{d-1}{2}}\nonumber
\end{align}
for some constant $C_0>0$. 
	
To obtain the estimate for the time derivative $\partial_t u$, we note that $w:=\partial_t u$ solves
$\partial_t^2 w+Pw=0$,
$w\left(\vx,0\right)=u_1\left(\vx\right)$, $\partial_t w\left(\vx,0\right)=-Pu_0\left(\vx\right)$. Hence, we have
\begin{equation*}
w\left(\cdot,t\right)=\cos\left(t\sqrt{P}\right)u_1-\frac{\sin\left(t\sqrt{P}\right)}{\sqrt{P}}\left(P u_0\right).
\end{equation*}
Therefore, using \eqref{eq:wave_inv_estim1} and~\eqref{eq:wave_inv_estim2}, 
we estimate, for $t\geq0$,
\begin{align}\label{eq:u_dt_q_L2_estim}
\left\Vert q_{\mu}^{-1}  \partial_t u \left(\cdot,t\right)\right\Vert_{L^2\left(\mathbb{R}^d\right)}\leq& C\left(\frac{1}{\left(1+t^2\right)^\frac{d}{2}}\left\Vert q_\mu u_1\right\Vert_{L^2\left(\mathbb{R}^d\right)}+\frac{1}{\left(1+t^2\right)^\frac{d-1}{2}}\left\Vert q_\mu Pu_0\right\Vert_{L^2\left(\mathbb{R}^d\right)}\right)\\
\leq& \frac{C_1}{\left(1+t^2\right)^\frac{d-1}{2}}\nonumber
\end{align}
for some constant $C_1>0$.

To {complete} the $H^1$-estimate of $u$, we estimate the $L^2$-norm of $\nabla u$. 
First, we observe that $\widetilde{w}:=\partial_t^2 u$ solves
$\partial_t^2 \widetilde{w}+P\widetilde{w}=0$,
$\widetilde{w}\left(\vx,0\right)=-Pu_0\left(\vx\right)$, $\partial_t \widetilde{w}\left(\vx,0\right)=-Pu_1\left(\vx\right)$. Hence, as before, we have, for $t\geq 0$,
\begin{equation*}
\widetilde{w}\left(\cdot,t\right)=-\cos\left(t\sqrt{P}\right)\left(P u_0\right)-\frac{\sin\left(t\sqrt{P}\right)}{\sqrt{P}}\left(P u_1\right),
\end{equation*}
\begin{equation*}
\left\Vert q_{\mu}^{-1} \partial_t^2 u \left(\cdot,t\right)\right\Vert_{L^2\left(\mathbb{R}^d\right)}\leq  C\left(\frac{1}{\left(1+t^2\right)^\frac{d}{2}}\left\Vert q_\mu P u_0\right\Vert_{L^2\left(\mathbb{R}^d\right)}+\frac{1}{\left(1+t^2\right)^\frac{d-1}{2}}\left\Vert q_\mu Pu_1\right\Vert_{L^2\left(\mathbb{R}^d\right)}\right).
\end{equation*}
We thus arrive at
\begin{equation}\label{eq:Pu_q_L2_estim}
\left\Vert q_{\mu}^{-1} P u \left(\cdot,t\right)\right\Vert_{L^2\left(\mathbb{R}^d\right)}=\left\Vert q_{\mu}^{-1} \partial_t^2 u \left(\cdot,t\right)\right\Vert_{L^2\left(\mathbb{R}^d\right)}\leq \frac{C_2}{\left(1+t^2\right)^\frac{d-1}{2}}
\end{equation}
for some constant $C_2>0$. Employing the notation $\overline{\left(\,\cdot\,\right)}$ for the complex conjugate, we consider the following inner product on $L^2_\beta\left(\mathbb{R}^d\right)${:
\begin{equation}\label{eq:Pu_u}
\left\langle q_\mu^{-1} Pu, q_\mu^{-1}u\right\rangle _{L_{\beta}^{2}\left(\mathbb{R}^{d}\right)} 
= -\int_{\mathbb{R}^{d}}\nabla\cdot\Big(\alpha\left(\vx\right)\nabla u\left(\vx,t\right)\Big)\overline{u\left(\vx,t\right)}q_\mu^{-2}\left(\vx\right)d\vx.
\end{equation}
}
{In order to elaborate this expression further, we resort to an approximation argument. Let $t>0$ be fixed. By density of $\mathcal C_0^\infty(\mR^d)$ in
  $H^2(\mR^d)$, for any $n\in\mathbb{N}$, there exists
  $u_n\left(\cdot,t\right)$ such that $\left\Vert
    u\left(\cdot,t\right)-u_n\left(\cdot,t\right)\right\Vert_{H^2(\mR^d)}\le
  1/n$. For $u_n\left(\cdot,t\right)\in \mathcal C_0^\infty(\mR^d)$,
  the divergence theorem gives
%
\[
\int_{\mathbb{R}^{d}}\nabla\cdot\left(\alpha\left(\vx\right)\nabla u_{n}\left(\vx,t\right)\right)\overline{u\left(\vx,t\right)}q_{\mu}^{-2}\left(\vx\right)d\vx=-\int_{\mathbb{R}^{d}}\alpha\left(\vx\right)\nabla u_{n}\left(\vx,t\right)\cdot\nabla\left(\overline{u\left(\vx,t\right)}q_{\mu}^{-2}\left(\vx\right)\right)d\vx.
\]
Therefore, adding and subtracting $u_n\left(\cdot,t\right)$ to $u\left(\cdot,t\right)$ under the divergence sign in the right-hand side of~\eqref{eq:Pu_u} gives
\begin{align*}
\left\langle q_{\mu}^{-1}Pu,q_{\mu}^{-1}u\right\rangle
  _{L_{\beta}^{2}\left(\mathbb{R}^{d}\right)}=
&
                                                 -\int_{\mathbb{R}^{d}}\nabla\cdot\left(\alpha\left(\vx\right)\nabla\left(u\left(\vx,t\right)-u_{n}\left(\vx,t\right)\right)\right)\overline{u\left(\vx,t\right)}q_{\mu}^{-2}\left(\vx\right)d\vx\\
& -\int_{\mathbb{R}^{d}}\alpha\left(\vx\right)\nabla\left(u\left(\vx,t\right)-u_{n}\left(\vx,t\right)\right)\cdot\nabla\left(\overline{u\left(\vx,t\right)}q_{\mu}^{-2}\left(\vx\right)\right)d\vx\\                                                 
& +\int_{\mathbb{R}^{d}}\alpha\left(\vx\right)\nabla u\left(\vx,t\right)\cdot\nabla\left(\overline{u\left(\vx,t\right)}q_{\mu}^{-2}\left(\vx\right)\right)d\vx.\\
\end{align*}
The absolute values of the terms on the first and second lines are estimated by multiples of
$\left\Vert
  u\left(\cdot,t\right)-u_n\left(\cdot,t\right)\right\Vert_{H^2(\mR^d)}\left\Vert
  u\left(\cdot,t\right)\right\Vert_{H^1(\mR^d)}$. Therefore, 
they vanish in the limit as $n\rightarrow\infty$. Consequently, we obtain
\begin{align*}
\left\langle q_{\mu}^{-1}Pu,q_{\mu}^{-1}u\right\rangle _{L_{\beta}^{2}\left(\mathbb{R}^{d}\right)}= & \int_{\mathbb{R}^{d}}\alpha\left(\vx\right)\left|\nabla u\left(\vx,t\right)\right|^{2}q_{\mu}^{-2}\left(\vx\right)d\vx\\
 & +\int_{\mathbb{R}^{d}}\alpha\left(\vx\right)\overline{u\left(\vx,t\right)}\nabla u\left(\vx,t\right)\cdot\nabla q_{\mu}^{-2}\left(\vx\right)d\vx.
\end{align*}
}
{Rearranging the terms and employing}
\[
\left(\nabla u\left(\vx,t\right)\cdot\nabla
  q_{\mu}^{-2}\left(\vx\right)\right)=-2\mu\frac{\left|\vx\right|}{1+\left|\vx\right|^{2}}
{q_{\mu}^{-2}} 
\left(\vx\right)\partial_{\left|\vx\right|}u\left(\vx,t\right),
\]
{we arrive at}
\begin{align}\label{eq:Pu_u_estim1}
\alpha_{\min}\left\Vert q_{\mu}^{-1}\nabla u\right\Vert _{L^{2}\left(\mathbb{R}^{d}\right)}^{2} & \leq\int_{\mathbb{R}^{d}}\alpha\left(\vx\right)\left|\nabla u\left(\vx,t\right)\right|^{2}q_{\mu}^{-2}\left(\vx\right)d\vx\\
 & \leq\left|\left\langle q_{\mu}^{-1}Pu,q_{\mu}^{-1}u\right\rangle
   _{L_{\beta}^{2}\left(\mathbb{R}^{d}\right)}\right|+
   \mu\left\Vert \alpha\right\Vert _{L^{\infty}\left(\mathbb{R}^{d}\right)}\left|\left\langle q_{\mu}^{-1}\partial_{\left|\vx\right|}u,q_{\mu}^{-1}u\right\rangle _{L^{2}\left(\mathbb{R}^{d}\right)}\right|.\nonumber
\end{align}
Furthermore, employing the Cauchy-Schwarz inequality, we can estimate 
\[
\left|\left\langle q_{\mu}^{-1}Pu,q_{\mu}^{-1}u\right\rangle _{L_{\beta}^{2}\left(\mathbb{R}^{d}\right)}\right|\leq\left\Vert \beta\right\Vert _{L^{\infty}\left(\mathbb{R}^{d}\right)}\left\Vert q_{\mu}^{-1}Pu\right\Vert _{L^{2}\left(\mathbb{R}^{d}\right)}\left\Vert q_{\mu}^{-1}u\right\Vert _{L^{2}\left(\mathbb{R}^{d}\right)},
\]
\begin{align*}
\left|\left\langle q_{\mu}^{-1}\partial_{\left|\vx\right|}u,q_{\mu}^{-1}u\right\rangle _{L^{2}\left(\mathbb{R}^{d}\right)}\right| & \leq\left\Vert q_{\mu}^{-1}\nabla u\right\Vert _{L^{2}\left(\mathbb{R}^{d}\right)}\left\Vert q_{\mu}^{-1}u\right\Vert _{L^{2}\left(\mathbb{R}^{d}\right)}\\
& \leq\frac{\alpha_{\min}}{4\mu\left\Vert \alpha\right\Vert _{L^{\infty}\left(\mathbb{R}^{d}\right)}}\left\Vert q_{\mu}^{-1}\nabla u\right\Vert _{L^{2}\left(\mathbb{R}^{d}\right)}^{2}+\frac{\mu\left\Vert \alpha\right\Vert _{L^{\infty}\left(\mathbb{R}^{d}\right)}}{\alpha_{\min}}\left\Vert q_{\mu}^{-1}u\right\Vert _{L^{2}\left(\mathbb{R}^{d}\right)}^{2}.
\end{align*}
Here, on the second line, we used the elementary inequality
$|a|\,|b|\le \frac{{\delta_0}}{2} |a|^2+\frac{1}{2{\delta_0}}|b|^2$,
valid for any ${\delta_0}>0$.
Therefore, estimate \eqref{eq:Pu_u_estim1} entails
\begin{align*}
 {\frac34\, \alpha_{\min}} 
  \left\Vert q_{\mu}^{-1}\nabla u\right\Vert _{L^{2}\left(\mathbb{R}^{d}\right)}^{2}\leq&\left\Vert \beta\right\Vert _{L^{\infty}\left(\mathbb{R}^{d}\right)}\left\Vert q_{\mu}^{-1}Pu\right\Vert _{L^{2}\left(\mathbb{R}^{d}\right)}\left\Vert q_{\mu}^{-1}u\right\Vert _{L^{2}\left(\mathbb{R}^{d}\right)}\\
&+{\frac{\mu^{2}}{\alpha_{\min}}} 
\left\Vert\alpha\right\Vert _{L^{\infty}\left(\mathbb{R}^{d}\right)}^{2}\left\Vert q_{\mu}^{-1}u\right\Vert _{L^{2}\left(\mathbb{R}^{d}\right)}^{2}\nonumber.
\end{align*}
Recalling \eqref{eq:u_q_L2_estim} and~\eqref{eq:Pu_q_L2_estim}, this leads to
\begin{equation}\label{eq:du_q_L2_estim}
\left\Vert q_{\mu}^{-1}\nabla u\right\Vert
_{L^{2}\left(\mathbb{R}^{d}\right)}^{2}\leq
{\frac43}\left(
  {\frac{C_{0}C_{2}}{\alpha_{\min}}} 
  \left\Vert \beta\right\Vert
  _{L^{\infty}\left(\mathbb{R}^{d}\right)}+
 {\frac{\mu^{2}C_{0}^{2}}{\alpha_{\min}^{2}}} 
  \left\Vert \alpha\right\Vert _{L^{\infty}\left(\mathbb{R}^{d}\right)}^{2}\right)\frac{1}{\left(1+t^{2}\right)^{d-1}}.
\end{equation}
Finally, denoting with $\chi_\Omega$ the characteristic function of the bounded set $\Omega$, we have 
\[
\left\Vert u\right\Vert _{L^{2}\left(\Omega\right)}=\left\Vert u\chi_{\Omega}\right\Vert _{L^{2}\left(\mathbb{R}^{d}\right)}\leq C_{\Omega,\mu}\left\Vert q_{\mu}^{-1}u\right\Vert _{L^{2}\left(\mathbb{R}^{d}\right)},
\]
and similarly for $\nabla u$ and $\partial_t u$. Hence, the estimates
\eqref{eq:u_q_L2_estim}, \eqref{eq:u_dt_q_L2_estim}, and
\eqref{eq:du_q_L2_estim} furnish~\eqref{eq:dec_IC_gen}.
\qed

\subsection{Proof of Proposition~\ref{prop:dec_source}}
The existence, uniqueness, and regularity results are standard. 
{Indeed, since $f\in C\left(\mathbb{R}_{+},H^1\left(\mathbb{R}^d\right)\right)$, the result from \cite[Ch. 6, Thm. 4.9]{Chaz-book} implies (directly, and by estimating $\partial_t^2 u$ from the wave equation \eqref{eq:wave}) that $u\in C^2\left(\mathbb{R}_{+},L^{2}\left(\mathbb{R}^d\right)\right)\cap C^1\left(\mathbb{R}_{+},H^{1}\left(\mathbb{R}^d\right)\right)\cap C\left(\mathbb{R}_{+},H^{2}\left(\mathbb{R}^d\right)\right)$.}
 We shall focus here only on the decay of the solution.
	Without loss of generality, we can take $\Omega=\Omega_f$ (by
        enlarging both sets if necessary).
{Let $P$, $\sqrt{P}$, and $1/\sqrt{P}$ be defined as at the
  beginning of Section~\ref{sec:4.1}.}
%
The following operator-norm estimate was obtained in~\cite[Thm. 1.5]{Boucl-Burq}:
	\begin{equation}\label{eq:wave_inv_estim0}
	\left\Vert \frac{\sin\left(t\sqrt{P}\right)}{\sqrt{P}}\chi_\Omega\right\Vert_{L^2 \left(\mathbb{R}^d\right)\rightarrow H^1\left(\Omega\right)}\leq \frac{C_0}{\left(1+t^2\right)^\frac{d-1}{2}},\hspace{1em} t\geq 0, 
	\end{equation}
for some $C_0>0$, where $\chi_\Omega$ denotes the characteristic function of the set $\Omega$.

According to the Duhamel principle, the solution to \eqref{eq:wave}
{with $u_0\equiv 0$, $u_1\equiv 0$} can be written as
\begin{equation}\label{eq:u_Duham}
u\left(\cdot,t\right)=\int_0^t \frac{\sin\left(\left(t-\tau\right)\sqrt{P}\right)}{\sqrt{P}} f\left(\cdot,\tau\right) d\tau.
\end{equation}
Using a basic Bochner integral estimate in $H^1\left(\Omega\right)$ and \eqref{eq:wave_inv_estim0}, we obtain, for $t>0$,
\begin{align}\label{eq:u_H1_estim1}
\left\Vert u\left(\cdot,t\right)\right\Vert _{H^{1}\left(\Omega\right)}&\leq\int_{0}^{t}\left\Vert \frac{\sin\left(\left(t-\tau\right)\sqrt{P}\right)}{\sqrt{P}}f\left(\cdot,\tau\right)\right\Vert _{H^{1}\left(\Omega\right)}d\tau\\
&\leq \int_{0}^{t} \frac{C_0}{\left(1+\left(t-\tau\right)^2\right)^{\frac{d-1}{2}}} \left\Vert f\left(\cdot,\tau\right)\right\Vert_{L^2\left(\Omega\right)} d\tau,\nonumber
\end{align}
where, in the second line, we also took into account the assumption that
the support of $f\left(\cdot,\tau\right)$ is contained in $\Omega$ for
each $\tau>0$.

Employing the assumed estimate{~\eqref{eq:assf} on $f$, namely} $\left\Vert f\left(\cdot,\tau\right)\right\Vert_{L^2\left(\Omega\right)}\leq {C_f}\slash\left(1+\tau^2\right)^{p/2}$ for some {constants $C_f,\,p>0$} and all $\tau>0$, and denoting $C:=C_0{C_f}$, we proceed to estimate
\begin{align}\label{eq:u_H1_estim2}
\left\Vert u\left(\cdot,t\right)\right\Vert _{H^{1}\left(\Omega\right)}\leq&\int_{0}^{t}\frac{C\,d\tilde{\tau}}{\left(1+\left(t-\tilde{\tau}\right)^2\right)^\frac{d-1}{2}\left(1+\tilde{\tau}^{2}\right)^{\frac{p}{2}}}\\
=&\frac{C}{t^{d+p-2}}\left[\int_{0}^{1/2}\frac{d\tau}{\left(1/t^2+\left({1}-\tau\right)^2\right)^\frac{d-1}{2}\left(1/t^2+\tau^{2}\right)^\frac{p}{2}}\right.\nonumber\\
&+\left.\int_{1/2}^{1}\frac{d\tau}{\left(1/t^2+\left({1}-\tau\right)^2\right)^\frac{d-1}{2}\left(1/t^2+\tau^{2}\right)^\frac{p}{2}}\right]\nonumber\\
\leq&\frac{2^{d-1}C}{t^{p-1}}\frac{1}{\left(1+t^2\right)^\frac{d-1}{2}}\int_{0}^{1/2}\frac{d\tau}{\left(1/t^2+\tau^{2}\right)^\frac{p}{2}}\nonumber\\
&+\frac{2^{p}C}{t^{d-2}}\frac{1}{\left(1+t^2\right)^\frac{p}{2}}\int_{0}^{1/2}\frac{d\tau}{\left(1/t^2+\tau^2\right)^\frac{d-1}{2}}\nonumber.
\end{align}
Here we used the change of variable $\tilde{\tau}\mapsto
\tau:=\tilde{\tau}/t$ and employed the estimates 
{
\begin{align*}
\frac{1/t^{d+p-2}}{\left(1/t^2+\left({1}-\tau\right)^2\right)^\frac{d-1}{2}}
  \le
  \frac{1/t^{d+p-2}}{(1/t^2+1/4) ^\frac{d-1}{2}}
  =\frac{2^{d-1}}{t^{p-1}}\frac{1}{(4+t^2) ^\frac{d-1}{2}}
  \le \frac{2^{d-1}}{t^{p-1}}\frac{1}{(1+t^2) ^\frac{d-1}{2}},\\
 \hfill   \hspace{2.5em}0\leq\tau\leq\frac{1}{2},\,t\geq0,
\end{align*}
\begin{align*}
  \frac{1/t^{d+p-2}}{\left(1/t^2+\tau^{2}\right)^\frac{p}{2}}
  \le
  \frac{1/t^{d+p-2}}{(1/t^2+1/4) ^\frac{p}{2}}
  =\frac{2^{p}}{t^{d-2}}\frac{1}{(4+t^2) ^\frac{p}{2}}
  \le \frac{2^{p}}{t^{d-2}}\frac{1}{(1+t^2) ^\frac{p}{2}},\\
  \hfill  \hspace{2.5em}\frac{1}{2}\leq\tau\leq 1,\,t\geq0,
\end{align*}
}
in the integrals over $\left[0,1/2\right]$ and $\left[1/2,1\right]$,
respectively. In the last line {of~\eqref{eq:u_H1_estim2},} we have also made the change of variable $\tau\mapsto 1-\tau$. 
Using Lemma \ref{lem:app_alg_int_estim}, we continue estimate \eqref{eq:u_H1_estim2}:
\begin{align}\label{eq:u_H1_estim3}
\left\Vert u\left(\cdot,t\right)\right\Vert _{H^{1}\left(\Omega\right)}\leq& \frac{2^q C}{\left(1+t^2\right)^{\frac{d-1}{2}}}\begin{cases}
C_{1,p}t^{1-p},& 0<p<1,\\
\log\left(t+\sqrt{1+t^2}\right),& p=1,\\
C_{2,p},& p>1,
\end{cases}\\
&+\frac{2^q C}{\left(1+t^2\right)^{\frac{p}{2}}}\begin{cases}
\log\left(t+\sqrt{1+t^2}\right),& d=2,\\
C_{2,{d-1}},& d>2 \nonumber,
\end{cases}
\end{align}
where $q:=\max\left(d-1,p\right)$, $C_{1,s}:=\dfrac{1}{1-s}$, $C_{2,s}:=\displaystyle{\int_0^\infty\frac{dz}{\left(1+z^2\right)^{s/2}}}$. 
We continue by considering separately the cases $d=2$ and $d>2$.

Since $C_{1,p}>1$, estimate \eqref{eq:u_H1_estim3} for $d=2$ reads
\begin{align}\label{eq:u_H1_estim4}
\left\Vert u\left(\cdot,t\right)\right\Vert _{H^{1}\left(\Omega\right)}\leq& 2^q C
\begin{cases}
C_{1,p}\left(1+C_{3,p}\right)\dfrac{1+\log(1+t^2)}{\left(1+t^2\right)^{\frac{p}{2}}},& 0<p<1,\\
2\dfrac{\log\left(t+\sqrt{1+t^2}\right)}{\left(1+t^2\right)^\frac{1}{2}},& p=1,\\
\left(1+C_{4,p}\right)\max\left(C_{2,p},1\right)\dfrac{1}{\left(1+t^2\right)^{\frac{1}{2}}},& p>1,
\end{cases}\\
\leq&\widetilde{C}_{p} \begin{cases}
\dfrac{1+\log(1+t^2)}{\left(1+t^2\right)^{\frac{p}{2}}},& 0<p\leq 1,\\
\dfrac{1}{\left(1+t^2\right)^{\frac{1}{2}}},& p>1,
\end{cases}\nonumber
\end{align}
where
  $C_{3,p}:=\displaystyle{\sup_{t\geq
        0}\,}\dfrac{t^{1-p}}{\left(1+t^2\right)^{\frac{1-p}{2}}[1+\log(1+t^2)]}$,
  $C_{4,p}:=\displaystyle{\sup_{t\geq
      0}\,}\dfrac{\log\left(t+\sqrt{1+t^2}\right)}{\left(1+t^2\right)^{\frac{p-1}{2}}}$.
In \eqref{eq:u_H1_estim4} we also used the elementary estimate
$$
  \log(t+\sqrt{1+t^2}) \le \log 2+\frac12\log(1+t^2) < 1 + \log(1+t^2),\quad t\ge0.
$$

Similarly, when $d>2$, we have
\begin{align}\label{eq:u_H1_estim5}
\left\Vert u\left(\cdot,t\right)\right\Vert _{H^{1}\left(\Omega\right)}\leq& 2^q C
\begin{cases}
\left(1+C_{5,d,p}\right)\max\left(C_{1,p},C_{2,d-1}\right)\dfrac{1}{\left(1+t^2\right)^{\frac{p}{2}}},& 0<p<1,\\
\left(1+C_{4,d-1}\right)\max\left(C_{2,d-1},1\right)\dfrac{1}{\left(1+t^2\right)^\frac{1}{2}},& p=1,\\
2C_{2,r}\dfrac{1}{\left(1+t^2\right)^{\frac{r}{2}}},& p>1,
\end{cases}\\
\leq&\widetilde{C}_{p,d} \begin{cases}
\dfrac{1}{\left(1+t^2\right)^{\frac{p}{2}}},& 0<p\leq 1,\\
\dfrac{1}{\left(1+t^2\right)^{\frac{r}{2}}},& p>1,
\end{cases}\nonumber
\end{align}
where $r:=\min\left(d-1,p\right)$, $C_{5,d,p}:=\displaystyle{\sup_{t\geq 0}\,}\dfrac{t^{1-p}}{\left(1+t^2\right)^{\frac{d-1-p}{2}}}$.
This completes the estimate of $\left\Vert u\left(\cdot,t\right)\right\Vert _{H^{1}\left(\Omega\right)}$.

To finish the proof, it remains to obtain the estimate for the time derivative $\partial_t u$. To this effect, we note that $w:=\partial_tu$ solves
$\partial_t^2 w+Pw=\partial_t f$,
$w\left(\vx,0\right)=0$, $\partial_t w\left(\vx,0\right)=f\left(\vx,0\right)$.
Hence, we have
\begin{equation*}\label{ut-H1}
  \partial_tu\left(\cdot,t\right)=
  w\left(\cdot,t\right)=\frac{\sin\left(t\sqrt{P}\right)}{\sqrt{P}}f\left(\cdot,0\right)+\int_0^t \frac{\sin\left(\left(t-\tau\right)\sqrt{P}\right)}{\sqrt{P}} \partial_t f\left(\cdot,\tau\right) d\tau,
\end{equation*}
and consequently we obtain from~\eqref{eq:wave_inv_estim0}, 
again with $C=C_0C_f$,
\begin{equation}\label{eq:u_dt_H1_estim1}
\left\Vert \partial_t u\left(\cdot,t\right)\right\Vert _{H^{1}\left(\Omega\right)}\leq\frac{C}{\left(1+t^2\right)^\frac{d-1}{2}}+\int_{0}^{t}\left\Vert \frac{\sin\left(\left(t-\tau\right)\sqrt{P}\right)}{\sqrt{P}}\partial_t f\left(\cdot,\tau\right)\right\Vert _{H^{1}\left(\Omega\right)}d\tau.
\end{equation}
Therefore, {owing to~\eqref{eq:assf},} the estimate for $\partial_t
u$ can be obtained from the estimates for $u$ given in~\eqref{eq:u_H1_estim4} and~\eqref{eq:u_H1_estim5} by only adding an extra term, which is the first term on the right-hand side of \eqref{eq:u_dt_H1_estim1}. Namely, we have, for $d=2$, 
\begin{align}\label{eq:u_dt_H1_estim2}
\left\Vert \partial_t u\left(\cdot,t\right)\right\Vert _{H^{1}\left(\Omega\right)}\leq&\begin{cases}
\max\left(C,\widetilde{C}_p\right)\left[\dfrac{1}{\left(1+t^2\right)^\frac{1}{2}}+\dfrac{1+\log(1+t^2)}{\left(1+t^2\right)^\frac{p}{2}}\right],& 0<p\leq1,\\
\left(C+\widetilde{C}_p\right)\dfrac{1}{\left(1+t^2\right)^\frac{1}{2}},& p>1,
\end{cases}\\
\leq&2\max\left(C,\widetilde{C}_p\right)\begin{cases}
\dfrac{1+\log\left(1+t^2\right)}{\left(1+t^2\right)^\frac{p}{2}},& 0<p\leq1,\\
\dfrac{1}{\left(1+t^2\right)^\frac{1}{2}},& p>1.
\end{cases}\nonumber
\end{align}
Since the first term of the right-hand side of \eqref{eq:u_dt_H1_estim1} decays at least as fast as the second, we have, for $d>2$, 
\begin{align}\label{eq:u_dt_H1_estim3}
\left\Vert \partial_t u\left(\cdot,t\right)\right\Vert _{H^{1}\left(\Omega\right)}\leq& \widehat{C}_{p,d}\begin{cases}
\dfrac{1}{\left(1+t^2\right)^\frac{p}{2}},& 0<p\leq1,\\
\dfrac{1}{\left(1+t^2\right)^\frac{r}{2}},& p>1.
\end{cases}
\end{align}

Altogether, when $d=2$, estimates \eqref{eq:u_dt_H1_estim2} and \eqref{eq:u_H1_estim4} imply \eqref{eq:dec_source_2D}. Analogously, for $d>2$, estimates \eqref{eq:u_dt_H1_estim3} and \eqref{eq:u_H1_estim5} furnish \eqref{eq:dec_source_gen}. 
\qed

\subsection{Proof of Lemma~\ref{lem:gen_dec}}
{Since $v_0\in C^6(\mR^d)$, $v_1\in C^5(\mR^d)$, }
Theorems 2 and 3 in \cite[Par. 2.4.1]{Evans} {applied to $u$ and its derivatives} {(see also \eqref{eq:v_sol_v0v1} and \eqref{eq:v_sol_Kirch_2})}
  imply that {the regularity of} the solution~$v$ to~\eqref{eq:wave_c0} is 
 {$v\in C^{5}\left(\mR^d\times\mR_+\right)$\, and 
  $\partial_t v\in C^{4}\left(\mR^d\times\mR_+\right)$}.

In the main body of the proof, we shall prove that the bound
\begin{equation}\label{eq:v_dec2show}
\left|v\left(\vx,t\right)\right|\leq \frac{C}{\left(1+t^2\right)^{1/2}}, \hspace{1em}\vx\in\Omega,  \hspace{1em}t\geq 0,
\end{equation}
is valid for some constant $C>0$, assuming that $v_0\equiv 0$.

This first part of the proof actually holds true for weaker regularity assumptions than made in \eqref{eq:gen_ICs}-\eqref{eq:assumption3D}. More precisely, for $d=2$, we only need $A_0\in C^1(\mR_+)$, $Y_1\in C^1(\mathbb{S}^1)$, $V_1\in  C(\mR^2)$ with $|\vx|^{5/2}|V_1(\vx)|\le C$, $\vx\in\mR^2$. And, for $d=3$, we only need $v_1\in C(\mR^3)$ with $|\vx|^2|v_1(\vx)|\le C$, $\vx\in\mR^3$. In \eqref{eq:set-A} below, we shall summarize these reduced assumptions by saying that $v_1\in \mathcal A$.

{The case $v_0\not\equiv 0$ and the estimate of the other terms
  in~\eqref{eq:sol_gen_bnds} is discussed in the final part of this
  proof.} We consider {now} separately the cases $d=2$ and $d=3$. 

\medskip
\noindent
{\bf$\bullet$  Case $d=2$.}

The solution of \eqref{eq:wave_c0} with $v_0\equiv0$ is given by Poisson's formula \cite[Par. 2.4.1 (c)]{Evans}
\begin{equation}
\label{eq:v_sol_v1}
v\left(\vx,t\right)=\frac{t}{2\pi}\int_{0}^{1}\frac{r}{\left(1-r^{2}\right)^{1/2}}\int_{\left|\vs\right|=1}v_{1}\left(\vx+\vs rc_{0}t\right)d\sigma_{\vs}dr,
\end{equation}
where $d\sigma_{\vs}$ denotes the surface measure of the unit circle $\mathbb{S}^{1}$.
Introducing $\rho:=\left|\vx+\vs rc_{0}t\right|$, $\phi:=\frac{\vx+\vs rc_{0}t}{\left|\vx+\vs rc_{0}t\right|}$, and using \eqref{eq:gen_ICs}, we can write
\begin{align}\label{eq:v_sol_v1_decomp}
v\left(\vx,t\right)=&\frac{t}{2\pi}\int_{0}^{1}\frac{r}{\left(1-r^{2}\right)^{1/2}}\int_{\left|\vs\right|=1}\left[A_0(\rho)Y_1\left(\phi\right)+V_1\left(\vx+\vs rc_{0}t\right)\right]d\sigma_{\vs}dr\\
=:&P\left(\vx,t\right)+Q\left(\vx,t\right).\nonumber
\end{align}
{As $A_0\left(\rho\right)\equiv 0$ for $\rho\leq\rho_0$, then $\phi$ is well-defined whenever it appears {in the above integral}. In fact, $\rho=\left|\vx+\vs rc_{0}t\right|>\rho_0$ whenever the coefficient $A_0\left(\rho\right)$ in front of~$Y_1\left(\phi\right)$ in~\eqref{eq:v_sol_v1_decomp} is different from zero.}
%

We shall prove that there exists some $t_0>0$ such that the bounds
\begin{equation}\label{eq:boundQP}
    |P\left(\vx,t\right)|\le \frac{\widetilde{C}}{t},\qquad
    |Q\left(\vx,t\right)|\le \frac{\widetilde{C}_0}{t}    
\end{equation}
are valid uniformly in $\vx\in\Omega$ with some constants $\widetilde{C}$, $\widetilde{C}_0>0$  for any $t\ge t_0$.
Since it is evident from~\eqref{eq:v_sol_v1} that the solution $v$ is
bounded for any finite $t\geq 0$, \eqref{eq:v_sol_v1_decomp} and the estimates 
in~\eqref{eq:boundQP} will imply~\eqref{eq:v_dec2show}.

\bigskip 
\noindent
\underline{Estimate of $Q$ for $t\ge t_0$}:

We have
\begin{equation}\label{eq:Q_def}
Q\left(\vx,t\right)=\frac{t}{2\pi}\int_{a_1/t}^{1}\frac{r}{\left(1-r^{2}\right)^{1/2}}\int_{\left|\vs\right|=1}V_1\left(\vx+\vs rc_{0}t\right)d\sigma_{\vs}dr.
\end{equation}
Since $V_1\left(\vx\right)\equiv 0$ for $\left|\vx\right|\leq\rho_0$, we reduced here the integration range in the $r$ variable from $\left(0,1\right)$ to $\left(a_1/t,1\right)$ with
\begin{equation}\label{eq:a1_def}
a_1:=\frac{1}{c_0}\underset{\left|\vs\right|=1,\,\vx\in\Omega}{\text{inf }}\left[\sqrt{\left(\mathbf{x}\cdot\mathbf{s}\right)^{2}+\rho_{0}^{2}-\left|\vx\right|^{2}}-\mathbf{x}\cdot\mathbf{s}\right],
\end{equation}
{which is positive, due to $\Omega\Subset\mathbb{B}_{\rho_0}$.
To justify this reduction of the integration range, we need to prove the implication~$r<a_1/t\ \Rightarrow \, \rho<\rho_0$, from which~$V_1\equiv 0$ follows.} 
{The condition~$r<a_1/t$ means that for }{$\vx\in\Omega$, $\vs\in\mathbb{S}^1$, we have} 
$$
rc_0t+\vx\cdot\vs < \sqrt{\left(\mathbf{x}\cdot\mathbf{s}\right)^{2}+\rho_{0}^{2}-\left|\vx\right|^{2}}.
$$
For $rc_0t+\vx\cdot\vs\ge0$, this is equivalent to
$$
\rho^2=|\vx|^2+2\vx\cdot\vs rc_0t+(rc_0t)^2<\rho_0^2,
$$
and $rc_0t+\vx\cdot\vs<0$ yields directly
$$
  \rho^2=|\vx|^2+2\vx\cdot\vs rc_0t+(rc_0t)^2\le |\vx|^2-(rc_0t)^2\le |\vx|^2< \rho_0^2.
$$

Here and in the sequel we assume that $a_1/t\le1$, i.e. that $t_0\ge a_1$.
{By rearranging the factors, we can write
\begin{equation}\label{eq:auxQ}
    Q\left(\vx,t\right)=\frac{1}{2\pi}\int_{a_1/t}^{1}
    \frac{(rt)^{-3/2}}{\left(1-r^{2}\right)^{1/2}}
    \int_{\left|\vs\right|=1}
    \left(\frac{rt}{\rho}\right)^{5/2}\rho^{5/2}V_1\left(\vx+\vs rc_{0}t\right)d\sigma_{\vs}dr.
\end{equation}
{We can assume $\rho=\left|\vx+\vs rc_{0}\right|>\rho_0$ since the integrand vanishes otherwise due to the support of $V_1$.} Thus, we can estimate
  $rt/\rho$ in~\eqref{eq:auxQ} as follows. 
  From the triangle inequality
\begin{equation}\label{triangle}
    rc_0t=|\vx+\vs rc_{0}t-\vx|\le \rho+|\vx|\quad 
\end{equation}
and $\vx\in\Omega\Subset\mathbb{B}_{\rho_0}$, $\rho>\rho_0$, we have
    
\begin{equation}\label{eq:rt_rho_bnd}
  \dfrac{rt}{\rho}\le\dfrac{1}{c_0}\left(1+\dfrac{|\vx|}{\rho}\right)\le\dfrac{2}{c_0}.  
\end{equation}
Moreover, assumption~\eqref{eq:V0V1_dec} implies 
  $\rho^{5/2}|V_1\left(\vx+\vs rc_{0}t\right)|\le
  C_0$. Using this and~\eqref{eq:rt_rho_bnd}, \eqref{eq:auxQ} gives
  \[
    \left|Q\left(\vx,t\right)\right|\leq
    \frac{2^{5/2}C_0}{c_0^{5/2}t^{3/2}}
    \int_{a_1/t}^{1}\dfrac{dr}{\left(1-r^{2}\right)^{1/2}r^{3/2}}.
  \]
If we choose $t_0:=2a_1$, we obtain for $t\ge t_0$:
}
%
\begin{align}\label{eq:Q_estim1}
  \left|Q\left(\vx,t\right)\right|\leq&
{\frac{2^{5/2}C_0}{c_0^{5/2}t^{3/2}}}  
\int_{a_1/t}^{1}\frac{dr}{\left(1-r^{2}\right)^{1/2}r^{3/2}}=
{\frac{2^{5/2}C_0}{c_0^{5/2}t^{3/2}}}\left(\int_{a_1/t}^{1/2}\ldots+\int_{1/2}^1\ldots\right)\\
  \leq& {\frac{2^{7/2}C_0}{\sqrt{3}c_0^{5/2}t^{3/2}}}
        \int_{a_1/t}^{1/2}\frac{dr}{r^{3/2}}+
    {\frac{2^{5/2}C_0}{c_0^{5/2}t^{3/2}}}    
        \int_{1/2}^1\frac{dr}{\left(1-r^{2}\right)^{1/2}r^{3/2}}
\leq \frac{\widetilde{C}_0}{t}\nonumber
\end{align}
for some constant $\widetilde{C}_0>0$. This completes the proof
  of the estimate of $Q$ in~\eqref{eq:boundQP} with $t_0=2a_1$.

\bigskip 
\noindent\underline{{Estimate of $P$ for $t\ge t_0$}}:
	

In order to prove the estimate of $P$ in~\eqref{eq:boundQP},
let us write 
\begin{align}\label{eq:P_prelim}
  P\left(\mathbf{x},t\right){=}
  &{\frac{t}{2\pi}\int_{0}^{1}
    \frac{e^{ir\omega t}}{\left(1-r\right)^{1/2}}
    \int_{\left|\mathbf{s}\right|=1}
    \left[\dfrac{r e^{-ir\omega
          t}}{(1+r)^{1/2}}A_0\left(\rho\right)Y_1\left(\phi\right)\right.} \\
&  {\left. -\left(\dfrac{e^{-i\omega t}}{\sqrt{2}}-\dfrac{e^{-i\omega t}}{\sqrt{2}}\right)
          A_0\left(\left|\mathbf{x}+\mathbf{s}c_{0}t\right|\right)Y_1\left(\frac{\mathbf{x}+\mathbf{s}c_{0}t}{\left|\mathbf{x}+\mathbf{s}c_{0}t\right|}\right)
          \right]
    d\sigma_{\mathbf{s}}dr} \nonumber\\
  =&\frac{t}{2\pi}\int_{0}^{1}\frac{e^{ir\omega t}}{\left(1-r\right)^{1/2}}\int_{\left|\mathbf{s}\right|=1}\left[\frac{re^{-ir\omega t}}{\left(1+r\right)^{1/2}}A_0\left(\rho\right)Y_1\left(\phi\right)\right. \nonumber\\
 & \left.-\frac{e^{-i\omega t}}{\sqrt{2}}A_0\left(\left|\mathbf{x}+\mathbf{s}c_{0}t\right|\right)Y_1\left(\frac{\mathbf{x}+\mathbf{s}c_{0}t}{\left|\mathbf{x}+\mathbf{s}c_{0}t\right|}\right)\right]d\sigma_{\mathbf{s}}dr\nonumber\\
 & +\frac{t}{2\sqrt{2}\pi}\int_{\left|\mathbf{s}\right|=1}A_0\left(\left|\mathbf{x}+\mathbf{s}c_{0}t\right|\right)Y_1\left(\frac{\mathbf{x}+\mathbf{s}c_{0}t}{\left|\mathbf{x}+\mathbf{s}c_{0}t\right|}\right)d\sigma_{\mathbf{s}}\int_{0}^{1}\frac{e^{-i\left(1-r\right)\omega t}}{\left(1-r\right)^{1/2}}dr\nonumber\\
=: & P_{1}\left(\mathbf{x},t\right)+P_{2}\left(\mathbf{x},t\right).\nonumber
\end{align}
We start with
\begin{equation}\label{eq:P2_def0}
P_{2}\left(\mathbf{x},t\right)=\frac{1}{t^{1/2}}F_{2}\left(\mathbf{x},t\right)\int_{0}^{1}\frac{e^{-ir\omega t}}{r^{1/2}}dr,
\end{equation}
where we made a change of variable $r\mapsto\left(1-r\right)$ and introduced
\begin{equation}\label{eq:F2_def0}
F_{2}\left(\mathbf{x},t\right):=\frac{t^{3/2}}{2^{3/2}\pi}\int_{\left|\mathbf{s}\right|=1}A_0\left(\left|\mathbf{x}+\mathbf{s}c_{0}t\right|\right)Y_1\left(\frac{\mathbf{x}+\mathbf{s}c_{0}t}{\left|\mathbf{x}+\mathbf{s}c_{0}t\right|}\right)d\sigma_{\mathbf{s}}.
\end{equation}
Using \eqref{triangle}, the assumed form of $A_0$ and \eqref{eq:rt_rho_bnd}, both for $\rho>\rho_1$, we have uniformly for $\vx\in\Omega$, $\vs\in\mathbb{S}^1$,
\begin{align}\label{eq:A0_estim}
\left(rt\right)^{3/2}\left|A_0\left(\rho\right)\right|\leq&\begin{cases}
\left(\frac{\rho+\rho_0}{c_0}\right)^{3/2}\left\Vert A_0\right\Vert_{L^\infty\left(\mathbb{R}_+\right)}, \hspace{1em} 0\leq\rho\leq\rho_1,\\
\left(\frac{rt}{\rho}\right)^{3/2}\leq\left(\frac{2}{c_0}\right)^{3/2}, \hspace{1em} \rho>\rho_1,
\end{cases}\\
  \leq& C_1,\hspace{1em}\rho\ge
        0,\nonumber
\end{align}
for some constant $C_1>0$.
Thus, using \eqref{eq:A0_estim} with $r=1$ and recalling the assumptions on $Y_1$, we deduce
\begin{equation}\label{eq:F2_estim}
\underset{\mathbf{x}\in\Omega}{\sup}\left\Vert F_{2}\left(\mathbf{x},\cdot\right)\right\Vert _{L^{\infty}\left(\mathbb{R}_+\right)}=:C_{2}<\infty.
\end{equation}
Finally, employing Lemma~\ref{lem:app_osc_int_estim} from Appendix~\ref{sec:app},
we obtain from \eqref{eq:P2_def0} {and~\eqref{eq:F2_estim}}, for $\vx\in\Omega$ and $t\geq t_0$,
\begin{align}\label{eq:P2_estim0}
\left|P_{2}\left(\mathbf{x},t\right)\right|\leq\frac{\widetilde{C}_2}{t}
\end{align}
with some constant $\widetilde{C}_2>0$ and any $t_0>0$.

\medskip
\underline{Decay of $P_1$.}
{To deal with $P_{1}$, we note that the integrand is a smooth
  function of $r$ in $[0,1)$ and it behaves like $(1-r)^{1/2}$ as $r\to 1$. Integrating by parts in
    the $r$ variable with $e^{ir\omega t}dr$ as differential, both boundary terms vanish (recall also that $A_0\left(\left|\vx\right|\right)\equiv 0$ for
    $\vx\in\Omega$).} We thus arrive at
\begin{align}\label{eq:P1_decomp}
P_1\left(\mathbf{x},t\right)= & -\frac{1}{2\pi i\omega}\int_{0}^{1}\int_{\left|\mathbf{s}\right|=1}e^{ir\omega t}\partial_{r}\left(\frac{1}{\left(1-r\right)^{1/2}}\left[\frac{r e^{-ir\omega t}}{\left(1+r\right)^{1/2}}A_0\left(\rho\right)Y_1\left(\phi\right)\right.\right.\\
 & \left.\left.-\frac{e^{-i\omega t}}{\sqrt{2}}A_0\left(\left|\mathbf{x}+\mathbf{s}c_{0}t\right|\right)Y_1\left(\frac{\mathbf{x}+\mathbf{s}c_{0}t}{\left|\mathbf{x}+\mathbf{s}c_{0}t\right|}\right)\right]\right)d\sigma_{\mathbf{s}}dr\nonumber\\
 =& I_1\left(\mathbf{x},t\right)+I_2\left(\mathbf{x},t\right)+I_3\left(\mathbf{x},t\right)+I_4\left(\mathbf{x},t\right),\nonumber
\end{align}
where
\begin{equation}\label{eq:I1_def0}
I_{1}\left(\mathbf{x},t\right):=\frac{i}{2\pi\omega}\int_{0}^{1}\int_{\left|\mathbf{s}\right|=1}\frac{r}{\left(1-r^2\right)^{1/2}}Y_1\left(\phi\right)e^{ir\omega t}\partial_{r}\left(e^{-ir\omega t}A_0\left(\rho\right)\right)d\sigma_{\mathbf{s}}dr,
\end{equation}
\begin{equation}\label{eq:I2_def0}
I_{2}\left(\mathbf{x},t\right):=\frac{i}{2\pi\omega}\int_{0}^{1}\int_{\left|\mathbf{s}\right|=1}\frac{r}{\left(1-r^2\right)^{1/2}}A_0\left(\rho\right)\partial_{r}Y_1\left(\phi\right)d\sigma_{\mathbf{s}}dr,
\end{equation}
\begin{align}\label{eq:I3_def0}
I_{3}\left(\mathbf{x},t\right):=&\frac{i}{4\pi\omega}\int_{0}^{1}\int_{\left|\mathbf{s}\right|=1}\frac{2+r}{\left(1-r\right)^{1/2}\left(1+r\right)^{3/2}}A_0\left(\rho\right)Y_1\left(\phi\right)d\sigma_{\mathbf{s}}dr,
\end{align}
\begin{align}\label{eq:I4_def0}
I_{4}\left(\mathbf{x},t\right):=&\frac{i}{4\pi\omega}\int_{0}^{1}\int_{\left|\mathbf{s}\right|=1}\frac{1}{\left(1-r\right)^{3/2}}\left[\frac{r}{\left(1+r\right)^{1/2}}A_0\left(\rho\right)Y_1\left(\phi\right)\right.\\
 &\left.-\frac{e^{-i\left(1-r\right)\omega t}}{\sqrt{2}}A_0\left(\left|\mathbf{x}+\mathbf{s}c_{0}t\right|\right)Y_1\left(\frac{\mathbf{x}+\mathbf{s}c_{0}t}{\left|\mathbf{x}+\mathbf{s}c_{0}t\right|}\right)\right]d\sigma_{\mathbf{s}}dr.\nonumber
\end{align}

Here and in the sequel, we use the following notation to avoid having too many brackets: $\partial_r A_0(\rho):=\partial_r (A_0(\rho))$, $\partial_r Y_1(\phi):=\partial_r (Y_1(\phi))$, $\nabla Y_1(\phi):=(\nabla Y_1)(\phi)$.

For the sake of the proof, we shall extend the function $Y_1$ from the circle $\mathbb{S}^1$ to a neighbourhood of it, e.g.\ to the annulus with the inner and outer radii $\frac12$ and $\frac32$, respectively. We choose a constant extension in the radial direction, as this will simplify the proof. In fact, this extension makes $\nabla Y_1$ well defined on $\mathbb{S}^1$. Then, $\nabla Y_1(\phi)$ is a tangent vector to $\mathbb{S}^1$ for each (radial vector)
$\phi\in\mathbb{S}^1$, and hence
\begin{equation}\label{tang-vector}
	\phi\cdot \nabla Y_1(\phi)=0.
\end{equation}

\medskip
\underline{Decay of $I_1$.} 
We have
\begin{equation}\label{eq:A0_dr}
\partial_r A_0\left(\rho\right)=c_0 t A_0^\prime\left(\rho\right) \frac{\vx\cdot\vs+rc_0t}{\left|\vx+\vs rc_0 t\right|},\quad \rho>0.
\end{equation}
Moreover, since $\left|\mathbf{x}+\mathbf{s}rc_{0}t\right|=rc_{0}t\left(1+2\frac{\mathbf{x}\cdot\mathbf{s}}{rc_{0}t}+\frac{\left|\mathbf{x}\right|^{2}}{r^{2}c_{0}^{2}t^{2}}\right)^{1/2}$
for $\left|\vs\right|=1$, 
the estimate
\begin{equation}\label{eq:near_id_estim2}
1-\frac{\mathbf{x}\cdot\mathbf{s}+rc_{0}t}{\left|\mathbf{x}+\mathbf{s}rc_{0}t\right|}=\frac{\left|\vx\right|^2-\left(\vx\cdot\vs\right)^2}{2r^2c_0^2 t^2}+\mathcal{O}\left(\frac{1}{r^3t^3}\right)=\mathcal{O}\left(\frac{1}{r^2t^2}\right)
\end{equation}
is valid for $rt\gg1$. {This can be seen from the Taylor expansion of
  $(1+w)^{-1/2}$ around zero, with $w:=2\frac{\mathbf{x}\cdot\mathbf{s}}{rc_{0}t}+\frac{\left|\mathbf{x}\right|^{2}}{r^{2}c_{0}^{2}t^{2}}$}.

Then, we can write
\begin{align*}
\frac{r^{5/2}t^{3/2}}{c_0}e^{ir\omega t}\partial_{r}\left(e^{-ir\omega t}A_0\left(\rho\right)\right)=&\left(rt\right)^{5/2}\left(A_0^{\prime}\left(\rho\right)-\frac{i\omega}{c_{0}}A_0\left(\rho\right)\right)\\
&-\left(rt\right)^{5/2}A_0^{\prime}\left(\rho\right)\left(1-\frac{\mathbf{x}\cdot\mathbf{s}+rc_{0}t}{\left|\mathbf{x}+\mathbf{s}rc_{0}t\right|}\right),\quad \rho>0,
\end{align*}
where both terms on the right-hand side are uniformly bounded for
$rt>a_1$ (and hence $\rho>\rho_0$), $\vx\in\Omega$, $\left|\vs\right|=1$. This can be deduced
  from~\eqref{eq:near_id_estim2}
{using~\eqref{eq:rt_rho_bnd} and the estimates $\left|A_0^{\prime}\left(\rho\right)-\frac{i\omega}{c_{0}}A_0\left(\rho\right)\right|=3/(2\rho^{5/2})$,
$|A_0^\prime(\rho)|\le
    C/\rho^{3/2}$ for $\rho>\rho_1$ and some constant $C>0$.}
Therefore, we have for 
\begin{equation}\label{eq:F3_def0}
F_{3}\left(\mathbf{x},rt\right):=\frac{ic_{0}}{2\pi\omega}\int_{\left|\mathbf{s}\right|=1}Y_1\left(\phi\right)\frac{r^{5/2}t^{3/2}}{c_0}e^{ir\omega t}\partial_{r}\left(e^{-ir\omega t}A_0\left(\rho\right)\right)d\sigma_{\mathbf{s}}:
\end{equation}
\[
\underset{\mathbf{x}\in\Omega}{\sup}\left\Vert F_{3}\left(\mathbf{x},\cdot\right)\right\Vert _{L^{\infty}\left(a_{1},\infty\right)}=:C_{3}<\infty.
\]
Since both $A_0$, $A_0^\prime$ vanish on $\left[0,\rho_0\right]$, the
{integrals in $r$} 
in each of \eqref{eq:I1_def0}--\eqref{eq:I3_def0} reduces to $\left(a_1/t,1\right)$ (see the discussion before {and after} \eqref{eq:a1_def}). Hence
we can estimate $I_1$ in~\eqref{eq:I1_def0} for $t\ge t_0:=2a_1$ as
\begin{align}\label{eq:I1_estim0}
\left|I_{1}\left(\mathbf{x},t\right)\right|=&\frac{1}{t^{3/2}}\left|\int_{a_1/t}^{1}\frac{1}{r^{3/2} \left(1-r^2\right)^{1/2}}F_{3}\left(\mathbf{x},rt\right)dr\right|\\
\leq&\frac{2C_3}{3^{1/2}t^{3/2}}\int_{a_1/t}^{1/2}\frac{dr}{r^{3/2}}+\frac{2^{3/2}C_3}{t^{3/2}}\int_{1/2}^{1}\frac{dr}{\left(1-r\right)^{1/2}}
\leq\frac{\widetilde{C}_3}{t}\nonumber
\end{align}
with some constant $\widetilde{C}_3>0$. In a similar but simpler fashion we can estimate the terms $I_2$ and $I_3$.

\medskip
\underline{Decay of $I_2$.}
Since
\begin{equation}\label{eq:Y1_dr}
\partial_{r}Y_1\left(\phi\right)=\frac{c_{0}t\mathbf{s}\cdot\nabla Y_1\left(\phi\right)}{\left|\mathbf{x}+\mathbf{s}rc_{0}t\right|} ,
\end{equation}
where we used \eqref{tang-vector}, for
\begin{align}\label{eq:F4_def0}
F_{4}\left(\mathbf{x},rt\right):=&\frac{ic_0\left(rt\right)^{5/2}}{2\pi\omega}\int_{\left|\mathbf{s}\right|=1}A_0\left(\rho\right) 
\frac{\mathbf{s}\cdot\nabla Y_1\left(\phi\right)}{\left|\mathbf{x}+\mathbf{s}rc_{0}t\right|} 
\, d\sigma_{\mathbf{s}},  
\end{align}
we have
\[\left|F_{4}\left(\vx,rt\right)\right|\leq\frac{c_{0}}{\pi\omega}\int_{\left|\vs\right|=1}\frac{rt}{\rho}(rt)^{3/2}\left|A_{0}\left(\rho\right)\right|\left|\nabla Y_{1}\left(\phi\right)\right| d\sigma_{\vs}.
\]
Hence, using~\eqref{eq:rt_rho_bnd} and~\eqref{eq:A0_estim}, we deduce 
\[
\underset{\mathbf{x}\in\Omega}{\sup}\left\Vert F_{4}\left(\mathbf{x},\cdot\right)\right\Vert _{L^{\infty}\left(a_{1},\infty\right)}=:C_{4}<\infty.
\]
Therefore, we obtain for $t\ge t_0=2a_1$
\begin{align}\label{eq:I2_estim0}
\left|I_{2}\left(\mathbf{x},t\right)\right|=&\frac{1}{t^{3/2}}\left|\int_{a_1/t}^{1}\frac{1}{r^{3/2} \left(1-r^2\right)^{1/2}}F_{4}\left(\mathbf{x},rt\right)dr\right|\\
\leq&\frac{2C_4}{3^{1/2}t^{3/2}}\int_{a_1/t}^{1/2}\frac{dr}{r^{3/2}}+\frac{2^{3/2}C_4}{t^{3/2}}\int_{1/2}^{1}\frac{dr}{\left(1-r\right)^{1/2}}
\leq\frac{\widetilde{C}_4}{t}.\nonumber
\end{align}

\medskip
\underline{Decay of $I_3$.}
Similarly,
\begin{equation}\label{eq:F5_def0}
F_{5}\left(\mathbf{x},rt\right):=\frac{i\left(rt\right)^{3/2}}{4\pi\omega}\int_{\left|\mathbf{s}\right|=1}A_0\left(\rho\right)Y_1\left(\phi\right)d\sigma_{\mathbf{s}},
\end{equation}
satisfies
\[
  \left|F_{5}\left(\mathbf{x},rt\right)\right|=\frac{1}{4\pi\omega}\int_{\left|\mathbf{s}\right|=1}
  {\left(rt\right)^{3/2}}
  \left|A_0\left(\rho\right)\right|\left|Y_1\left(\phi\right)\right|d\sigma_{\mathbf{s}}.
\]
Hence, using again~\eqref{eq:A0_estim},
\begin{equation}\label{eq:F5_estim}
\underset{\mathbf{x}\in\Omega}{\sup}\left\Vert F_{5}\left(\mathbf{x},\cdot\right)\right\Vert _{L^{\infty}\left(0,\infty\right)}=:C_{5}<\infty.
\end{equation}
Therefore, we obtain for $t\ge t_0=2a_1$
\begin{align}\label{eq:I3_estim0}
\left|I_{3}\left(\mathbf{x},t\right)\right|=&\frac{1}{t^{3/2}}\left|\int_{a_1/t}^{1}\frac{2+r}{r^{3/2}\left(1-r\right)^{1/2}\left(1+r\right)^{3/2}}F_{5}\left(\mathbf{x},rt\right)dr\right|\\
\leq&\frac{5C_5}{2^{1/2}t^{3/2}}\int_{a_1/t}^{1/2}\frac{dr}{r^{3/2}}+\frac{8C_5}{3^{1/2}t^{3/2}}\int_{1/2}^{1}\frac{dr}{\left(1-r\right)^{1/2}}
\leq\frac{\widetilde{C}_5}{t}\nonumber
\end{align}
with some constants $\widetilde{C}_4$, $\widetilde{C}_5>0$. 

\medskip
\underline{Decay of $I_4$.}
To treat the term $I_4$, 
we introduce
\begin{align}\label{eq:F6_def_tild0}
\widetilde{F}_{6}\left(\mathbf{x},r,t\right):=&\int_{\left|\mathbf{s}\right|=1}\left[\frac{r}{\left(1+r\right)^{1/2}}A_0\left(\rho\right)Y_1\left(\phi\right)\right.\\
 &\left.-\frac{e^{-i\left(1-r\right)\omega t}}{\sqrt{2}}A_0\left(\left|\mathbf{x}+\mathbf{s}c_{0}t\right|\right)Y_1\left(\frac{\mathbf{x}+\mathbf{s}c_{0}t}{\left|\mathbf{x}+\mathbf{s}c_{0}t\right|}\right)\right]d\sigma_{\mathbf{s}},\nonumber
\end{align}
\begin{equation}\label{eq:F6_def0}
F_{6}\left(\mathbf{x},r,t\right):=\frac{1}{1-r}\widetilde{F}_{6}\left(\mathbf{x},r,t\right).
\end{equation}
Using $\widetilde{F}_{6}\left(\mathbf{x},1,t\right)=0$, we rewrite \eqref{eq:F6_def0} as 
\begin{equation}\label{eq:F6_estim}
F_{6}\left(\mathbf{x},r,t\right)=-\frac{1}{1-r}\left(\widetilde{F}_{6}\left(\mathbf{x},1,t\right)-\widetilde{F}_{6}\left(\mathbf{x},r,t\right)\right)=-\frac{1}{1-r}\int_r^1 \partial_r\widetilde{F}_{6}\left(\mathbf{x},\tau,t\right)d\tau.
\end{equation}
Then, for $r\in\left(1-a_1/t,1\right)$, we estimate
\begin{align}\label{Est-F6}
\left|F_{6}\left(\mathbf{x},r,t\right)\right|\leq&\left\Vert\partial_r\widetilde{F}_{6}\left(\mathbf{x},\cdot,t\right)\right\Vert_{L^\infty\left(1-a_1/t,1\right)}\\
\leq&\int_{\left|\mathbf{s}\right|=1}\left[\left(\frac{1}{2\left(1+r\right)^{3/2}}+\frac{1}{\left(1+r\right)^{1/2}}\right)\left|A_0\left(\rho\right)\right|\left|Y_1\left(\phi\right)\right|\right.\nonumber\\
&\left. +\frac{1}{\left(1+r\right)^{1/2}}\left|\partial_r A_0\left(\rho\right)\right|\left|Y_1\left(\phi\right)\right|+\frac{1}{\left(1+r\right)^{1/2}}\left|A_0\left(\rho\right)\right|\left|\partial_r Y_1\left(\phi\right)\right|\right.\nonumber\\
&\left.+\frac{\omega t}{\sqrt{2}}\left|A_0\left(\left|\vx+\vs c_0 t\right|\right)\right|\left|Y_1\left(\frac{\vx+\vs c_0 t}{\left|\vx+\vs c_0 t\right|}\right)\right|\right]d\sigma_{\mathbf{s}}.\nonumber
\end{align}
For the term with $\partial_r A_0$ we first use \eqref{eq:A0_dr} and consider the following estimate which holds uniformly for $\vx\in\Omega$, $\vs\in\mathbb{S}^1$. It is obtained in analogy to \eqref{eq:A0_estim}.
\begin{align*}
\left(rt\right)^{3/2}\left|A_0'\left(\rho\right)\right|\leq&\begin{cases}
\left(\frac{\rho+\rho_0}{c_0}\right)^{3/2}\left\Vert A_0'\right\Vert_{L^\infty\left(\mathbb{R}_+\right)}, \hspace{1em} 0\leq\rho\leq\rho_1,\\
\left(\frac{rt}{\rho}\right)^{3/2}\left(\frac{\omega}{c_0}+\frac{3}{2\rho_1}\right)\leq\left(\frac{2}{c_0}\right)^{3/2}\left(\frac{\omega}{c_0}+\frac{3}{2\rho_1}\right), \hspace{1em} \rho>\rho_1,
\end{cases}\\
  \leq& \widetilde C_1,\hspace{1em}\rho\ge
        0,\nonumber
\end{align*}
for some constant $\widetilde C_1>0$.

For the term with $\partial_r Y_1$ we employ  \eqref{eq:A0_estim}, $r\ge1-\frac{a_1}{t}\ge\frac12$ (for $t\ge t_0=2a_1$), and
$$
  |\partial_r Y_1(\phi)|\le \frac{2c_0t|\nabla Y_1(\phi)|}{\rho}\le \frac{2c_0t}{\rho_0}\|\nabla Y_1\|_{L^\infty(\mathbb{S}^1)},
$$
where we used \eqref{eq:Y1_dr} and the fact that $A_0$ vanishes on $[0,\rho_0]$. 

Therefore, employing in \eqref{Est-F6} the estimate  \eqref{eq:A0_estim} with $r=1$
we deduce that 
\begin{equation}\label{eq:C6_def0}
\underset{\mathbf{x}\in\Omega,\hspace{0.3em}r\in\left(1-a_1/t,\, 1\right),\hspace{0.3em}t>2a_1}{\sup}\left| F_{6}\left(\mathbf{x},r,t\right)t^{1/2} \right|=:C_{6}<\infty.
\end{equation}

Moreover, for $r\in\left(1/2,1-a_1/t\right)$ {and for
  $\epsilon\in(0,1/2]$, taking into account~\eqref{eq:F6_def0}}, we have 
\[
\left|\left(1-r\right)^{1/2+\epsilon}t^{1+\epsilon}F_{6}\left(\mathbf{x},r,t\right)\right|= \left|\widetilde{F}_6\left(\mathbf{x},r,t\right)\right|\frac{t^{1+\epsilon}}{\left(1-r\right)^{1/2-\epsilon}} \leq \left|\widetilde{F}_6\left(\mathbf{x},r,t\right)\right|\frac{t^{3/2}}{a_1^{1/2-\epsilon}}.
\]
Hence, {with the constants $C_2$ and $C_5$ introduced in
  \eqref{eq:F2_estim} and \eqref{eq:F5_estim}, respectively,}
we obtain from \eqref{eq:F6_def_tild0} 
that, for any $\epsilon\in\left(0,1/2\right]$,
\begin{align}\label{eq:C7_def0}
\underset{\mathbf{x}\in\Omega,\hspace{0.3em}r\in\left(1/2,\,
  1-a_1/t\right),\hspace{0.3em}t>2a_1}{\sup}\left|\left(1-r\right)^{1/2+\epsilon}t^{1+\epsilon}F_{6}\left(\mathbf{x},r,t\right)\right|\leq&
                                                                                                                                            \frac{1}{a_1^{1/2-\epsilon}}\left(\dfrac{8\pi\omega}{3^{1/2}} C_{5}+2\pi C_2\right)\\
=:&C_7<\infty.\nonumber
\end{align}
Bounds~\eqref{eq:C7_def0} and~\eqref{eq:C6_def0} imply that,
for $t\ge t_0=2a_1$ and $\epsilon\in\left(0,1/2\right]$, we get
\begin{align}\label{eq:I4_estim0}
\left|I_{4}\left(\mathbf{x},t\right)\right|\leq&\frac{1}{4\pi\omega}\int_{0}^{1}\frac{1}{\left(1-r\right)^{1/2}}\left|F_{6}\left(\mathbf{x},r,t\right)\right|dr\\
=&\frac{1}{4\pi\omega}\left(\int_{0}^{1/2}\ldots+\int_{1/2}^{1-a_1/t}\ldots+\int_{1-a_1/t}^{1}\ldots\right)\nonumber\\
\leq&\frac{2^{3/2}}{t^{3/2}}\left(C_5\int_{0}^{1/2}\frac{dr}{r^{1/2}}+\frac{C_2}{4\omega}\right)+\frac{C_7}{4\pi\omega
      t^{1+\epsilon}}\int_{1/2}^{1-a_1/t}\frac{dr}{\left(1-r\right)^{1+\epsilon}}
      \nonumber\\
&+\frac{C_6}{4\pi\omega
                    t^{1/2}}\int_{1-a_1/t}^{1}\frac{dr}{\left(1-r\right)^{1/2}}
\leq\frac{\widetilde{C}_6}{t}\nonumber
\end{align}
with some constant $\widetilde{C}_6>0$. 
For the interval $\left(0,1/2\right)$, we estimated here the integrand directly from \eqref{eq:I4_def0}, using~\eqref{eq:F5_estim} and~\eqref{eq:F2_estim}. 

From estimates~\eqref{eq:I1_estim0}, \eqref{eq:I2_estim0},
\eqref{eq:I3_estim0}, and \eqref{eq:I4_estim0} of the terms $I_1$,
$I_2$, $I_3$, and $I_4$, respectively, in decomposition \eqref{eq:P1_decomp}, we obtain 
$$
  |P_1(x,t)|\le\frac{C_8}{t}
$$
with some constant $C_8>0$ and $t_0=2a_1$. Together with \eqref{eq:P2_estim0} this gives the estimate of $P$ in~\eqref{eq:boundQP}, again
    with $t_0=2a_1$.
This concludes the proof of~\eqref{eq:v_dec2show} in the
  case $d=2$.

\medskip
\noindent
{\bf$\bullet$  Case $d=3$.}

The solution is given by Kirchhoff's formula \cite[Par. 2.4.1 (c)]{Evans} 
\begin{equation}\label{eq:v_sol_Kirch}
v\left(\vx,t\right)=\frac{1}{4\pi}\int_{\left|\vs\right|=1}t \, v_1\left(\vx+\vs c_0 t\right)d\sigma_\vs,
\end{equation}
where $d\sigma_\vs$ denotes the surface measure of the unit sphere $\mathbb{S}^2$.
{In this case, it is immediate to see that \eqref{eq:v_sol_Kirch}
implies \eqref{eq:v_dec2show}, owing to assumption~\eqref{eq:assumption3D}.}

\medskip
\noindent
{\bf$\bullet$  Conclusion of the proof.}
So far, we have proved the decay of the solution under the assumption
        $v_0\equiv 0$. We now extend the result to the general case
        and show that estimates analogous to \eqref{eq:v_dec2show}
        hold true for the derivatives of the solution.
 
To proceed, it is convenient to introduce the following notation.
Given a function $w$ and constants $\omega$, $c_0$, $\rho_0>0$, $\rho_1>\rho_0$, we say that 
\begin{equation}\label{eq:set-A}
w\in\mathcal{A}\equiv\mathcal{A}_{\omega,c_0,\rho_0,\rho_1}
\end{equation}
if the following conditions are satisfied:
\begin{itemize}
\item[-] When $d=2$, we can write 
\[
w(\vx)=A_w(|\vx|)Y_w\left(\frac{\vx}{|\vx|}\right)+V_w(\vx)
\]
for some functions $A_w\in C^1(\mathbb{R}_{+})$, $Y_w\in C^1(\mathbb{S}^1)$, $V_w\in C(\mathbb{R}^2)$ 
such that
\[
A_w(\left|\vx\right|)\equiv 0\equiv V_w(\vx),\quad |\vx|\leq\rho_0, \qquad A_w(\rho)=
\dfrac{e^{i\frac{\omega}{c_0}\rho}}{\rho^{3/2}},\quad \rho>\rho_1,
\]
and we have
\[
|\vx|^{5/2}|V_w(\vx)|\leq C,\quad \vx\in\mathbb{R}^2,
\]
for some constant $C>0$.
\item[-] When $d=3$, we have $w\in C(\mathbb{R}^3)$ and
\[
|\vx|^{2}|w(\vx)|\leq C,\quad \vx\in\mathbb{R}^3,
\]
for some constant $C>0$.
\end{itemize} 

Let us denote $Z\left[v_0,v_1\right]\equiv Z$ the solution of the wave equation $\partial_t^2 Z\left(\vx,t\right)-c_0^2\Delta Z\left(\vx,t\right)=0$ for $\vx\in\mathbb{R}^d$, $t>0$, subject to the initial conditions $Z\left(\mathbf{x},0\right)=v_0\left(\mathbf{x}\right)$, $\partial_t Z\left(\mathbf{x},0\right)=v_1\left(\mathbf{x}\right)$. {The assumptions $v_0\in C^{6}\left(\mathbb{R}^d\right)$ and $v_1\in C^{5}\left(\mathbb{R}^d\right)$ entail 
\begin{equation}\label{eq:Z_reg}
Z\in C^{5}\left(\mathbb{R}^d\times\mathbb{R}_+\right), \quad
\partial_t Z\in C^{4}\left(\mathbb{R}^d\times\mathbb{R}_+\right);
\end{equation}
recall Theorems 2 and 3 in \cite[Par. 2.4.1]{Evans}  {(applied to $Z$ and its derivatives)}, see also \eqref{eq:v_sol_v0v1} and \eqref{eq:v_sol_Kirch_2}}.

We recall that, in proving the decay for $Z\left[0,v_1\right]$ given by \eqref{eq:v_dec2show}, we have only used that $v_1\in\mathcal{A}$.
But, for $d=3$, due to assumption \eqref{eq:assumption3D}, we also have
$$
  v_0,\,\Delta v_0,\,\Delta^2 v_0,\,v_1,\,\Delta v_1,\,\Delta^2 v_1\in \mathcal{A},
$$
which will be used to prove the decay of $Z$ with such initial conditions.

For $d=2$, we observe that 
$v_1\in\mathcal{A}$ entails that $\Delta v_1$, $\Delta^2 v_1\in\mathcal{A}$ under the regularity assumptions on $v_1$ made in \eqref{eq:gen_ICs}, \eqref{eq:V0V1_dec}.
Indeed, a short computation in polar coordinates yields that 
\begin{align*}
\Delta v_{1}\left(x\right)=\begin{cases}
0, & \left|\mathbf{x}\right|\leq\rho_{0},\\
-\left(\frac{\omega}{c_{0}}\right)^{2}\frac{e^{i\frac{\omega}{c_{0}}\left|\mathbf{x}\right|}}{\left|\mathbf{x}\right|^{3/2}} Y_1\left(\frac{\vx}{|\vx|}\right)+\mathcal{O}\left(\frac{1}{\left|\mathbf{x}\right|^{5/2}}\right), & \left|\mathbf{x}\right|>\rho_{1},
\end{cases}
\end{align*}
so $\Delta v_1\in\mathcal{A}$ with $Y_w=-\left(\omega/c_0\right)^2 Y_1$. Iterating, we also obtain $\Delta^2 v_1\in\mathcal{A}$ (with $Y_w=\left(\omega/c_0\right)^4 Y_1$). 
Since the assumptions on $v_0$ made in \eqref{eq:gen_ICs}, \eqref{eq:V0V1_dec} provide 
$v_0\in\mathcal{A}$, we have similarly $\Delta v_0$, $\Delta^2 v_0\in\mathcal{A}$.

Consequently, we deduce both for $d=2$ and $d=3$ that 
\begin{equation*}
\left|\Delta Z\left[0, v_1\right]\left(\vx,t\right)\right|=\left| Z\left[0, \Delta v_1\right]\left(\vx,t\right)\right|\leq \frac{C}{\left(1+t^2\right)^{1/2}}, \hspace{1em}\vx\in\Omega,  \hspace{1em}t\geq 0,
\end{equation*}
{where the first equality is implied by $\partial_t^2\Delta Z\left[0, v_1\right]\left(\vx,t\right) = \Delta \partial_t^2 Z\left[0, v_1\right]\left(\vx,t\right)$ which is due to $v_1{\in C^5\left(\mathbb{R}^d\right)\subset} C^4\left(\mathbb{R}^d\right)$.
Therefore,} in view of validity of the wave equation, {we deduce}
\begin{equation*}
\left|\partial_t^2 Z\left[0, v_1\right]\left(\vx,t\right)\right|\leq \frac{C}{\left(1+t^2\right)^{1/2}}, \hspace{1em}\vx\in\Omega,  \hspace{1em}t\geq 0.
\end{equation*}
For fixed $\vx\in\Omega$, we interpolate between the decay estimates of $Z[0, v_1]$ and $\partial_t^2 Z[0, v_1]$ by using Lemma~\ref{1D-interpol}. Then, we have
\begin{equation*}
\left|\partial_t Z\left[0, v_1\right]\left(\vx,t\right)\right|\leq \frac{C}{\left(1+t^2\right)^{1/2}}, \hspace{1em}\vx\in\Omega,  \hspace{1em}t\geq 0.
\end{equation*}

We can write $Z\left[v_0,v_1\right]=Z\left[0,v_1\right]+\partial_t Z\left[0,v_0\right]$, and hence we have
\begin{equation}\label{eq:W_dec_estim}
\left|Z\left[v_0, v_1\right]\left(\vx,t\right)\right|\leq \frac{C}{\left(1+t^2\right)^{1/2}}, \hspace{1em}\vx\in\Omega,  \hspace{1em}t\geq 0,
\end{equation}
due to $v_1$, $v_0$, $\Delta v_0\in\mathcal{A}$.

Also, writing $\Delta Z\left[v_0,v_1\right]=Z\left[0,\Delta v_1\right]+\partial_t Z\left[0,\Delta v_0\right]$, we have
\begin{equation}\label{eq:W_Lapl_dec_estim}
\left|\Delta Z\left[v_0, v_1\right]\left(\vx,t\right)\right|\leq \frac{C}{\left(1+t^2\right)^{1/2}}, \hspace{1em}\vx\in\Omega,  \hspace{1em}t\geq 0,
\end{equation}
since $\Delta v_1$, $\Delta v_0$, $\Delta^2 v_0\in\mathcal{A}$ and $v_0{\in C^6\left(\mathbb{R}^d\right)\subset} C^{5}\left(\mathbb{R}^d\right)$, $v_1{\in C^5\left(\mathbb{R}^d\right)\subset} C^{4}\left(\mathbb{R}^d\right)$. 
Using the wave equation, we interpolate between the decay estimates of $Z[v_0, v_1]$ and $\partial_t^2 Z[v_0, v_1]$ by employing Lemma \ref{1D-interpol}. We thus obtain
\begin{equation}\label{eq:W_t_dec_estim}
\left|\partial_t Z\left[v_0, v_1\right]\left(\vx,t\right)\right|\leq \frac{C}{\left(1+t^2\right)^{1/2}}, \hspace{1em}\vx\in\Omega,  \hspace{1em}t\geq 0.
\end{equation}


Moreover, {since $\Delta Z\left(0,\Delta v_0\right)=Z\left(0,\Delta^2 v_0\right)$ due to $v_0\in C^{6}\left(\mathbb{R}^d\right)$, we can write} 
\begin{align*}
\partial_t \Delta Z\left[v_0,v_1\right]=&\partial_t Z\left[0,\Delta v_1\right]+\partial_t^2 Z\left[0,\Delta v_0\right]\\
=&\partial_t Z\left[0,\Delta v_1\right]+c_0^2 Z\left[0,\Delta^2 v_0\right].
\end{align*}
{Consequently, }
\begin{equation}\label{eq:W_t_Lapl_dec_estim}
\left|\partial_t\Delta Z\left[v_0, v_1\right]\left(\vx,t\right)\right|\leq \frac{C}{\left(1+t^2\right)^{1/2}}, \hspace{1em}\vx\in\Omega,  \hspace{1em}t\geq 0,
\end{equation}
as $v_1$, $\Delta v_1$, $\Delta^2 v_1$, $\Delta^2 v_0\in\mathcal{A}$ {and $v_0\in C^{6}\left(\mathbb{R}^d\right)$, $v_1\in C^{5}\left(\mathbb{R}^d\right)$}.


To deduce the estimates analogous to \eqref{eq:W_Lapl_dec_estim} and \eqref{eq:W_t_Lapl_dec_estim}
but involving the gradient of $Z$ instead of the Laplacian, we use an
interpolation argument. 
In particular, for a function $u\in
  C^2\big(\widetilde{\Omega}\big)$, with some $\widetilde{\Omega}$
  such that $\Omega\Subset\widetilde{\Omega}$,
using interior elliptic regularity results, we obtain
\begin{align}\label{eq:inter_reg}
\left\Vert \nabla u\right\Vert _{L^{\infty}\left(\Omega\right)} & \leq\sup_{x\in\Omega}\left(\frac{1}{d_{x}}\right)\sup_{x\in\Omega}\left(d_{x}\left|\nabla u\left(x\right)\right|\right)\leq C_{01}\sup_{x\in\widetilde{\Omega}}\left(d_{x}\left|\nabla u\left(x\right)\right|\right)\\
 & \leq C_{01}C_{02}\left[\left\Vert u\right\Vert _{L^{\infty}\left(\widetilde{\Omega}\right)}+\sup_{x\in\widetilde{\Omega}}d_{x}^2\left\Vert \Delta u\right\Vert _{L^{\infty}\left(\widetilde{\Omega}\right)}\right]\nonumber\\
 & \leq C_{03}\left[\left\Vert u\right\Vert _{L^{\infty}\left(\widetilde{\Omega}\right)}+\left\Vert \Delta u\right\Vert _{L^{\infty}\left(\widetilde{\Omega}\right)}\right],\nonumber
\end{align}
with some constants $C_{01}$, $C_{02}$, $C_{03}>0$. Here, we employed
the notation
$d_x:=\textrm{dist}\big(x,\partial\widetilde{\Omega}\big)$ and,
in the second line, we used the first estimate of \cite[Thm
3.9]{Gilbarg}. Observe now that, since in the statement of the
present lemma, the domain $\Omega$ was arbitrary, all the
previous steps of this proof remain valid (with different
constants) for the larger domain $\widetilde{\Omega}$. In particular,
estimates \eqref{eq:W_dec_estim}--\eqref{eq:W_t_Lapl_dec_estim} are valid with $\Omega$ replaced by $\widetilde{\Omega}$. Consequently,
according to \eqref{eq:inter_reg} applied to $Z$ and to
  $\partial_t Z$ {(permitted by the regularity~\eqref{eq:Z_reg}), 
  } 
we deduce
\[
\left|\nabla Z\left[v_0, v_1\right]\left(\vx,t\right)\right|+\left|\partial_t\nabla Z\left[v_0, v_1\right]\left(\vx,t\right)\right|\leq \frac{C}{\left(1+t^2\right)^{1/2}}, \hspace{1em}\vx\in\Omega,  \hspace{1em}t\geq 0,
\]
and the proof is complete.

\qed


\subsection{A variant of Lemma~\ref{lem:gen_dec}}
In this subsection, we present a small variation of Lemma~\ref{lem:gen_dec}, which provides a weaker result under weaker assumptions. This lemma will be used as an auxiliary tool to prove Lemma~\ref{lem:spec_dec}.
\begin{lem}\label{lem:Y0Y1_zero} 
Let $d=2, 3$ and $\omega$, $\rho_0>0$, $\rho_1>\rho_0$ be some fixed
constants. Using the notation introduced in Lemma \ref{lem:gen_dec}, suppose that $\Omega\Subset\mathbb{B}_{\rho_0}$.
When $d=2$, we assume that $v_0\in C^1\left(\mR^2\right)$, $v_1\in C\left(\mR^2\right)$ are such that
\begin{equation}\label{eq:v0_dv0_v1_ass_d2}
\left|\vx\right|^{5/2} 
\big(\left|v_0\left(\vx\right)\right|+\left|v_1\left(\vx\right)\right|+\left|\nabla v_0\left(\vx\right)\right|\big)\leq C_0,\hspace{1em}\vx\in\mathbb{R}^{2},
\end{equation}
and $v_0\left(\vx\right)=v_1\left(\vx\right)\equiv
0$ for $\left|\vx\right|\leq\rho_0$.
When  $d=3$, we assume that $v_0\in C^{1}\left(\mR^3\right)$, $v_1\in C\left(\mR^3\right)$ are such that 
\begin{equation}\label{eq:v0_dv0_v1_ass_d3}
\left|\vx\right|^{2} 
\big(
\left|v_0\left(\vx\right)\right|+
\left|v_1\left(\vx\right)\right|+\left|\nabla v_0\left(\vx\right)\right|
\big) \leq C_0,\hspace{1em}\vx\in\mathbb{R}^3.
\end{equation}
Then, there exists a constant $C>0$ such that, for all $\vx\in\Omega$ 
and $t\geq 0$, the solution of \eqref{eq:wave_c0}  satisfies
\begin{equation}
\left|v\left(\vx,t\right)\right|\leq\frac{C}{\left(1+t^2\right)^{1/2}},\hspace{1em}\vx\in\Omega,\hspace{1em} t\geq0.\label{eq:v_only_estim}
\end{equation}
\end{lem}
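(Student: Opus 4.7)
The idea is to use the explicit Kirchhoff / Poisson representation formulas for the solution of~\eqref{eq:wave_c0} and to reduce the estimates to those already performed for the term $Q$ in the proof of Lemma~\ref{lem:gen_dec}. Since no oscillatory structure is assumed on the data, no integration-by-parts / stationary-phase argument is needed and the proof is considerably shorter. The weakened conclusion (only a bound on $|v|$, not on its derivatives) corresponds precisely to the fact that only the $Q$-type piece of the previous analysis enters.

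\textbf{Case $d=3$.} Kirchhoff's formula \cite[Par.~2.4.1~(c)]{Evans} expresses $v(\vx,t)$ as
\begin{align*}
v(\vx,t) = &\frac{1}{4\pi}\int_{|\vs|=1} v_0(\vx+\vs c_0 t)\,d\sigma_\vs + \frac{c_0 t}{4\pi}\int_{|\vs|=1} \vs\cdot\nabla v_0(\vx+\vs c_0 t)\,d\sigma_\vs \\
&+ \frac{t}{4\pi}\int_{|\vs|=1} v_1(\vx+\vs c_0 t)\,d\sigma_\vs.
\end{align*}
Pick $t_0>0$ such that $|\vx+\vs c_0 t|\ge c_0 t/2$ for all $\vx\in\Omega$, $|\vs|=1$, $t\ge t_0$. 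By~\eqref{eq:v0_dv0_v1_ass_d3} every integrand is then bounded by $C/t^2$, so the first term is $\mathcal{O}(1/t^2)$ and the other two are $\mathcal{O}(1/t)$. For $t\in[0,t_0]$, $v$ is uniformly bounded on $\Omega$ by continuity and boundedness of the initial data. This yields~\eqref{eq:v_only_estim}.

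\textbf{Case $d=2$.} Split $v=v^{(0)}+v^{(1)}$, where $v^{(i)}$ solves \eqref{eq:wave_c0} with initial data $(v_0,0)$ and $(0,v_1)$, respectively. By Poisson's formula~\eqref{eq:v_sol_v1},
\[
v^{(1)}(\vx,t) = \frac{t}{2\pi}\int_0^1 \frac{r}{(1-r^2)^{1/2}}\int_{|\vs|=1} v_1(\vx+\vs r c_0 t)\,d\sigma_\vs\,dr,
\]
which is exactly the term $Q$ in~\eqref{eq:Q_def}. Since the vanishing and decay hypotheses on $v_1$ in~\eqref{eq:v0_dv0_v1_ass_d2} match those on $V_1$ in Lemma~\ref{lem:gen_dec}, the argument culminating in~\eqref{eq:Q_estim1} applies verbatim and gives $|v^{(1)}(\vx,t)|\le C/t$ for $t\ge t_0:=2a_1$, with $a_1$ as in~\eqref{eq:a1_def}. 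For $v^{(0)}$ use $v^{(0)}=\partial_t M_0$ with
\[
M_0(\vx,t):=\frac{t}{2\pi}\int_0^1\frac{r}{(1-r^2)^{1/2}}\int_{|\vs|=1} v_0(\vx+\vs rc_0 t)\,d\sigma_\vs\,dr.
\]
Differentiating yields a first term without time prefactor (the analogue of $Q$ with $v_0$ replacing $v_1$, estimated by $C/t^2$ by the same splitting of the $r$-integral at $r=1/2$) and a second term of the form $\frac{c_0 t}{2\pi}\int_0^1 \frac{r^2}{(1-r^2)^{1/2}}\int_{|\vs|=1} \vs\cdot\nabla v_0(\vx+\vs rc_0 t)\,d\sigma_\vs\,dr$. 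Since $v_0\in C^1(\mR^2)$ vanishes on $\mathbb{B}_{\rho_0}$, so does $\nabla v_0$; together with the $|\vx|^{-5/2}$ decay of $\nabla v_0$ in~\eqref{eq:v0_dv0_v1_ass_d2}, the same splitting yields an $\mathcal{O}(1/t^{3/2})$ bound for this second term. Hence $|v^{(0)}(\vx,t)|\le C/t$ for $t\ge t_0$, and boundedness on $[0,t_0]$ is again immediate.

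\textbf{Main obstacle.} The only slightly technical point is the analysis of the second term of $\partial_t M_0$, where the explicit $t$ prefactor threatens the desired decay: the estimate works precisely because $\nabla v_0$ inherits both the support and the $|\vx|^{-5/2}$ decay of $v_0$, and because the singular weight $(1-r^2)^{-1/2}$ is integrable at $r=1$. In contrast with the analysis of the term $P$ in Lemma~\ref{lem:gen_dec}, no oscillatory cancellation is invoked — which is exactly why weaker assumptions on the data suffice and only a bound on $|v|$ (rather than on its derivatives) is obtained.
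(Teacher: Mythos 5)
Your proof is correct and follows essentially the same route as the paper: Kirchhoff's formula with direct pointwise bounds for $d=3$, and for $d=2$ the same three-term Poisson decomposition (your $\partial_t M_0$ expansion reproduces exactly the paper's terms $Q_2$ and $Q_3$, while $v^{(1)}$ is the paper's $Q_1=Q$), with the same reduction of the $r$-integral to $(a_1/t,1)$ and the same splitting at $r=1/2$. Your $\mathcal{O}(t^{-3/2})$ bound for the gradient term is in fact slightly sharper than the $\mathcal{O}(t^{-1})$ the paper records, but both suffice for the stated conclusion.
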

\begin{proof}
We treat separately the cases $d=2$ and $3$.

\begin{itemize}
\item{\bf Case $d=2$.}
\end{itemize}

According to Poisson's formula  \cite[Par. 2.4.1 (c)]{Evans} (see also \eqref{eq:v_sol_v0v1} below), we have 
\begin{align}
v\left(\mathbf{x},t\right)= & \frac{t}{2\pi}\int_{0}^{1}\frac{r}{\left(1-r^{2}\right)^{1/2}}\int_{\left|\mathbf{s}\right|=1}v_1\left(\mathbf{x}+\mathbf{s}rc_0t\right)d\sigma_{\mathbf{s}}dr\label{eq:v_sol_v0v1_Poiss}\\
&+\frac{1}{2\pi}\int_{0}^{1}\frac{r}{\left(1-r^{2}\right)^{1/2}}\int_{\left|\mathbf{s}\right|=1}v_0\left(\mathbf{x}+\mathbf{s}rc_0t\right)d\sigma_{\mathbf{s}}dr\nonumber\\
&+\frac{c_0t}{2\pi}\int_{0}^{1}\frac{r^2}{\left(1-r^{2}\right)^{1/2}}\int_{\left|\mathbf{s}\right|=1}\vs\cdot\nabla v_0\left(\mathbf{x}+\mathbf{s}rc_0t\right)d\sigma_{\mathbf{s}}dr\nonumber\\
=:&Q_1\left(\vx,t\right)+Q_2\left(\vx,t\right)+Q_3\left(\vx,t\right).\nonumber
\end{align}
Using the support assumption on $v_0$, we recall the definition of
$a_1$ given by \eqref{eq:a1_def} and realise that the term $Q_1$ is
identical to $Q$ in the proof of Lemma~\ref{lem:gen_dec} (see
\eqref{eq:Q_def}). Hence, \eqref{eq:Q_estim1} furnishes the required
estimate of $Q_1$ due to assumption \eqref{eq:v0_dv0_v1_ass_d2}. The
term $Q_2$ only differs from $Q_1$ by the absence of the $t$ factor and thus obeys an analogous estimate (in fact it is even $\mathcal O(t^{-2})$). Therefore, we immediately obtain
\begin{equation}\label{eq:Q1Q2_estim} 
\left|Q_1\left(\vx,t\right)\right|+\left|Q_2\left(\vx,t\right)\right|\leq\frac{C}{\left(1+t^2\right)^{1/2}},\hspace{1em}\vx\in\Omega,\hspace{1em} t\geq t_0,
\end{equation}  
with some constant $C>0$ and $t_0=2a_1$. 

It thus remains to deal with $Q_3$. To this effect, we rewrite 
\[
Q_{3}\left(\vx,t\right)=\frac{c_{0}}{2\pi t^{3/2}}\int_{a_{1}/t}^{1}\frac{1}{r^{1/2}\left(1-r^{2}\right)^{1/2}}\int_{\left|\vs\right|=1}\left(\frac{rt}{\rho}\right)^{5/2}\rho^{5/2}\vs\cdot\nabla v_{0}\left(\vx+\vs rc_{0}t\right)d\sigma_{\vs}dr,
\]
where $\rho:=\left|\vx+\vs rc_{0}t\right|$, and we reduced the $r$-integration range, following the discussion ``around'' \eqref{eq:a1_def}. Employing \eqref{eq:rt_rho_bnd} and \eqref{eq:v0_dv0_v1_ass_d2}, we can estimate
\[
\left|Q_{3}\left(\vx,t\right)\right|\leq\frac{2^{5/2}C_{0}}{\left(c_{0}t\right)^{3/2}}\int_{a_{1}/t}^{1}\frac{dr}{r^{1/2}\left(1-r^{2}\right)^{1/2}}\leq\frac{2^{5/2}C_{0}}{c_{0}^{3/2}a_{1}^{1/2}}\frac{1}{t}\int_{0}^{1}\frac{dr}{\left(1-r^{2}\right)^{1/2}}
\]
for $\vx\in\Omega$ and $t\geq t_0$. Combined with \eqref{eq:Q1Q2_estim}, this furnishes the bound on $Q$. Continuity of $v$ (as follows from \eqref{eq:v_sol_v0v1_Poiss} due to the regularity assumptions on the initial data) implies that the bound can be extended to $t\geq 0$. Therefore, we conclude \eqref{eq:v_only_estim}.

\begin{itemize}
\item{\bf Case $d=3$.}
\end{itemize}
Kirchhoff's formula \cite[Par. 2.4.1 (c)]{Evans} (see also \eqref{eq:v_sol_Kirch_2} below) yields
\begin{equation}\label{eq:v_sol_v0v1_Kirch}
v\left(\vx,t\right)=\frac{1}{4\pi}\int_{\left|\vs\right|=1}\left[t \,
  v_1\left(\vx+\vs c_0 t\right)+v_0\left(\vx+\vs c_0 t\right)+ t c_0 \vs\cdot\nabla v_0\left(\vx+\vs c_0 t\right)\right]d\sigma_\vs.
\end{equation}
Hence, estimating each term in \eqref{eq:v_sol_v0v1_Kirch} using \eqref{eq:rt_rho_bnd} and \eqref{eq:v0_dv0_v1_ass_d3} directly implies \eqref{eq:v_only_estim}.
\end{proof}

\subsection{Proof of Lemma~\ref{lem:spec_dec}}
{Since by our assumptions $v_0\in C^7(\mR^d)$ and $v_1\in C^6(\mR^d)$, Theorems 2 and 3 in \cite[Par. 2.4.1]{Evans} {(applied to $v$ and its derivatives)} 
imply that the solution~$v$ to~\eqref{eq:wave_c0} satisfies
$v\in C^{6}(\mR^d\times\mR_{+})$\, and thus
$\partial_t v\in C^5(\mR^d\times\mR_{+})$.}

Similarly to Lemma~\ref{lem:gen_dec}, in the main body of the proof, we shall prove the estimate
\begin{equation}\label{eq:v_dec2show_2}
\left|v\left(\vx,t\right)\right|\leq \frac{C}{\left(1+t^2\right)^{1/2}}, \hspace{1em}\vx\in\Omega,\hspace{0.5em} t\geq 0,
\end{equation}
for some constant $C>0$, and the estimate of the other terms 
in~\eqref{eq:sol_der_bnds} is discussed at the end. Note that the
    estimate \eqref{eq:v_dec2show_2} can be proven under weaker
    regularity assumptions on $v_0$ and $v_1$ than those in the
    formulation of the present lemma but, on the other hand, it also
    holds true for a more general form of the initial conditions
    (allowing for the presence of faster decaying extra terms). This
    class of initial conditions shall be described in the definition
    of $\mathcal{B}$ given by \eqref{eq:B-set}.


We extend the function $Y$ from the sphere $\mathbb{S}^{d-1}$ to the
    spherical shell (annulus for $d=2$) with the inner and outer radii
    $\frac12$ and $\frac32$, respectively. We choose a constant
    extension in the radial direction. This extension makes $\nabla Y$ well defined on $\mathbb{S}^{d-1}$. Moreover, $\nabla Y(\phi)$ is a tangent vector to $\mathbb{S}^{d-1}$ for each (radial vector)
$\phi\in\mathbb{S}^{d-1}$, and hence
\begin{equation}\label{tang-vector1}
	\phi\cdot \nabla Y(\phi)=0.
\end{equation}

\begin{itemize}
\item{\bf Case $d=2$.}
\end{itemize}

The solution is given explicitly by Poisson's formula  \cite[Par. 2.4.1 (c)]{Evans} 
\begin{align}
v\left(\mathbf{x},t\right)= & \frac{1}{2\pi}\left[\int_{0}^{1}\frac{rt}{\left(1-r^{2}\right)^{1/2}}\int_{\left|\mathbf{s}\right|=1}v_1\left(\mathbf{x}+\mathbf{s}rc_0t\right)d\sigma_{\mathbf{s}}dr\right.\label{eq:v_sol_v0v1}\\
& \left.+\partial_{t}\left(\int_{0}^{1}\frac{rt}{\left(1-r^{2}\right)^{1/2}}\int_{\left|\mathbf{s}\right|=1}v_0\left(\mathbf{x}+\mathbf{s}rc_0t\right)d\sigma_{\mathbf{s}}dr\right)\right].\nonumber
\end{align}
Upon insertion of \eqref{eq:spec_ICs} into \eqref{eq:v_sol_v0v1},
{and setting} $\rho:=\left|\mathbf{x}+\mathbf{s}rc_0t\right|$, $\phi:=\frac{\mathbf{x}+\mathbf{s}rc_0t}{\left|\mathbf{x}+\mathbf{s}rc_0t\right|}$,
we rearrange the terms to obtain  
	\begin{align}\label{eq:v_sol_2D}
	v\left(\mathbf{x},t\right)=&\frac{1}{2\pi}\int_{\left|\mathbf{s}\right|=1}\int_{0}^{1}\frac{rt}{\left(1-r^{2}\right)^{1/2}}\left[\partial_{t}A\left(\rho\right)-c_0A^{\prime}\left(\rho\right)\right]Y\left(\phi\right)drd\sigma_{\mathbf{s}}\\
	& +\frac{1}{2\pi}\int_{\left|\mathbf{s}\right|=1}\int_{0}^{1}\frac{r}{\left(1-r^{2}\right)^{1/2}}A\left(\rho\right)\left[t\partial_{t}Y\left(\phi\right)+Y\left(\phi\right)\right]drd\sigma_{\mathbf{s}}\nonumber.
	\end{align}
{As $A\left(\rho\right)\equiv 0$ for $\rho\leq\rho_0$, then $\phi$ is well-defined whenever it appears {in the above integrands} 
(i.e.~$\rho=\left|\vx+\vs rc_{0}t\right|>\rho_0$ whenever $A\left(\rho\right)\ne 0$).
%
}

Here and in the sequel, we use the following notation to avoid having too many brackets: $\partial_t A(\rho):=\partial_t (A(\rho))$, $\partial_t Y(\phi):=\partial_t (Y(\phi))$, $\nabla Y(\phi):=(\nabla Y)(\phi)$.

Since $\left|\vs\right|=1$, we have 
	\begin{equation}\label{eq:A_drdt}
	\partial_t A\left(\rho\right)=rc_0 A^\prime\left(\rho\right)
        \frac{\vx\cdot\vs+rc_0t}{\left|\vx+\vs rc_0
            t\right|},\hspace{2em}
{r \partial_r A\left(\rho\right) = t \partial_t A\left(\rho\right),}
	\end{equation}
	\begin{equation}\label{eq:Y_dt}
          \begin{split}
	&\partial_{t}Y\left(\phi\right)=\frac{rc_{0}\mathbf{s}\cdot\nabla
          Y\left(\phi\right)}{\left|\mathbf{x}+\mathbf{s}rc_{0}t\right|},
        \qquad
        {r\partial_r Y\left(\phi\right)
          =t\partial_t Y\left(\phi\right)},
        \end{split}
	\end{equation}
where we have used \eqref{tang-vector1}. 
Note that{, using~\eqref{eq:A_drdt},} we can rewrite, for $t>0$,
	\begin{align}\label{eq:A_der_comb}
	\partial_{t}A\left(\rho\right)-c_{0}A^{\prime}\left(\rho\right)=&\frac{1}{t}\left(r-1\right)\partial_{r}A\left(\rho\right)-\frac{1}{t}\left(\frac{\left|\mathbf{x}+\mathbf{s}rc_{0}t\right|}{\mathbf{x}\cdot\mathbf{s}+rc_{0}t}-1\right)\partial_{r}A\left(\rho\right)\nonumber\\
	=&\frac{1}{t}\left(r-1\right)\partial_{r}A\left(\rho\right)+c_0\left(\frac{\vx\cdot\vs+rc_0t}{\left|\vx+\vs rc_0 t\right|}-1\right)A^\prime\left(\rho\right).
	\end{align}
{Note that $\mathbf{x}\cdot\mathbf{s}+rc_{0}t$ cannot be zero on $\supp A$ if $\vx\in\Omega$ {and $\vs\in\mathbb{S}^1$}. Indeed, $\mathbf{x}\cdot\mathbf{s}+rc_{0}t{=0}$ would imply
$$
  \rho^2=|\vx|^2+2rc_0t\vx\cdot\vs+r^2c_0^2t^2=|\vx|^2-r^2c_0^2t^2\le|\vx|^2<\rho_0^2,
$$
and hence $A(\rho)=0$.
}
Plugging \eqref{eq:A_der_comb} into \eqref{eq:v_sol_2D}, we observe
that the term with $\partial_{r}A$ can be integrated by parts {in
  the variable $r$,} due to the cancellation of the singularity at $r=1$. 
	In doing so, both boundary terms at $r=0$ and $r=1$, respectively, vanish.
With some simplifications that employ \eqref{eq:Y_dt}
and the identity
	\[
\left(\frac{r\left(1-r\right)}{\left(1-r^{2}\right)^{1/2}}\right)^{\prime}+\frac{r}{\left(1-r^{2}\right)^{1/2}}=\frac{1}{\left(1-r^{2}\right)^{1/2}\left(1+r\right)},
	\]
we arrive at
\begin{align}\label{eq:v_sol_2D_simpl}
v\left(\mathbf{x},t\right)= & \frac{1}{2\pi}\int_{\left|\mathbf{s}\right|=1}\int_{0}^{1}\frac{1}{\left(1-r^{2}\right)^{1/2}}\left[\frac{1}{1+r}A\left(\rho\right)Y\left(\phi\right)\right.\\
	&\left.+rc_{0}t\left(\frac{\vx\cdot\vs+rc_0t}{\left|\vx+\vs rc_0 t\right|}-1\right)A^{\prime}\left(\rho\right)Y\left(\phi\right)\right]drd\sigma_{\mathbf{s}}+Q\left(\mathbf{x},t\right).\nonumber
	\end{align}
Here, we have set
	\begin{align}\label{eq:Q1_def}
	Q\left(\mathbf{x},t\right):= &  \frac{1}{2\pi}\int_{\left|\mathbf{s}\right|=1}\int_{0}^{1}\frac{rc_{0}t}{\left(1-r^{2}\right)^{1/2}} 
\,\frac{1}{\left|\mathbf{x}+\mathbf{s}rc_{0}t\right|}A\left(\rho\right)\mathbf{s}\cdot\nabla Y\left(\phi\right) 
drd\sigma_{\mathbf{s}} \nonumber \\
	= & -\frac{1}{2\pi}\int_{\left|\mathbf{s}\right|=1}\int_{0}^{1}\frac{A\left(\rho\right)}{\left(1-r^{2}\right)^{1/2}} 
	\,\frac{\mathbf{x}\cdot\nabla Y\left(\phi\right)}{\left|\mathbf{x}+\mathbf{s}rc_{0}t\right|} 
	drd\sigma_{\mathbf{s}} ,
	\end{align}
where we used
\begin{equation}\label{tang-vector2}
	\mathbf{x}\cdot \nabla Y(\phi)=-rc_0t\mathbf{s}\cdot \nabla Y(\phi),
\end{equation}
which follows from \eqref{tang-vector1}.

For later reference, we note that, for $rt\gg 1$, uniformly in $\vx\in\Omega$,
$\vs\in\mathbb{S}^{{1}}$,
\begin{equation}\label{eq:Q1_estim_prelim}
-\frac{\mathbf{x}\cdot\nabla Y\left(\phi\right)}{\left|\mathbf{x}+\mathbf{s}rc_{0}t\right|}=\mathcal{O}\left(\frac{1}{rt}\right),
\end{equation}
where we used $|\mathbf{x}|\le\rho_0$ and
$$
  \rho\ge|\mathbf{s}rc_{0}t|-|\mathbf{x}|\ge rc_0t-\rho_0\ge\frac12 rc_0t\qquad \mbox{ for } rt\ge\frac{2\rho_0}{c_0}.
$$

By setting
\begin{equation}\label{eq:P1_def}
P_{1}\left(\mathbf{x},t\right):=\frac{1}{2\pi}\int_{0}^{1}\frac{1}{\left(1-r^{2}\right)^{1/2}}\int_{\left|\mathbf{s}\right|=1}rc_{0}t\left(\frac{\vx\cdot\vs+rc_0t}{\left|\vx+\vs rc_0 t\right|}-1\right)A^{\prime}\left(\rho\right)Y\left(\phi\right)d\sigma_{\mathbf{s}}dr,
\end{equation}
\begin{equation}\label{eq:P2_def}
P_{2}\left(\mathbf{x},t\right):=\frac{1}{2\pi}\int_{0}^{1}\frac{1}{\left(1-r^{2}\right)^{1/2}\left(1+r\right)}\int_{\left|\mathbf{s}\right|=1}A\left(\rho\right)Y\left(\phi\right)d\sigma_{\mathbf{s}}dr,
\end{equation}
{we can rewrite~\eqref{eq:v_sol_2D_simpl} as}
\begin{equation}\label{eq:v_sol_2D_abbr}
v\left(\mathbf{x},t\right)=P_1\left(\mathbf{x},t\right)+P_2\left(\mathbf{x},t\right)+Q\left(\mathbf{x},t\right).
\end{equation}


{As in the proof of Lemma~\ref{lem:gen_dec}, we shall prove that
  there exists $t_0>0$ such that the bounds
\begin{equation}\label{eq:boundP1P2Q1Q2}
  \left|P_{1}\left(\mathbf{x},t\right)\right|\le\dfrac{\widetilde C_0}{t},\quad
  \left|P_{2}\left(\mathbf{x},t\right)\right|\le\dfrac{\widetilde C_1}{t},\quad
  \left|Q\left(\mathbf{x},t\right)\right|\le\dfrac{\widetilde C_2}{t},\quad
\end{equation}
are valid uniformly in $\vx\in\Omega$ with some constants $\widetilde C_0, \widetilde C_1, \widetilde C_2 
 >0$ for
any $t\ge t_0$. Then, due to the uniform boundedness of the solution $v$ 
on $\Omega\times[0,t_0]$ (see~\eqref{eq:v_sol_v0v1}), the bounds in~\eqref{eq:boundP1P2Q1Q2} imply~\eqref{eq:v_dec2show_2}.
}

The functions $A\left(\rho\right)$ and $A^\prime\left(\rho\right)$ may be different from zero only for $\rho>\rho_0$, i.e. for $rc_{0}t>\left(\left(\mathbf{x}\cdot\mathbf{s}\right)^{2}+\rho_{0}^{2}-\left|\vx\right|^{2}\right)^{1/2}-\mathbf{x}\cdot\mathbf{s}$, obtained from $\rho^2=(rc_0t+\mathbf{x}\cdot\mathbf{s})^2-(\mathbf{x}\cdot\mathbf{s})^2 +|\mathbf{x}|^2>\rho_0^2$. Thus, the integration range in the $r$ variable in each term of \eqref{eq:v_sol_2D_abbr} 
effectively
reduces from $\left(0,1\right)$ to $\left(a_1/t,1\right)$ with $a_1>0$
defined in~\eqref{eq:a1_def}. {With this argument, we are implicity
assuming that $t\ge a_1$. We will actually prove~\eqref{eq:boundP1P2Q1Q2} with $t_0:=2a_1$.}



\bigskip \underline{{Estimate of $P_1$ for $t\ge t_0$}}:
	
Let us introduce 
\begin{equation}\label{eq:F1_def}
F_{1}\left(\mathbf{x},rt\right):=\frac{c_{0}}{2\pi}\int_{\left|\mathbf{s}\right|=1}\left(rt\right)^{5/2}\left(\frac{\mathbf{x}\cdot\mathbf{s}+rc_{0}t}{\left|\mathbf{x}+\mathbf{s}rc_{0}t\right|}-1\right)A^{\prime}\left(\rho\right)Y\left(\phi\right)d\sigma_{\mathbf{s}},
\end{equation}
so that we can write
\begin{align*}
P_{1}\left(\mathbf{x},t\right)= & \int_{a_{1}/t}^{1}\frac{1}{\left(1-r^{2}\right)^{1/2}\left(rt\right)^{3/2}}F_{1}\left(\mathbf{x},rt\right)dr = \int_{a_{1}/t}^{1/2}\dots\phantom{.}+\int_{1/2}^{1}\dots\nonumber\\
=: & P_{1,1}\left(\mathbf{x},t\right)+P_{1,2}\left(\mathbf{x},t\right),\nonumber
\end{align*}
assuming $a_1/t \leq 1/2$, i.e. $t\geq 2a_1$.

Since 
we consider only $\rho=\left|\vx+\vs rc_0 t\right|>\rho_0>0$ (as the integrand of $F_1\left(\vx,rt\right)$ vanishes otherwise),
the denominators in \eqref{eq:near_id_estim2} and \eqref{eq:F1_def}
are bounded away from zero. Moreover, since 
\begin{equation}\label{rho-est}
  rc_0t=\left|\vs   rc_0t\right|\leq \rho+\left|\vx\right|\leq\rho+\rho_0,
\end{equation} 
we have,
uniformly for $\vx\in\Omega$,
$\vs\in\mathbb{S}^{{1}}$,
\begin{align}\label{eq:A_rt_estim}
\left(rt\right)^{1/2}\left|A^\prime\left(\rho\right)\right|\leq&\begin{cases}
\left(\frac{\rho+\rho_0}{c_0}\right)^{1/2}\left\Vert A^\prime\right\Vert_{L^\infty\left(\mathbb{R}_+\right)}, \hspace{1em} 0\leq\rho\leq\rho_1,\\
\left(\frac{rt}{\rho}\right)^{1/2}{\left(\frac{\omega}{c_0}+\frac{1}{2\rho_1}\right)}\leq\left(\frac{\rho+\rho_0}{c_0\rho}\right)^{1/2}{\left(\frac{\omega}{c_0}+\frac{1}{2\rho_1}\right)}, \hspace{1em} \rho>\rho_1,
\end{cases}\\
\leq& C_0,\hspace{1em}\rho>0,\nonumber
\end{align}
for some constant $C_0>0$. This, together with \eqref{eq:near_id_estim2},
implies that
\[
\underset{\mathbf{x}\in\Omega}{\sup}\left\Vert F_{1}\left(\mathbf{x},\cdot\right)\right\Vert _{L^{\infty}\left(a_{1},\infty\right)}=:C_{1}<\infty,
\]
for some constant $C_1>0$.
Therefore, we can estimate, for {$t\ge t_0=2a_1$} {and
  $\vx\in\Omega$},
\[
\left|P_{1,1}\left(\mathbf{x},t\right)\right|\leq 2^{1/2}\frac{C_1}{t^{3/2}}\int_{a_1/t}^{1/2}\frac{dr}{r^{3/2}}<
2^{1/2}\frac{C_1}{t^{3/2}}\int_{a_1/t}^{\infty}\frac{dr}{r^{3/2}}=
\frac{2^{3/2}C_1}{a_1^{1/2}t},
\]
\[
\left|P_{1,2}\left(\mathbf{x},t\right)\right|\leq 2^{3/2}\frac{C_1}{t^{3/2}}\int_{1/2}^{1}\frac{dr}{\left(1-r\right)^{1/2}}=\frac{4C_1}{t^{3/2}},
\]
and thus we have {the bound for $P_1$ in~\eqref{eq:boundP1P2Q1Q2}
  with $t_0=2a_1$ and}
some constant ${\widetilde{C}_0}>0$.

\bigskip \underline{{Estimate of $Q$ for $t\ge t_0$}}:

As above, we note that all the denominators in \eqref{eq:Q1_def} are bounded away from zero. Therefore, by setting 
\begin{align}\label{eq:F2_def}
F_{2}\left(\mathbf{x},rt\right):=&-\frac{\left(rt\right)^{3/2}}{2\pi}\int_{\left|\mathbf{s}\right|=1} 
\frac{\mathbf{x}\cdot\nabla Y\left(\phi\right)}{\left|\mathbf{x}+\mathbf{s}rc_{0}t\right|}
A\left(\rho\right)d\sigma_{\mathbf{s}}, 
\end{align}
and recalling \eqref{eq:Q1_estim_prelim} and~\eqref{rho-est}, we have
\begin{equation}\label{eq:F2_F3_estim}
\underset{\mathbf{x}\in\Omega}{\sup}\left\Vert F_{2}\left(\mathbf{x},\cdot\right)\right\Vert _{L^{\infty}\left(a_{1},\infty\right)}=:C_{2}<\infty.
\end{equation}
Consequently, we estimate as before, for $\vx\in\Omega$, {$t\ge t_0=2a_1$},
\begin{equation*}
\left|Q\left(\mathbf{x},t\right)\right|\leq  2^{1/2}\frac{C_2}{t^{3/2}}\int_{a_1/t}^{1/2}\frac{dr}{r^{3/2}}+2^{3/2}\frac{C_2}{t^{3/2}}\int_{1/2}^{1}\frac{dr}{\left(1-r\right)^{1/2}}<\frac{\widetilde{C}_2}{t},
\end{equation*}
with some constant $\widetilde{C}_2 
>0$.
{This proves the bound for $Q$ 
  in~\eqref{eq:boundP1P2Q1Q2} again
  with $t_0=2a_1$.}

\bigskip \underline{{Estimate of $P_2$ for $t\ge t_0$}}:


{For the term $P_2$, we proceed as in the estimate
  of the term $P$ in Lemma~\ref{lem:gen_dec}.}
Let us rewrite \eqref{eq:P2_def} as
\begin{align}\label{eq:P2_estim_prelim}
P_{2}\left(\mathbf{x},t\right)= & \frac{1}{2\pi}\int_{0}^{1}\frac{e^{ir\omega t}}{\left(1-r\right)^{1/2}}\int_{\left|\mathbf{s}\right|=1}\left[\frac{e^{-ir\omega t}}{\left(1+r\right)^{3/2}}A\left(\rho\right)Y\left(\phi\right)\right.\\
 & \left.-\frac{e^{-i\omega t}}{2^{3/2}}A\left(\left|\mathbf{x}+\mathbf{s}c_{0}t\right|\right)Y\left(\frac{\mathbf{x}+\mathbf{s}c_{0}t}{\left|\mathbf{x}+\mathbf{s}c_{0}t\right|}\right)\right]d\sigma_{\mathbf{s}}dr\nonumber\\
 & +\frac{1}{2^{5/2}\pi}\int_{\left|\mathbf{s}\right|=1}A\left(\left|\mathbf{x}+\mathbf{s}c_{0}t\right|\right)Y\left(\frac{\mathbf{x}+\mathbf{s}c_{0}t}{\left|\mathbf{x}+\mathbf{s}c_{0}t\right|}\right)d\sigma_{\mathbf{s}}\int_{0}^{1}\frac{e^{-i\left(1-r\right)\omega t}}{\left(1-r\right)^{1/2}}dr\nonumber\\
=: & P_{2,1}\left(\mathbf{x},t\right)+P_{2,2}\left(\mathbf{x},t\right).\nonumber
\end{align}
We start with
\[
P_{2,2}\left(\mathbf{x},t\right)=\frac{1}{t^{1/2}}F_{4}\left(\mathbf{x},t\right)\int_{0}^{1}\frac{e^{-ir\omega t}}{r^{1/2}}dr,
\]
where we made a change of variable $r\mapsto\left(1-r\right)$ and introduced
\begin{equation}\label{eq:F4_def}
F_{4}\left(\mathbf{x},t\right):=\frac{t^{1/2}}{2^{5/2}\pi}\int_{\left|\mathbf{s}\right|=1}A\left(\left|\mathbf{x}+\mathbf{s}c_{0}t\right|\right)Y\left(\frac{\mathbf{x}+\mathbf{s}c_{0}t}{\left|\mathbf{x}+\mathbf{s}c_{0}t\right|}\right)d\sigma_{\mathbf{s}}.
\end{equation}
Similarly to \eqref{eq:A_rt_estim}, but with setting $r=1$, we have $t^{1/2}A\left(\left|\vx+\vs c_0 t\right|\right)\leq {C_{00}}$ for some constant ${C_{00}}>0$, and thus
\begin{equation}\label{eq:F4_estim}
\underset{\mathbf{x}\in\Omega}{\sup}\left\Vert F_{4}\left(\mathbf{x},\cdot\right)\right\Vert _{L^{\infty}\left(\mathbb{R}_+\right)}=:C_{4}<\infty.
\end{equation}
Hence, employing Lemma \ref{lem:app_osc_int_estim}, we obtain uniformly for $\vx\in\Omega$ and sufficiently large $t>0$,
\begin{align}\label{eq:P22_estim}
\left|P_{2,2}\left(\mathbf{x},t\right)\right|\leq\frac{\widetilde{C}_4}{t}.
\end{align}

\bigskip \underline{{Decay of $P_{2,1}$}}. 
To deal with $P_{2,1}$, {we note that the integrand is a smooth
function of $r$ in $[0,1)$ and it behaves like $(1-r)^{1/2}$ as $r\to
1$.
Integrating by parts in the $r$ variable with $e^{ir\omega t}dr$ as
differential, both boundary terms vanish (recall also
that $A\left(\left|\vx\right|\right)\equiv 0$ for $\vx\in\Omega$).}
We thus have
\begin{align}\label{eq:P21_def}
P_{2,1}\left(\mathbf{x},t\right)= & -\frac{1}{2\pi i\omega t}\int_{0}^{1}\int_{\left|\mathbf{s}\right|=1}e^{ir\omega t}\partial_{r}\left(\frac{1}{\left(1-r\right)^{1/2}}\left[\frac{e^{-ir\omega t}}{\left(1+r\right)^{3/2}}A\left(\rho\right)Y\left(\phi\right)\right.\right.\\
 & \left.\left.-\frac{e^{-i\omega t}}{2^{3/2}}A\left(\left|\mathbf{x}+\mathbf{s}c_{0}t\right|\right)Y\left(\frac{\mathbf{x}+\mathbf{s}c_{0}t}{\left|\mathbf{x}+\mathbf{s}c_{0}t\right|}\right)\right]\right)d\sigma_{\mathbf{s}}dr\nonumber\\
 =& I_1\left(\mathbf{x},t\right)+I_2\left(\mathbf{x},t\right)+I_3\left(\mathbf{x},t\right)+I_4\left(\mathbf{x},t\right),\nonumber
\end{align}
where
\begin{equation}\label{eq:I1_def}
I_{1}\left(\mathbf{x},t\right):=\frac{i}{2\pi\omega t}\int_{0}^{1}\int_{\left|\mathbf{s}\right|=1}\frac{1}{\left(1-r\right)^{1/2}\left(1+r\right)^{3/2}}Y\left(\phi\right)e^{ir\omega t}\partial_{r}\left(e^{-ir\omega t}A\left(\rho\right)\right)d\sigma_{\mathbf{s}}dr,
\end{equation}
\begin{equation}\label{eq:I2_def}
I_{2}\left(\mathbf{x},t\right):=\frac{i}{2\pi\omega t}\int_{0}^{1}\int_{\left|\mathbf{s}\right|=1}\frac{1}{\left(1-r\right)^{1/2}\left(1+r\right)^{3/2}}A\left(\rho\right)\partial_{r}Y\left(\phi\right)d\sigma_{\mathbf{s}}dr,
\end{equation}
\begin{align}\label{eq:I3_def}
I_{3}\left(\mathbf{x},t\right):=&-\frac{3i}{4\pi\omega t}\int_{0}^{1}\int_{\left|\mathbf{s}\right|=1}\frac{1}{\left(1-r\right)^{1/2}\left(1+r\right)^{5/2}}A\left(\rho\right)Y\left(\phi\right)d\sigma_{\mathbf{s}}dr,
\end{align}
\begin{align}\label{eq:I4_def}
I_{4}\left(\mathbf{x},t\right):=&\frac{i}{4\pi\omega t}\int_{0}^{1}\int_{\left|\mathbf{s}\right|=1}\frac{1}{\left(1-r\right)^{3/2}}\left[\frac{1}{\left(1+r\right)^{3/2}}A\left(\rho\right)Y\left(\phi\right)\right.\\
 &\left.-\frac{e^{-i\left(1-r\right)\omega t}}{2^{3/2}}A\left(\left|\mathbf{x}+\mathbf{s}c_{0}t\right|\right)Y\left(\frac{\mathbf{x}+\mathbf{s}c_{0}t}{\left|\mathbf{x}+\mathbf{s}c_{0}t\right|}\right)\right]d\sigma_{\mathbf{s}}dr.\nonumber
\end{align}

\bigskip \underline{{Decay of $I_1$}}. 
For the term $I_1$ observe that, using \eqref{eq:A_drdt}, we have
\begin{align*}
\frac{r^{3/2}t^{1/2}}{c_0}e^{ir\omega t}\partial_{r}\left(e^{-ir\omega t}A\left(\rho\right)\right)=&\left(rt\right)^{3/2}\left(A^{\prime}\left(\rho\right)-\frac{i\omega}{c_{0}}A\left(\rho\right)\right)\\
&-\left(rt\right)^{3/2}A^{\prime}\left(\rho\right)\left(1-\frac{\mathbf{x}\cdot\mathbf{s}+rc_{0}t}{\left|\mathbf{x}+\mathbf{s}rc_{0}t\right|}\right),
\end{align*}
where both terms on the right-hand side are uniformly bounded for $rt>0$, $\vx\in\Omega$, $\left|\vs\right|=1$, due to \eqref{eq:near_id_estim2}, \eqref{rho-est}, and the assumption on the form of $A$. Therefore, {for}
\begin{equation*}
F_{5}\left(\mathbf{x},rt\right):=\frac{ic_{0}}{2\pi\omega}\int_{\left|\mathbf{s}\right|=1}Y\left(\phi\right)\frac{r^{3/2}t^{1/2}}{c_0}e^{ir\omega t}\partial_{r}\left(e^{-ir\omega t}A\left(\rho\right)\right)d\sigma_{\mathbf{s}},
\end{equation*}
{we have}
\[
\underset{\mathbf{x}\in\Omega}{\sup}\left\Vert F_{5}\left(\mathbf{x},\cdot\right)\right\Vert _{L^{\infty}\left(a_{1},\infty\right)}=:C_{5}<\infty.
\]
Since both $A$, $A^\prime$ vanish on $\left[0,\rho_0\right]$, the
interval of integration for the $r$-integrals in each of \eqref{eq:I1_def}--\eqref{eq:I3_def} reduces to $\left(a_1/t,1\right)$ (see the discussion before \eqref{eq:a1_def}). Hence
we can estimate {$I_1$ in}~\eqref{eq:I1_def} for {$t\ge t_0=2a_1$} as
\begin{align}\label{eq:I1_estim}
\left|I_{1}\left(\mathbf{x},t\right)\right|=&\frac{1}{t^{3/2}}\left|\int_{a_1/t}^{1}\frac{1}{r^{3/2} \left(1-r\right)^{1/2}\left(1+r\right)^{3/2}}F_{5}\left(\mathbf{x},rt\right)dr\right|\\
\leq&\frac{2^{1/2}C_5}{t^{3/2}}\int_{a_1/t}^{1/2}\frac{dr}{r^{3/2}}+\frac{2^{3/2}C_5}{t^{3/2}}\int_{1/2}^{1}\frac{dr}{\left(1-r\right)^{1/2}}
\leq\frac{\widetilde{C}_5}{t}\nonumber
\end{align}
with some constant $\widetilde{C}_5>0$.

\bigskip \underline{{Decay of $I_2$, $I_3$}}. 
In a similar but simpler fashion, we can estimate the terms $I_2$ and $I_3$. To this end we introduce

\begin{equation*}
F_{6}\left(\mathbf{x},rt\right):=-\frac{3i\left(rt\right)^{1/2}}{4\pi\omega}\int_{\left|\mathbf{s}\right|=1}A\left(\rho\right)Y\left(\phi\right)d\sigma_{\mathbf{s}},
\end{equation*}
which satisfies
\[
\left|F_{6}\left(\mathbf{x},rt\right)\right|\leq\frac{3}{4\pi\omega}\int_{\left|\mathbf{s}\right|=1}\left(\frac{rt}{\rho}\right)^{1/2}\rho^{1/2}\left|A\left(\rho\right)\right|\left|Y\left(\phi\right)\right|d\sigma_{\mathbf{s}},
\]
and thus
\begin{equation}\label{eq:F7_def}
\underset{\mathbf{x}\in\Omega}{\sup}\left\Vert F_{6}\left(\mathbf{x},\cdot\right)\right\Vert _{L^{\infty}\left(\mathbb{R}_+\right)}=:C_{6}<\infty.
\end{equation}
Then, using~\eqref{eq:F2_F3_estim} and \eqref{eq:F7_def}, we obtain for {$t\ge t_0=2a_1$},
\begin{align}\label{eq:I2_estim}
\left|I_{2}\left(\mathbf{x},t\right)\right|=&\frac{1}{\omega t^{5/2}}\left|\int_{a_1/t}^{1}\frac{1}{r^{5/2} \left(1-r\right)^{1/2}\left(1+r\right)^{3/2}}F_{2}\left(\mathbf{x},rt\right)dr\right|\\
\leq&\frac{2^{1/2}C_2}{\omega t^{5/2}}\int_{a_1/t}^{1/2}\frac{dr}{r^{5/2}}+\frac{2^{5/2}C_2}{\omega t^{5/2}}\int_{1/2}^{1}\frac{dr}{\left(1-r\right)^{1/2}}
\leq\frac{\bar{C}_2}{t},\nonumber
\end{align}
\begin{align}\label{eq:I3_estim}
\left|I_{3}\left(\mathbf{x},t\right)\right|=&\frac{1}{t^{3/2}}\left|\int_{a_1/t}^{1}\frac{1}{r^{1/2}\left(1-r\right)^{1/2}\left(1+r\right)^{5/2}}F_{6}\left(\mathbf{x},rt\right)dr\right|\\
\leq&\frac{2^{1/2}C_6}{t^{3/2}}\left(\int_{a_1/t}^{1/2}\frac{dr}{r^{1/2}}+\int_{1/2}^{1}\frac{dr}{\left(1-r\right)^{1/2}}\right)
\leq\frac{\widetilde{C}_6}{t^{3/2}}\nonumber
\end{align}
with some constants $\bar{C}_2$, $\widetilde{C}_6>0$. 

\bigskip \underline{{Decay of $I_4$}}. To treat the term $I_4$, we introduce
\begin{align}\label{eq:F8_def_tild}
\widetilde{F}_{7}\left(\mathbf{x},r,t\right):=&\int_{\left|\mathbf{s}\right|=1}\left[\frac{1}{\left(1+r\right)^{3/2}}A\left(\rho\right)Y\left(\phi\right)\right.\\
 &\left.-\frac{e^{-i\left(1-r\right)\omega t}}{2^{3/2}}A\left(\left|\mathbf{x}+\mathbf{s}c_{0}t\right|\right)Y\left(\frac{\mathbf{x}+\mathbf{s}c_{0}t}{\left|\mathbf{x}+\mathbf{s}c_{0}t\right|}\right)\right]d\sigma_{\mathbf{s}},\nonumber
\end{align}
\begin{equation}\label{eq:F8_def}
F_{7}\left(\mathbf{x},r,t\right):=\frac{1}{1-r}\widetilde{F}_{7}\left(\mathbf{x},r,t\right).
\end{equation}
Using $\widetilde{F}_{7}\left(\mathbf{x},1,t\right)=0$, \eqref{eq:F8_def} can be rewritten as
\begin{equation}\label{eq:F8_def_ext}
F_{7}\left(\mathbf{x},r,t\right)=-\frac{1}{1-r}\left(\widetilde{F}_{7}\left(\mathbf{x},1,t\right)-\widetilde{F}_{7}\left(\mathbf{x},r,t\right)\right)=-\frac{1}{1-r}\int_r^1 \partial_r\widetilde{F}_{7}\left(\mathbf{x},\tau,t\right)d\tau.
\end{equation}
From this we estimate for $r\in\left(1-a_1/t,1\right)$,
\begin{align*}
\left|F_{7}\left(\mathbf{x},r,t\right)\right|\leq&\left\Vert\partial_r\widetilde{F}_{7}\left(\mathbf{x},\cdot,t\right)\right\Vert_{L^\infty\left(1-a_1/t,1\right)}\\
\leq& \underset{r\in\left(1-a_1/t,\, 1\right)}{\sup}
\int_{\left|\mathbf{s}\right|=1}\left[\frac{3}{2\left(1+r\right)^{5/2}}\left|A\left(\rho\right)\right|\left|Y\left(\phi\right)\right|+\frac{1}{\left(1+r\right)^{3/2}}\left|\partial_r A\left(\rho\right)\right|\left|Y\left(\phi\right)\right|\right.\nonumber\\
&+\left.\frac{1}{\left(1+r\right)^{3/2}}\left|A\left(\rho\right)\right|\left|\partial_r Y\left(\phi\right)\right|+\frac{\omega t}{2^{3/2}}\left|A\left(\left|\vx+\vs c_0 t\right|\right)\right|\left|Y\left(\frac{\vx+\vs c_0 t}{\left|\vx+\vs c_0 t\right|}\right)\right|\right]d\sigma_{\mathbf{s}}.\nonumber
\end{align*}
Therefore, employing \eqref{rho-est}, \eqref{eq:A_drdt} and \eqref{eq:Y_dt}, {taking into account the behaviour of $A(\rho)$ and $A^\prime(\rho)$ for $\rho>\rho_1$,} we deduce that 
\begin{equation}\label{eq:C8_def}
\underset{\mathbf{x}\in\Omega,\hspace{0.3em}r\in\left(1-a_1/t,\, 1\right),\hspace{0.3em}t>2a_1}{\sup}\left| F_{7}\left(\mathbf{x},r,t\right)/t^{1/2} \right|=:C_{7}<\infty.
\end{equation}
{Moreover,} for $r\in\left(1/2,1-a_1/t\right)$ {and
  $\epsilon\in\left(0,1/2\right]$, taking into
  account~\eqref{eq:F8_def}, we have}
\[
\left|\left(1-r\right)^{1/2+\epsilon}t^{\epsilon}F_{7}\left(\mathbf{x},r,t\right)\right|= \left|\widetilde{F}_7\left(\mathbf{x},r,t\right)\right|\frac{t^\epsilon}{\left(1-r\right)^{1/2-\epsilon}} \leq \left|\widetilde{F}_7\left(\mathbf{x},r,t\right)\right|\frac{t^{1/2}}{a_1^{1/2-\epsilon}}.
\]
Hence, {with $C_4$ and $C_6$ defined in~\eqref{eq:F4_estim}
  and~\eqref{eq:F7_def}, respectively,}
we obtain from~\eqref{eq:F8_def_tild} 
that, for any $\epsilon\in\left(0,1/2\right]$,
\begin{align}\label{eq:C9_def}
\underset{\mathbf{x}\in\Omega,\hspace{0.3em}r\in\left(1/2,\, 1-a_1/t\right),\hspace{0.3em}t>2a_1}{\sup}\left|\left(1-r\right)^{1/2+\epsilon}t^{\epsilon}F_{7}\left(\mathbf{x},r,t\right)\right|\leq& \frac{1}{a_1^{1/2-\epsilon}}\left(\frac{2^{1/2}4\pi\omega}{3}C_{6}+2\pi C_4\right)\\
=:&C_8<\infty.\nonumber
\end{align}
Altogether \eqref{eq:C9_def} and \eqref{eq:C8_def} imply that,
for {$t\ge t_0=2a_1$ and} $\epsilon\in\left(0,1/2\right]$,
\begin{align}\label{eq:I4_estim}
\left|I_{4}\left(\mathbf{x},t\right)\right|\leq&\frac{1}{4\pi\omega t}\int_{0}^{1}\frac{1}{\left(1-r\right)^{1/2}}\left|F_{7}\left(\mathbf{x},r,t\right)\right|dr\\
=&\frac{1}{4\pi\omega t}\left(\int_{0}^{1/2}\ldots+\int_{1/2}^{1-a_1/t}\ldots+\int_{1-a_1/t}^{1}\ldots\right)\nonumber\\
\leq&\frac{2^{3/2}}{t^{3/2}}\left(\frac{C_6}{3}\int_{0}^{1/2}\frac{dr}{r^{1/2}}+\frac{C_4}{4\omega}\right)+\frac{C_8}{4\pi\omega t^{1+\epsilon}}\int_{1/2}^{1-a_1/t}\frac{dr}{\left(1-r\right)^{1+\epsilon}}\nonumber\\
&+\frac{C_7}{4\pi\omega t^{1/2}}\int_{1-a_1/t}^{1}\frac{dr}{\left(1-r\right)^{1/2}}
\leq\frac{\widetilde{C}_7}{t}\nonumber
\end{align}
with some constant $\widetilde{C}_7>0$. 
Here, for the interval $\left(0,1/2\right)$, we have estimated the
integrand of $F_7$ directly, 
using again \eqref{eq:F7_def} and \eqref{eq:F4_estim}.

{From estimates~\eqref{eq:I1_estim}, \eqref{eq:I2_estim},
  \eqref{eq:I3_estim}, and \eqref{eq:I4_estim}, of $I_1$, $I_2$,
  $I_3$, and $I_4$, respectively, in decomposition~\eqref{eq:P21_def},
we obtain for $P_{2,1}$ the same estimate as~\eqref{eq:P22_estim} for
$P_{2,2}$. 

The estimate for $P_2$ in~\eqref{eq:boundP1P2Q1Q2} with $t_0=2a_1$ readily
follows from~\eqref{eq:P2_estim_prelim}. This completes the proof of~\eqref{eq:v_dec2show_2} in the case $d=2$.}

\begin{itemize}
\item{\bf Case $d=3$.}
\end{itemize}

In this case, the solution is given by Kirchhoff's formula  \cite[Par. 2.4.1 (c)]{Evans} 
\begin{equation}\label{eq:v_sol_Kirch_2}
v\left(\mathbf{x},t\right)=\frac{t}{4\pi}\int_{\left|\mathbf{s}\right|=1}\left[v_1\left(\mathbf{x}+\mathbf{s}c_0 t\right)+\partial_t v_0\left(\mathbf{x}+\mathbf{s}c_0 t\right)+\frac{1}{t}v_0\left(\mathbf{x}+\mathbf{s}c_0 t\right)\right]d\sigma_{\mathbf{s}}.
\end{equation}
The assumed form of the initial conditions yields
\begin{align}\label{eq:v_sol_Kirch_2_simpl}
v\left(\mathbf{x},t\right)=&\frac{t}{4\pi}\int_{\left|\mathbf{s}\right|=1}\left[\left(\frac{\vx\cdot\vs+c_0 t}{\left|\vx+\vs c_0 t\right|}-1\right) c_0 A^\prime\left(|\mathbf{x}+\mathbf{s}c_0 t|\right)Y\left(\frac{\mathbf{x}+\mathbf{s}c_0 t}{\left|\mathbf{x}+\mathbf{s}c_0 t\right|}\right)\right.\\
&\left.+A\left(|\mathbf{x}+\mathbf{s}c_0 t|\right)\partial_t Y\left(\frac{\mathbf{x}+\mathbf{s}c_0 t}{\left|\mathbf{x}+\mathbf{s}c_0 t\right|}\right) 
+\frac{1}{t}A\left(|\mathbf{x}+\mathbf{s}c_0 t|\right)Y\left(\frac{\mathbf{x}+\mathbf{s}c_0 t}{\left|\mathbf{x}+\mathbf{s}c_0 t\right|}\right)\right]d\sigma_{\mathbf{s}}.\nonumber
\end{align}
{For} $d=3$, {both} $A\left(|\mathbf{x}+\mathbf{s}c_0
  t|\right)$ {and} $A^\prime\left(|\mathbf{x}+\mathbf{s}c_0 t|\right)$ {are}
$\mathcal{O}\left(1/t\right)$ for $t\gg 1$ uniformly for $\vx\in\Omega$, $\vs\in\mathbb{S}^2$ (due to \eqref{rho-est} with $r=1$). {Hence, the last term in the integrand is $\mathcal{O}\left(1/t^2\right)$ and employing \eqref{eq:near_id_estim2} and \eqref{eq:Y_dt} (both for $r=1$), we observe that the first and the second terms are $\mathcal{O}\left(1/t^3\right)$ and $\mathcal{O}\left(1/t^2\right)$, respectively. Therefore, the whole integrand is $\mathcal{O}\left(1/t^2\right)$.} The estimate \eqref{eq:v_dec2show_2} hence follows. 

\begin{itemize}
\item{\bf Conclusion of the proof.}
\end{itemize}
Given a pair of functions $\left(w_0,w_1\right)$ and constants $\omega$, $c_0$, $\rho_0>0$, $\rho_1>\rho_0$, we say that 
\begin{equation}\label{eq:B-set}
\left(w_0,w_1\right)\in\mathcal{B},
\end{equation}
if we can write
\begin{equation}\label{eq:w0_def}
w_0(\vx)=A_w(|\vx|)Y_w\left(\frac{\vx}{|\vx|}\right)+V_w^{\left(1\right)}(\vx),
\end{equation}
\begin{equation}\label{eq:w1_def}
w_1(\vx)=-c_0 A_w^\prime(|\vx|)Y_w\left(\frac{\vx}{|\vx|}\right)+V_w^{\left(2\right)}(\vx)
\end{equation}
for some functions 
\begin{equation}\label{def-w1}
A_w\in C^1(\mathbb{R}_{+}),\quad
Y_w\in C^1(\mathbb{S}^{d-1}),\quad
V_w^{\left(1\right)}\in C^1(\mathbb{R}^d),\quad
V_w^{\left(2\right)}\in C(\mathbb{R}^d)
\end{equation}
such that
\[
A_w(\left|\vx\right|)=V_w^{(1)}(\vx)=V_w^{(2)}\left(\vx\right)\equiv 0,\quad |\vx|\leq\rho_0, \qquad A_w(\rho)=\dfrac{e^{i\frac{\omega}{c_0}\rho}}{\rho^{\frac{d-1}{2}}},\quad \rho>\rho_1,
\]
and we have
\[
|\vx|^{{\frac{d+3}{2}}}\left(|V_w^{\left(1\right)}(\vx)|+|V_w^{\left(2\right)}(\vx)|+|\nabla V_w^{\left(1\right)}(\vx)|\right)\leq C,\quad \vx\in\mathbb{R}^d,
\]
for some constant $C>0$. 

Let {$Z\left[w_{0},w_{1}\right]\equiv Z_w$ denote} the solution of
the wave equation $\partial_{t}^{2}Z_w\left(\vx,t\right)- \linebreak c_{0}^{2}\Delta Z_w\left(\vx,t\right)=0$
for $\vx\in\mathbb{R}^{d}$, $t>0$, subject to the initial conditions
$Z_w\left(\vx,0\right)=w_{0}\left(\vx\right)$, $\partial_{t}Z_w\left(\vx,0\right)=w_{1}\left(\vx\right)$.
By linearity, we have
\begin{equation}\label{eq:Zw_decomp}
Z_w=Z_{AY}+Z_V,
\end{equation}
where the first term corresponds to the solution produced by the $A_{w}Y_{w}$
terms whereas the second one is due to the $V_w^{\left(1\right)},$
$V_w^{\left(2\right)}$ terms in \eqref{eq:w0_def}, \eqref{eq:w1_def}.

Note that, in the proof of the present lemma, we have already shown the decay 
\[
\left|Z_{AY}\left(\vx,t\right)\right|\leq\frac{C}{\left(1+t^{2}\right)^{1/2}},\hspace{1em}t\geq0,\hspace{1em}\vx\in\Omega,
\]
since the regularity of $A$ and $Y$ in \eqref{def-w1} was sufficient for this decay. 
An analogous time-decay estimate holds for
the $Z_{V}$ term in \eqref{eq:Zw_decomp}, as follows from Lemma \ref{lem:Y0Y1_zero}.
Consequently, we obtain
\begin{equation*}
\left|Z_w\left(\vx,t\right)\right|\leq\frac{C}{\left(1+t^{2}\right)^{1/2}},\hspace{1em}t\geq0,\hspace{1em}\vx\in\Omega.
\end{equation*}

In other words, class \eqref{eq:B-set} consists of the initial conditions with somewhat minimal assumptions for which we can deduce the $\mathcal{O}\left(1/t\right)$ decay of the solution (but not necessarily of its derivatives).

Now we consider $Z\left[v_{0},v_{1}\right]\equiv: Z$. Clearly, we have $\left(v_{0},v_{1}\right)\in\mathcal{B}$ with $V_w^{(1)}=V_w^{(2)}\equiv0$ and thus
\begin{equation}\label{eq:Z_dec}
\left|Z\left(\vx,t\right)\right|\leq\frac{C}{\left(1+t^{2}\right)^{1/2}},\hspace{1em}t\geq0,\hspace{1em}\vx\in\Omega.
\end{equation}
We shall deduce similar results for $\Delta Z$ and $\Delta^2 Z$.
To this effect, we first compute, for $\rho>\rho_{1}$,
\[
A^{\prime}\left(\rho\right)=\left(\frac{i\omega}{c_{0}}-\frac{d-1}{2\rho}\right)\frac{e^{i\frac{\omega}{c_{0}}\rho}}{\rho^{\frac{d-1}{2}}},
\]
\[
A^{\prime\prime}\left(\rho\right)=-\left(\frac{\omega^{2}}{c_{0}^{2}}+i\frac{\omega}{c_{0}}\frac{d-1}{\rho}\right)\frac{e^{i\frac{\omega}{c_{0}}\rho}}{\rho^{\frac{d-1}{2}}}+\mathcal{O}\left(\frac{1}{\rho^{\frac{d+3}{2}}}\right),
\]
\[
A^{\prime\prime\prime}\left(\rho\right)=-\frac{\omega^{2}}{c_{0}^{2}}\left(i\frac{\omega}{c_{0}}-\frac{3\left(d-1\right)}{2\rho}\right)\frac{e^{i\frac{\omega}{c_{0}}\rho}}{\rho^{\frac{d-1}{2}}}+\mathcal{O}\left(\frac{1}{\rho^{\frac{d+3}{2}}}\right).
\]
Therefore, we have for $\left|\vx\right|>\rho_{1}$ (with a computation in polar/spherical coordinates):

\begin{align}
\Delta v_{0}\left(\vx\right) & =A^{\prime\prime}\left(\left|\vx\right|\right)Y\left(\frac{\vx}{\left|\vx\right|}\right)+\frac{d-1}{\left|\vx\right|}A^{\prime}\left(\left|\vx\right|\right)Y\left(\frac{\vx}{\left|\vx\right|}\right)+\frac{1}{\left|\vx\right|^2}A\left(\left|\vx\right|\right)\Delta_{\mathbb{S}^{d-1}}Y\left(\frac{\vx}{\left|\vx\right|}\right)\label{eq:Lapl_w0}\\
 & =-\frac{\omega^{2}}{c_{0}^{2}}\frac{e^{i\frac{\omega}{c_{0}}\left|\vx\right|}}{\left|\vx\right|^{\frac{d-1}{2}}}Y\left(\frac{\vx}{\left|\vx\right|}\right)+\mathcal{O}\left(\frac{1}{\left|\vx\right|^{\frac{d+3}{2}}}\right),\nonumber 
\end{align}
\begin{align}
-\frac{1}{c_{0}}\Delta v_{1}\left(\vx\right) & =A^{\prime\prime\prime}\left(\left|\vx\right|\right)Y\left(\frac{\vx}{\left|\vx\right|}\right)+\frac{d-1}{\left|\vx\right|}A^{\prime\prime}\left(\left|\vx\right|\right)Y\left(\frac{\vx}{\left|\vx\right|}\right)+\frac{1}{\left|\vx\right|^2}A^{\prime}\left(\left|\vx\right|\right)\Delta_{\mathbb{S}^{d-1}}Y\left(\frac{\vx}{\left|\vx\right|}\right)\label{eq:Lapl_w1}\\
 & =-\frac{\omega^{2}}{c_{0}^{2}}\left(i\frac{\omega}{c_{0}}-\frac{d-1}{2\left|\vx\right|}\right)\frac{e^{i\frac{\omega}{c_{0}}\left|\vx\right|}}{\left|\vx\right|^{\frac{d-1}{2}}}Y\left(\frac{\vx}{\left|\vx\right|}\right)+\mathcal{O}\left(\frac{1}{\left|\vx\right|^{\frac{d+3}{2}}}\right),\nonumber 
\end{align}
where $\Delta_{\mathbb{S}^{d-1}}$ is the Laplace-Beltrami operator
on the $\left(d-1\right)$-dimensional unit sphere.\\
Note that, due to cancellation, we do not have any $\mathcal{O}\left(1/\left|\vx\right|^{\frac{d+1}{2}}\right)$
terms in (\ref{eq:Lapl_w0}). 
Also, observe
that we can rewrite these quantities as
\[
\Delta v_{0}\left(\vx\right)=-\left(\frac{\omega}{c_{0}}\right)^{2}A\left(\left|\vx\right|\right)Y\left(\frac{\vx}{\left|\vx\right|}\right)+\mathcal{O}\left(\frac{1}{\left|\vx\right|^{\frac{d+3}{2}}}\right),
\]
\[
\Delta v_{1}\left(\vx\right)=\left(\frac{\omega}{c_{0}}\right)^{2}c_{0}A^{\prime}\left(\left|\vx\right|\right)Y\left(\frac{\vx}{\left|\vx\right|}\right)+\mathcal{O}\left(\frac{1}{\left|\vx\right|^{\frac{d+3}{2}}}\right),
\]
respectively. This, together with the regularity assumptions of this lemma, yields that $\left(\Delta v_{0},\Delta v_{1}\right)\in\mathcal{B}$ (with
$Y_{w}=-\left(\omega/c_0\right)^2 Y$ and some $V_w^{(1)}$, $V_w^{(2)}$ which are no longer identically zero). Since $\Delta Z \left[v_{0}, v_{1}\right] = Z\left[\Delta v_{0},\Delta v_{1}\right]$ {(implied by {$Z\in C^6\left(\mR^d\times\mR_+\right)$ due to $v_0\in C^7\left(\mR^d\right)$ and $v_1\in C^6\left(\mR^d\right)$, as discussed in an analogous setting in the } 
Conclusion of the proof of Lemma~\ref{lem:gen_dec})}, we obtain
\begin{equation}\label{eq:Z_Lapl_dec}
\left|\Delta Z\left(\vx,t\right)\right|\leq\frac{C}{\left(1+t^{2}\right)^{1/2}},\hspace{1em}t\geq0,\hspace{1em}\vx\in\Omega.
\end{equation}
By iteration we also deduce that $\left(\Delta^{2}v_{0},\Delta^{2}v_{1}\right)\in\mathcal{B}$ and, {{using again the regularity} $Z\in C^6\left(\mR^d\times\mR_+\right)$, {that}}
$\Delta^{2}Z=Z\left[\Delta^{2}v_{0},\Delta^{2}v_{1}\right]$.
{Consequently,}
\begin{equation}
\left|\Delta^{2}Z\left(\vx,t\right)\right|=\frac{1}{c_{0}^{2}}\left|\partial_{t}^{2}\Delta Z\left(\vx,t\right)\right|=\frac{1}{c_{0}^{4}}\left|\partial_{t}^{4}Z\left(\vx,t\right)\right|\leq\frac{C}{\left(1+t^{2}\right)^{1/2}},\hspace{1em}t\geq0,\hspace{1em}\vx\in\Omega.\label{eq:Z_Lapl2_dt4_dec}
\end{equation}
Employing the interpolation argument as at the final stage of the proof of Lemma~\ref{lem:gen_dec}, we use (\ref{eq:Z_dec}),  \eqref{eq:Z_Lapl_dec}, and (\ref{eq:Z_Lapl2_dt4_dec})
to obtain the bounds for all the intermediate derivatives in space
and time, thus deducing \eqref{eq:sol_der_bnds}.
\qed

\section{Conclusions and outlook}
\label{sec:conc}
{Motivated by the development of time-domain methods for the 
numerical solution of Helmholtz problems with variable
           coefficients, we have established {a} rigorous proof of the
           LAP under physically reasonable assumptions on the
           coefficients of the wave equation and the source
           term. Under an appropriate modification, the LAP has been extended to 1D.
Moreover, since the speed of stabilisation towards the harmonic regime
           is a deciding factor for using time-domain approaches in
           practice, we have also provided rigorous estimates for this
           convergence in time. 

Our main focus was on the 1D and 2D cases for which the LAP was
           generally understudied previously. In these cases,
           exponential (for 1D) and algebraic (for 2D) convergence
           rates are generally sharp. In the 3D case, previous works
           on wave equations of similar form and some of our numerical
           experiments (for radial data, see Appendix~\ref{sec:app2}) seem to suggest that our
           algebraic convergence result could be improved to the
           exponential one.

An interesting extension of our results would be to remove the non-trapping assumption on the coefficients. Even though the LAP is still expected to be valid, in this case, the time convergence rate would be much slower. Namely, \cite[Thm. 3]{Shapiro} suggests that the convergence rate $1/\left(1+t\right)$ would be replaced by $1/\left[\log\left(2+t\right)\right]^\gamma$ for some $\gamma>0$.   
}

{
\section{Acknowledgements}
This work was supported by the Austrian Science Fund (FWF) projects
           \href{https://doi.org/10.55776/F65}{10.55776/F65} (AA, SG, IP),
           \href{https://doi.org/10.55776/P33477}{10.55776/P33477} (IP), and
           \href{https://doi.org/10.55776/I3538}{10.55776/I3538} (AA, DP).

We would like to express our gratitude to the anonymous reviewer whose suggestions have helped improving the manuscript.
}


\appendix
\section{}  \label{sec:app}
We collect here some technical estimates needed in the proofs of Sections \ref{sec:LAP_proof}--\ref{sec:proofs}.
\begin{lem}\label{lem:K_estims}
For $\vy\in\Omega$, a bounded domain $\Omega\subset\mathbb{R}^d$, $d\geq2$, and $K$ defined by~\eqref{eq:K_def}, the following asymptotic expansions are valid for $\left|\vx\right|\gg 1$:
\begin{align}\label{eq:K_estim}
K\left(\vx-\vy\right)= &\frac{1}{4\pi}\left(\frac{\omega}{2\pi i
                         c_0}\right)^\frac{d-3}{2}
                         \frac{e^{i\frac{\omega}{c_{0}}\left(\left|\vx\right|-\frac{\vx\cdot\vy}{\left|\vx\right|}\right)}}{\left|\vx\right|^{\frac{d-1}{2}}}\left[1+\frac{1}{\left|\vx\right|}\left(
\left(d-3\right)\left(d-1\right)\frac{ic_0}{8\omega}
                         \right.\right.\\
&\left.\left.+\frac{d-1}{2}\frac{\vx\cdot\vy}{\left|\vx\right|}+\frac{i\omega}{2c_{0}}\frac{\left|\vx\right|^2\left|\vy\right|^{2}-\left(\vx \cdot\vy\right)^{2}}{\left|\vx\right|^2}\right)\right]+\mathcal{O}\left(\frac{1}{\left|\vx\right|^{\frac{d+3}{2}}}\right),\nonumber
\end{align}
\begin{align}\label{eq:K_der_estim}
\partial_{\left|\vx\right|}K\left(\vx-\vy\right)= & \frac{1}{4\pi}\left(\frac{\omega}{2\pi i c_0}\right)^\frac{d-3}{2}\frac{e^{i\frac{\omega}{c_{0}}\left(\left|\vx\right|-\frac{\vx\cdot\vy}{\left|\vx\right|}\right)}}{\left|\vx\right|^{\frac{d-1}{2}}}\left[i\frac{\omega}{c_{0}}-\frac{1}{\left|\vx\right|}\left(\frac{\omega^{2}}{2c_{0}^{2}}\frac{\left|\vx\right|^2\left|\vy\right|^{2}-\left(\vx \cdot\vy\right)^{2}}{\left|\vx\right|^2}\right.\right. \\
                                                  &\left.\left.
-\frac{d-1}{2}\frac{i\omega}{c_0}
\frac{\vx\cdot\vy}{\left|\vx\right|}+\frac{d^2-1}{8}\right)\right]+\mathcal{O}\left(\frac{1}{\left|\vx\right|^{\frac{d+3}{2}}}\right)\nonumber.
\end{align}
\end{lem}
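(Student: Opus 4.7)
My plan is to derive both asymptotic expansions from the classical large-argument expansion of the Hankel function combined with a Taylor expansion of $|\vx-\vy|$ in inverse powers of $|\vx|$. The workhorse will be
\begin{equation*}
H_\nu^{(1)}(z) = \sqrt{\tfrac{2}{\pi z}}\, e^{i(z - \nu\pi/2 - \pi/4)}\left[1 + \tfrac{i(4\nu^2-1)}{8z} + \mathcal{O}(1/z^2)\right], \quad z\to\infty,
\end{equation*}
which I will apply with $\nu = (d-2)/2$ for~\eqref{eq:K_estim} (so that $4\nu^2-1 = (d-1)(d-3)$) and with $\nu = d/2$ for~\eqref{eq:K_der_estim} (so that $4\nu^2-1 = d^2-1$).

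For~\eqref{eq:K_estim}, I will insert this expansion into the definition~\eqref{eq:K_def} of $K$. Using $i\,e^{-i(d-1)\pi/4} = e^{-i(d-3)\pi/4}$, the absolute prefactor simplifies to
\begin{equation*}
\frac{i}{4}\left(\frac{\omega}{2\pi c_0}\right)^{\frac{d-2}{2}}\sqrt{\frac{2c_0}{\pi\omega}}\,e^{-i(d-1)\pi/4} = \frac{1}{4\pi}\left(\frac{\omega}{2\pi i c_0}\right)^{\frac{d-3}{2}},
\end{equation*}
matching the leading coefficient of~\eqref{eq:K_estim}. I will then expand, to second order in $1/|\vx|$,
\begin{align*}
|\vx-\vy| &= |\vx| - \frac{\vx\cdot\vy}{|\vx|} + \frac{1}{2|\vx|}\left(|\vy|^2 - \frac{(\vx\cdot\vy)^2}{|\vx|^2}\right) + \mathcal{O}(1/|\vx|^2),\\
|\vx-\vy|^{-(d-1)/2} &= |\vx|^{-(d-1)/2}\left[1 + \frac{d-1}{2|\vx|}\frac{\vx\cdot\vy}{|\vx|} + \mathcal{O}(1/|\vx|^2)\right];
\end{align*}
exponentiating the first line and multiplying out the three resulting expansions (exponential phase, algebraic decay, and Hankel bracket) will produce exactly the three contributions of order $1/|\vx|$ displayed in~\eqref{eq:K_estim}.

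For~\eqref{eq:K_der_estim}, the cleanest route will be to first express the radial derivative via the Bessel identity $\frac{d}{dz}\left[z^{-\nu}H_\nu^{(1)}(z)\right] = -z^{-\nu}H_{\nu+1}^{(1)}(z)$. With $z = \frac{\omega}{c_0}|\vx|$ and $\nu = (d-2)/2$, this yields
\begin{equation*}
\partial_{|\vx|}K(\vx) = -\frac{i\omega}{4 c_0}\left(\frac{\omega}{2\pi c_0}\right)^{\frac{d-2}{2}} \frac{H_{d/2}^{(1)}(\frac{\omega}{c_0}|\vx|)}{|\vx|^{(d-2)/2}},
\end{equation*}
so I can reuse the Hankel expansion with $\nu = d/2$. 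The phase becomes $e^{-i(d+1)\pi/4}$, which combines with the overall $-i\omega/c_0$ to give the new leading coefficient $\frac{i\omega}{c_0}\cdot\frac{1}{4\pi}\left(\frac{\omega}{2\pi i c_0}\right)^{\frac{d-3}{2}}$; the $1/|\vx|$ Hankel correction contributes $(i\omega/c_0)\cdot i(d^2-1)c_0/(8\omega|\vx|) = -(d^2-1)/(8|\vx|)$. To pass from the radial derivative $\tilde K'(|\vx-\vy|)$ to $\partial_{|\vx|}[K(\vx-\vy)]$, I will use the chain rule $\partial_{|\vx|}[K(\vx-\vy)] = \tilde K'(|\vx-\vy|)\cdot\frac{\vx\cdot(\vx-\vy)}{|\vx|\,|\vx-\vy|}$; a direct expansion shows the geometric factor equals $1+\mathcal{O}(1/|\vx|^2)$, so it affects only the remainder, and the same Taylor expansions applied to $\tilde K'$ gather the required terms.

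The main difficulty will be purely bookkeeping: at order $1/|\vx|$ each of the two expansions receives three distinct contributions (from the phase, the algebraic decay factor, and the Hankel correction) which must be tracked carefully and simplified to match the stated forms. No further subtlety arises because $\vy$ ranges in a bounded set, so every remainder is uniform in $\vy\in\Omega$.
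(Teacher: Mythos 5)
Your proposal is correct and follows essentially the same route as the paper: the large-argument Hankel expansion combined with Taylor expansions of the phase $e^{i\frac{\omega}{c_0}|\vx-\vy|}$, the algebraic factor $|\vx-\vy|^{-\gamma}$, and the geometric factor $\partial_{|\vx|}|\vx-\vy|$, all uniform in $\vy\in\Omega$. The only (harmless) variation is in \eqref{eq:K_der_estim}, where you shift the Hankel order to $d/2$ via the recurrence $\frac{d}{dz}\bigl[z^{-\nu}H_\nu^{(1)}(z)\bigr]=-z^{-\nu}H_{\nu+1}^{(1)}(z)$, whereas the paper differentiates $\widetilde K$ directly and uses the asymptotics of $\bigl(H^{(1)}_{(d-2)/2}\bigr)'$; both yield the same coefficient $\frac{d^2-1}{8}$ since $4(d/2)^2-1=d^2-1$.
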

\begin{proof}
Setting
\begin{equation}\label{eq:K_tilde}
\widetilde{K}\left(\vx-\vy\right):=\frac{1}{\left|\vx-\vy\right|^{\frac{d-2}{2}}}H_{\frac{d-2}{2}}^{\left(1\right)}\left(\frac{\omega}{c_{0}}\left|\vx-\vy\right|\right),
\end{equation}
we have
\begin{align}\label{eq:K_tilde_der}
\partial_{\left|\vx\right|}\widetilde{K}\left(\vx-\vy\right)=&\frac{\vx}{\left|\vx\right|}\cdot\nabla\widetilde{K}\left(\vx-\vy\right)=\frac{\left|\vx\right|^{2}-\vx\cdot\vy}{\left|\vx\right|}\frac{1}{\left|\vx-\vy\right|^{\frac{d}{2}}}\left[\frac{\omega}{c_{0}}\left(H_{\frac{d-2}{2}}^{\left(1\right)}\right)^{\prime}\left(\frac{\omega}{c_{0}}\left|\vx-\vy\right|\right)\right.\\&\left.+\left(1-\frac{d}{2}\right)\frac{1}{\left|\vx-\vy\right|}H_{\frac{d-2}{2}}^{\left(1\right)}\left(\frac{\omega}{c_{0}}\left|\vx-\vy\right|\right)\right]\nonumber.
\end{align}
Using the asymptotic behavior of $H_p^{\left(1\right)}$ for large arguments \cite[Sect. 10.17(i--ii)]{Olver}
	\begin{equation*}
	H_{p}^{\left(1\right)}\left(x\right)=\left(\frac{2}{\pi}\right)^{\frac{1}{2}}e^{i\left(x-\frac{2p+1}{4}\pi\right)}\left(\frac{1}{x^{1/2}}+\frac{i\left(4p^2-1\right)}{8x^{3/2}}\right)+\mathcal{O}\left(\frac{1}{x^{5/2}}\right),\label{eq:Hank_asympt}
	\end{equation*}
	\begin{equation*}
	\frac{d}{dx}H_{p}^{\left(1\right)}\left(x\right)=\left(\frac{2}{\pi}\right)^{\frac{1}{2}}e^{i\left(x-\frac{2p+1}{4}\pi\right)}\left(\frac{i}{x^{1/2}}-\frac{4p^2+3}{8x^{3/2}}\right)+\mathcal{O}\left(\frac{1}{x^{5/2}}\right),\hspace{1em}x\gg1,\label{eq:Hank_der_asympt}
	\end{equation*}
we can write~\eqref{eq:K_tilde} and~\eqref{eq:K_tilde_der}, respectively, as
\begin{align}\label{eq:K_tilde_estim1}
\widetilde{K}\left(\vx-\vy\right)=&\left(\frac{2c_{0}}{\pi\omega}\right)^{1/2}\frac{e^{-i\left(d-1\right)\pi/4}}{\left|\vx-\vy\right|^{\frac{d-1}{2}}}e^{i\frac{\omega}{c_{0}}\left|\vx-\vy\right|}\left[1+\frac{ic_{0}}{8\omega}\left(d-3\right)\left(d-1\right)\frac{1}{\left|\vx-\vy\right|}\right]\\
&+\mathcal{O}\left(\frac{1}{\left|\vx-\vy\right|^{\frac{d+3}{2}}}\right),\nonumber
\end{align}
\begin{align}\label{eq:K_tilde_der_estim1}
\partial_{\left|\vx\right|}\widetilde{K}\left(\vx-\vy\right)=&\left(\frac{2c_{0}}{\pi\omega}\right)^{1/2}\frac{\left|\vx\right|^{2}-\vx\cdot\vy}{\left|\vx\right|}\frac{e^{-i\left(d-1\right)\pi/4}}{\left|\vx-\vy\right|^{\frac{d+1}{2}}}e^{i\frac{\omega}{c_{0}}\left|\vx-\vy\right|}\\
&\times\left[i\frac{\omega}{c_{0}}+\left(1-\frac{d}{2}-\frac{\left(d-2\right)^{2}+3}{8}\right)\frac{1}{\left|\vx-\vy\right|}\right]+\mathcal{O}\left(\frac{1}{\left|\vx-\vy\right|^{\frac{d+5}{2}}}\right).\nonumber
\end{align}
By using the identity \[
\left|\vx-\vy\right|=\left|\vx\right|\left(1-2\frac{\vx\cdot\vy}{\left|\vx\right|^2}+\frac{\left|\vy\right|^2}{\left|\vx\right|^2}\right)^{1/2},
\]
and the Taylor expansion of $\left(1+z\right)^{-\gamma/2}$ with $z:=-2\frac{\vx\cdot\vy}{\left|\vx\right|^2}+\frac{\left|\vy\right|^2}{\left|\vx\right|^2}$ about $z=0$,
we obtain, for $\left|\vx\right|\gg 1$,
\begin{equation*}
\frac{1}{\left|\vx-\vy\right|^{\gamma}}=\frac{1}{\left|\vx\right|^{\gamma}}\left(1+\gamma\frac{\vx\cdot\vy}{\left|\vx\right|^{2}}\right)+\mathcal{O}\left(\frac{1}{\left|\vx\right|^{\gamma+2}}\right).
\end{equation*}
We will use this formula with $\gamma=\frac{d-1}{2}$, $\frac{d+1}{2}$, $\frac{d+3}{2}$.
Moreover, using the Taylor expansions of $\left(1+z_1\right)^{1/2}$ and $\exp\left(z_2\right)$ with $z_1:=-2\frac{\vx\cdot\vy}{\left|\vx\right|^2}+\frac{\left|\vy\right|^2}{\left|\vx\right|^2}$ and $z_2:=i\frac{\omega}{c_{0}}\left|\vx\right|\left[\left(1-\frac{2\vx\cdot\vy}{\left|\vx\right|^{2}}+\frac{\left|\vy\right|^{2}}{\left|\vx\right|^{2}}\right)^{1/2}-1+\frac{\vx\cdot\vy}{\left|\vx\right|^{2}}\right]$ about $z_1=z_2=0$, we obtain
\begin{align*}
e^{i\frac{\omega}{c_{0}}\left|\vx-\vy\right|}&=e^{i\frac{\omega}{c_{0}}\left(\left|\vx\right|-\frac{\vx\cdot\vy}{\left|\vx\right|}\right)}e^{i\frac{\omega}{c_{0}}\left|\vx\right|\left[\left(1-\frac{2\vx\cdot\vy}{\left|\vx\right|^{2}}+\frac{\left|\vy\right|^{2}}{\left|\vx\right|^{2}}\right)^{1/2}-1+\frac{\vx\cdot\vy}{\left|\vx\right|^{2}}\right]}\\&=e^{i\frac{\omega}{c_{0}}\left(\left|\vx\right|-\frac{\vx\cdot\vy}{\left|\vx\right|}\right)}\left(1+i\frac{\omega}{c_{0}}\left|\vx\right|\left[\left(1-\frac{2\vx\cdot\vy}{\left|\vx\right|^{2}}+\frac{\left|\vy\right|^{2}}{\left|\vx\right|^{2}}\right)^{1/2}-1+\frac{\vx\cdot\vy}{\left|\vx\right|^{2}}\right]\right)+\mathcal{O}\left(\frac{1}{\left|\vx\right|^{2}}\right)\\&=e^{i\frac{\omega}{c_{0}}\left(\left|\vx\right|-\frac{\vx\cdot\vy}{\left|\vx\right|}\right)}\left(1+i\frac{\omega}{2c_{0}}\frac{\left|\vx\right|^{2}\left|\vy\right|^{2}-\left(\vx\cdot\vy\right)^{2}}{\left|\vx\right|^{3}}\right)+\mathcal{O}\left(\frac{1}{\left|\vx\right|^{2}}\right).
\end{align*}
Therefore, we get from~\eqref{eq:K_tilde_estim1} and~\eqref{eq:K_tilde_der_estim1}
\begin{align}\label{eq:K_tilde_estim2}
\widetilde{K}\left(\vx-\vy\right)=&\left(\frac{2c_{0}}{\pi\omega}\right)^{1/2}\frac{e^{-i\left(d-1\right)\pi/4}}{\left|\vx\right|^{\frac{d-1}{2}}}e^{i\frac{\omega}{c_{0}}\left(\left|\vx\right|-\frac{\vx\cdot\vy}{\left|\vx\right|}\right)}\left[1+\frac{1}{\left|\vx\right|}\left(i\frac{c_{0}}{\omega}\frac{\left(d-3\right)\left(d-1\right)}{8}\right.\right.\\&\left.\left.+\frac{d-1}{2}\frac{\vx\cdot\vy}{\left|\vx\right|}+i\frac{\omega}{2c_{0}}\frac{\left|\vx\right|^{2}\left|\vy\right|^{2}-\left(\vx\cdot\vy\right)^{2}}{\left|\vx\right|^{2}}\right)\right]+\mathcal{O}\left(\frac{1}{\left|\vx\right|^{\frac{d+3}{2}}}\right),\nonumber
\end{align}
\begin{align}\label{eq:K_tilde_der_estim2}
\partial_{\left|\vx\right|}\widetilde{K}\left(\vx-\vy\right)=&\left(\frac{2c_{0}}{\pi\omega}\right)^{1/2}\frac{e^{-i\left(d-1\right)\pi/4}}{\left|\vx\right|^{\frac{d-1}{2}}}e^{i\frac{\omega}{c_{0}}\left(\left|\vx\right|-\frac{\vx\cdot\vy}{\left|\vx\right|}\right)}\left[i\frac{\omega}{c_{0}}-\frac{1}{\left|\vx\right|}\left(\frac{\omega^{2}}{2c_{0}^{2}}\left(\left|\vy\right|^{2}-\frac{\left(\vx\cdot\vy\right)^2}{\left|\vx\right|^2}\right)\right.\right.\\&\left.\left.+i\frac{\omega}{2c_{0}}\left(1-d\right)\frac{\vx\cdot\vy}{\left|\vx\right|}+\frac{d^{2}-1}{8}\right)\right]+\mathcal{O}\left(\frac{1}{\left|\vx\right|^{\frac{d+3}{2}}}\right).\nonumber
\end{align}
Since $K\left(\vx\right)=\frac{i}{4}\left(\frac{\omega}{2\pi
    c_0}\right)^\frac{d-2}{2}\widetilde{K}\left(\vx\right)$, 
estimates~\eqref{eq:K_tilde_estim2} and~\eqref{eq:K_tilde_der_estim2}
imply~\eqref{eq:K_estim} and~\eqref{eq:K_der_estim}.
\end{proof}

\begin{lem}\label{lem:app_alg_int_estim}
Let $a$, $b>0$, and define
\begin{equation}\label{eq:app_J_def}
J:=\int_0^1 \frac{dx}{\left(x^2+a^2\right)^{b/2}}\geq 0.
\end{equation}
Then, we have
\begin{equation}\label{eq:app_J_estim}
J\leq \begin{cases}
C_{1,b},& b<1,\\
\log\left(\dfrac{1}{a}+\sqrt{1+\dfrac{1}{a^2}}\right),& b=1,\\
C_{2,b}\dfrac{1}{a^{b-1}},& b>1,
\end{cases} 
\end{equation}
where $C_{1,b}:=\dfrac{1}{1-b}$, $C_{2,b}:=\displaystyle{\int_0^\infty}\dfrac{dx}{\left(1+x^2\right)^{b/2}}$.
\end{lem}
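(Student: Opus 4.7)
The plan is to treat the three cases $b<1$, $b=1$, $b>1$ separately by elementary direct computation, so no real obstacle is expected. The only care is to match exactly the right-hand sides of~\eqref{eq:app_J_estim}.

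\textbf{Case $b<1$.} Since $a>0$, we have $x^2+a^2\ge x^2$ on $[0,1]$ and hence $(x^2+a^2)^{-b/2}\le x^{-b}$. Integrating, we get
\begin{equation*}
J\le\int_0^1 x^{-b}\,dx=\frac{1}{1-b}=C_{1,b},
\end{equation*}
where the improper integral converges precisely because $b<1$.

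\textbf{Case $b=1$.} A primitive of $(x^2+a^2)^{-1/2}$ is $\sinh^{-1}(x/a)$, so
\begin{equation*}
J=\sinh^{-1}(1/a)-\sinh^{-1}(0)=\log\!\left(\frac{1}{a}+\sqrt{1+\frac{1}{a^2}}\right),
\end{equation*}
using the standard identity $\sinh^{-1}(y)=\log(y+\sqrt{1+y^2})$. This yields equality, hence the claimed bound.

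\textbf{Case $b>1$.} Here I would rescale by the change of variable $x=ay$, $dx=a\,dy$, which gives
\begin{equation*}
J=\int_0^{1/a}\frac{a\,dy}{a^b(1+y^2)^{b/2}}=\frac{1}{a^{b-1}}\int_0^{1/a}\frac{dy}{(1+y^2)^{b/2}}\le \frac{1}{a^{b-1}}\int_0^\infty\frac{dy}{(1+y^2)^{b/2}}=\frac{C_{2,b}}{a^{b-1}}.
\end{equation*}
Note that the improper integral defining $C_{2,b}$ converges exactly when $b>1$, which is the regime in which this estimate is used, so the bound is finite. This completes the three cases.
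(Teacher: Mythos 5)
Your proof is correct and follows essentially the same route as the paper's: the paper rescales $x\mapsto x/a$ first and then applies the same three case-by-case bounds (comparison with $x^{-b}$ for $b<1$, exact evaluation via the $\sinh^{-1}$ primitive for $b=1$, and extension of the integral to $\infty$ for $b>1$). The only cosmetic difference is that for $b<1$ you bound the integrand directly without the change of variable, which is equally valid.
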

\begin{proof}
After the change of variable $x\mapsto z:=x/a$, we have
\[
J=\frac{1}{a^{b-1}}\int_0^{1/a}\frac{dz}{\left(z^2+1\right)^{b/2}}.
\]
Using \[
\int_0^{1/a}\frac{dz}{\left(z^2+1\right)^{b/2}}\leq\int_0^{1/a}\frac{dz}{z^b}=\frac{a^{b-1}}{1-b}
\]
when $b<1$, 
\[
\int_0^{1/a}\frac{dz}{\left(z^2+1\right)^{b/2}}\leq\int_0^{\infty}\frac{dz}{\left(z^2+1\right)^{b/2}}=:C_{2,b}
\]
when $b>1$,
{and
  \[
    J=\int_0^{1/a}\frac{dz}{\left(z^2+1\right)^{1/2}}
    =\log\left(\dfrac{1}{a}+\sqrt{1+\dfrac{1}{a^2}}\right)
    \]
when $b=1$,}
the estimate \eqref{eq:app_J_estim} follows immediately.
\end{proof}

\begin{lem}\label{lem:app_osc_int_estim}
Let $a>0$. For $t\gg 1$, we have the following estimate
\begin{equation}\label{eq:app_osc_int_estim}
\int_0^a \frac{e^{-ixt}}{x^{1/2}}dx=\left(\frac{\pi}{2t}\right)^{1/2}\left(1-i\right)+\mathcal{O}\left(\frac{1}{t}\right). 
\end{equation}
\end{lem}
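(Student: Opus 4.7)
The plan is to reduce the integral to a standard Fresnel-type integral by rescaling, then control the resulting tail error by integration by parts.

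First I would make the substitution $y := xt$, which transforms the integral into
\[
\int_0^a \frac{e^{-ixt}}{x^{1/2}}\,dx \;=\; \frac{1}{t^{1/2}}\int_0^{at} \frac{e^{-iy}}{y^{1/2}}\,dy.
\]
The next step is to split the $y$-integration as $\int_0^{at} = \int_0^\infty - \int_{at}^\infty$. The full integral $\int_0^\infty y^{-1/2} e^{-iy}\,dy$ is a classical Fresnel integral whose value is $\sqrt{\pi/2}\,(1-i)$; this can either be cited from standard references or justified by rotating the contour from the positive real axis to the negative imaginary axis (applying Cauchy's theorem on a quarter-circle and invoking Jordan's lemma), which reduces it to $\int_0^\infty u^{-1/2} e^{-u}\,du = \Gamma(1/2) = \sqrt{\pi}$ up to a factor of $(1-i)/\sqrt{2}$ coming from the parametrization $y = -iu$.

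For the tail, I would integrate by parts with $u = y^{-1/2}$ and $dv = e^{-iy}\,dy$ to obtain
\[
\int_{at}^\infty \frac{e^{-iy}}{y^{1/2}}\,dy \;=\; -\frac{i\,e^{-iat}}{(at)^{1/2}} + \frac{i}{2}\int_{at}^\infty \frac{e^{-iy}}{y^{3/2}}\,dy,
\]
where the boundary term at $+\infty$ is handled by the fact that $y^{-1/2}\to 0$ and the oscillatory integral there converges conditionally. The first term on the right is $O(t^{-1/2})$, and the remaining integral is bounded in modulus by $\int_{at}^\infty y^{-3/2}\,dy = 2(at)^{-1/2} = O(t^{-1/2})$. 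Thus
\[
\int_{at}^\infty \frac{e^{-iy}}{y^{1/2}}\,dy = O(t^{-1/2}) \qquad\text{as } t\to\infty.
\]

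Putting the pieces together gives
\[
\int_0^a \frac{e^{-ixt}}{x^{1/2}}\,dx = \frac{1}{t^{1/2}}\left[\sqrt{\frac{\pi}{2}}(1-i) + O(t^{-1/2})\right] = \left(\frac{\pi}{2t}\right)^{1/2}(1-i) + O\!\left(\frac{1}{t}\right),
\]
which is the desired estimate. The only nontrivial ingredient is the evaluation of the Fresnel integral $\sqrt{\pi/2}\,(1-i)$; once that is in hand (or cited), the rest is a routine integration-by-parts tail estimate, so I would expect no substantial obstacle here.
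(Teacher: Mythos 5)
Your proof is correct. Both you and the paper reduce the statement to the classical Fresnel integral after rescaling, but the mechanics differ: the paper substitutes $z=\sqrt{xt}$ to get $\frac{2}{\sqrt t}\int_0^{\sqrt{at}}e^{-iz^2}\,dz$ and then deforms the \emph{finite} contour into the sector bounded by the ray $re^{-i\pi/4}$ and a circular arc of radius $\sqrt{at}$, extracting the main term from the resulting Gaussian integral and bounding the arc contribution by an integration by parts in the angular variable; you instead substitute $y=xt$, split off the full improper integral $\int_0^\infty y^{-1/2}e^{-iy}\,dy=\sqrt{\pi/2}\,(1-i)$, and control the tail $\int_{at}^\infty$ by a single integration by parts on the real line, giving $O(t^{-1/2})$ for the tail and hence $O(1/t)$ overall. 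The two routes are essentially equivalent in content (the constants and the error order agree, and your tail bound is the real-line counterpart of the paper's arc estimate), but yours keeps the remainder analysis entirely on the real axis and confines any contour argument to the one-time evaluation of the Fresnel constant, which can alternatively just be cited; the paper's version, by contrast, gets the exponentially small corrections $O(e^{-at})$ from the Gaussian tail explicitly, though this extra precision is not needed for the stated $O(1/t)$ bound. The one point to state carefully in your write-up is that the improper integrals $\int_0^\infty$ and $\int_{at}^\infty$ of $y^{-1/2}e^{-iy}$ converge only conditionally, which is exactly what your integration by parts establishes, so the argument is self-consistent.
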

\begin{proof}
Making a change of variable $x\mapsto z\left(x\right):=\sqrt{xt}$, we have
\begin{equation}\label{eq:I_def}
I\left(t\right):=\int_0^a \frac{e^{-ixt}}{x^{1/2}}dx=\frac{2}{\sqrt{t}}\int_{0}^{\sqrt{at}}e^{-iz^{2}}dz.
\end{equation}
Since the integrand in \eqref{eq:I_def} is analytic in $z$, we can invoke the
Cauchy theorem to deform the integration contour in the complex
plane. In particular, we choose the {new} contour {$\Gamma_1\cup\Gamma_2$} that consists of
two
parts: the straight line segment $\Gamma_1$
and the circular arc $\Gamma_2$. {This contour is traversed counterclockwise with $\Gamma_1$ and $\Gamma_2$} defined, respectively, as
\[
  \begin{split}
&\Gamma_1:=\left\{
  z\in\mathbb{C}:\,z=re^{-i\pi/4},\,r\in\left(0,R\right)\right\},\\
&\Gamma_2:=\left\{
  z\in\mathbb{C}:\,z=Re^{i\phi},\,\phi\in\left(-\frac{\pi}{4},0\right)\right\},
\end{split}
\]
where for the sake of brevity, we have set $R:=\sqrt{at}$.
In other words, we can write
\begin{align}\label{eq:I_estim}
  I\left(t\right)=&
                    \frac{2}{\sqrt{t}}\left(\int_{\Gamma_1}e^{-iz^{2}}dz+\int_{\Gamma_2}e^{-iz^{2}}dz\right)\\
=&\frac{2\left(1-i\right)}{\sqrt{2t}}\int_{0}^{R}e^{-r^{2}}dr+\frac{2iR}{\sqrt{t}}\int_{-\pi/4}^0 \exp\left(-iR^2 e^{2i\phi}+i\phi\right)d\phi.\nonumber
\end{align}
Note that, for $R=\sqrt{at}\gg 1$, we have
\begin{align}\label{eq:I_term1}
\int_{0}^{R}e^{-r^{2}}dr & =\int_{0}^{\infty}e^{-r^{2}}dr-\int_{R}^{\infty}e^{-r^{2}}dr=\frac{\sqrt{\pi}}{2}-e^{-R^{2}}\int_{0}^{\infty}e^{-r^{2}-2Rr}dr\\
 & =\frac{\sqrt{\pi}}{2}+\mathcal{O}\left(e^{-at}\right),\nonumber
\end{align}
\begin{align}\label{eq:I_term2}
\int_{-\frac{\pi}{4}}^{0}\exp\left(-iR^{2}e^{2i\phi}+i\phi\right)d\phi =& \frac1{2R^2}\int_{-\frac{\pi}{4}}^{0}\Big[\frac{\partial}{\partial\phi}\exp\left(-iR^{2}e^{2i\phi}\right)\Big]e^{-i\phi}d\phi\\
 =&\frac{1}{2R^{2}}\left(e^{-iR^{2}}-\frac{1+i}{\sqrt{2}}e^{-R^{2}}\right)\nonumber\\
& +\frac{i}{2R^{2}}\int_{-\frac{\pi}{4}}^{0}\exp\left(-iR^{2}e^{2i\phi}-i\phi\right)d\phi\nonumber\\
 =&\mathcal{O}\left(\frac{1}{R^{2}}\right)=\mathcal{O}\left(\frac{1}{t}\right). \nonumber
\end{align}
Inserting \eqref{eq:I_term1} and~\eqref{eq:I_term2} into \eqref{eq:I_estim} furnishes the claimed estimate \eqref{eq:app_osc_int_estim}.
\end{proof}


\begin{lem}\label{1D-interpol}
Let $F\in C^2(\mR_+)$ satisfy for some $C_1>0$
\begin{equation}\label{eq:F_Ftt_bnd}
  |F(t)|+|F''(t)|\leq \frac{C_1}{\left(1+t^2\right)^{1/2}},   \hspace{1em}t\geq 0.
\end{equation}
Then
\begin{equation}\label{eq:Ft_bnd_intrpl}
  |F'(t)|\leq \frac{C_2}{\left(1+t^2\right)^{1/2}},   \hspace{1em}t\geq 0
\end{equation}
holds true with some $C_2>0$.
\end{lem}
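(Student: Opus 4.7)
The plan is to use a standard Taylor-expansion interpolation trick (a weighted version of the Landau--Kolmogorov inequality), turning a bound on $F$ and $F''$ into a bound on $F'$ by exploiting the second-order Taylor remainder.

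First I would fix $t\ge 0$ and $h>0$, and write Taylor's formula with integral remainder
\begin{equation*}
  F(t+h)=F(t)+hF'(t)+\int_{t}^{t+h}(t+h-s)F''(s)\,ds,
\end{equation*}
which immediately gives
\begin{equation*}
  F'(t)=\frac{F(t+h)-F(t)}{h}-\frac{1}{h}\int_{t}^{t+h}(t+h-s)F''(s)\,ds.
\end{equation*}

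Next I would take absolute values and plug in the hypothesis \eqref{eq:F_Ftt_bnd}. Since for $s\ge t\ge 0$ one has $(1+s^2)^{1/2}\ge(1+t^2)^{1/2}$, both $|F(t+h)|$ and $|F(s)|$ in the integrand can be bounded by $C_1/(1+t^2)^{1/2}$. This yields
\begin{equation*}
  |F'(t)|\le \frac{2C_1}{h\,(1+t^2)^{1/2}}+\frac{C_1}{h\,(1+t^2)^{1/2}}\int_{t}^{t+h}(t+h-s)\,ds
  =\frac{1}{(1+t^2)^{1/2}}\left(\frac{2C_1}{h}+\frac{C_1 h}{2}\right).
\end{equation*}

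Finally, I would optimize (or simply balance) the two terms in $h$: choosing the fixed value $h=2$ gives $|F'(t)|\le 2C_1/(1+t^2)^{1/2}$, so \eqref{eq:Ft_bnd_intrpl} holds with $C_2:=2C_1$. There is no genuine obstacle here: the only thing to verify is that $(1+s^2)^{-1/2}$ is monotone nonincreasing for $s\ge 0$, which makes the weight $(1+t^2)^{-1/2}$ factor out cleanly from both the pointwise boundary terms and the integral remainder. The argument is uniform in $t\ge 0$, so no separate treatment of small $t$ is needed.
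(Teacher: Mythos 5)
Your argument is correct: the Taylor formula with integral remainder, the monotonicity of $(1+s^2)^{-1/2}$ on $[t,t+h]$, and the choice $h=2$ do yield $|F'(t)|\le 2C_1/(1+t^2)^{1/2}$ uniformly for $t\ge0$, with the elementary computation $\int_t^{t+h}(t+h-s)\,ds=h^2/2$ balancing the two contributions. This is, however, a genuinely different route from the paper's. The paper discretizes $\mathbb{R}_+$ into unit intervals $[N,N+1]$, applies a cited Landau--Kolmogorov interpolation inequality on each interval to get
\begin{equation*}
\|F'\|_{L^\infty(N,N+1)}\le \frac{\tilde C_1}{(1+N^2)^{1/2}},
\end{equation*}
and then converts the discrete weight $(1+\lfloor t\rfloor^2)^{-1/2}$ back into $(1+t^2)^{-1/2}$ via a floor-function comparison. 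Your proof is more self-contained (no external interpolation lemma is needed) and produces the explicit constant $C_2=2C_1$; it does rely on the weight being nonincreasing so that it factors out of both the boundary terms and the remainder, whereas the paper's interval-by-interval argument only needs the weight to be comparable to a constant on each unit interval, which would also cover non-monotone but slowly varying weights. For the specific weight at hand, both approaches are valid, and yours is arguably the cleaner one.
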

\begin{proof}
For all $N\in\mN_0$, we have
$$
  \|F\|_{L^\infty(N,N+1)} + \|F''\|_{L^\infty(N,N+1)}\le \frac{C_1}{(1+N^2)^{1/2}}.
$$
By interpolation (see \cite[Prop. 2.2]{BM}), we deduce the same estimate also for $F'$, with some generic constant $\tilde C_1>0$ independent of $N$. Hence
$$
  |F'(t)| \le \frac{\tilde C_1}{(1+\lfloor t\rfloor^2)^{1/2}},\hspace{1em}t\geq 0,
$$
where $\lfloor t\rfloor$ is the floor function. Using
$$
1+\left\lfloor t\right\rfloor ^{2}=\frac{1+\left\lfloor t\right\rfloor ^{2}}{1+t^{2}}\left(1+t^{2}\right)\geq\frac{1+\left(t-1\right)^{2}\chi_{\left(1,\infty\right)}\left(t\right)}{1+t^{2}}\left(1+t^{2}\right)\geq C_{0}\left(1+t^{2}\right),
$$
with some constant $C_0>0$, we deduce \eqref{eq:Ft_bnd_intrpl} with $C_2=\tilde C_1/C_0^{1/2}$. 
\end{proof}


\section{}  \label{sec:app2}
We test our results numerically on an example where the material
    parameters $\alpha$, $\beta$ and the source term $F$ are radially
    symmetric. Namely, {for $r:=\vert\vx\vert$, $\vx\in\mathbb R^d$,} we choose 
\begin{equation}
\alpha\left(r\right):=2\chi_{\left[0,2\right)}\left(r\right)+\frac{1}{2}\chi_{\left[2,4\right)}\left(r\right)\left(3+\cos\left(\frac{\pi}{2}\left(r-2\right)\right)\right)+\chi_{\left[4,\infty\right)}\left(r\right),\label{eq:alph_num}
\end{equation}
\begin{equation}
\beta\left(r\right):=1+\chi_{\left(3,7\right)}\left(r\right)\left(1+\cos\left(\frac{\pi}{2}\left(r-5\right)\right)\right),\label{eq:bet_num}
\end{equation}
\begin{equation}
F\left(r\right):=10\chi_{\left(0,8/3\right)}\left(r\right)\left(1+\cos\left(\pi\left(\frac{3}{4}r-1\right)\right)\right),\label{eq:F_num}
\end{equation}
where $\chi$ denotes the characteristic function. Observe that, for this choice of $\alpha$ and $\beta$, the background medium parameters are $\alpha_0=\beta_0=1$. We illustrate the functions in~\eqref{eq:alph_num}, \eqref{eq:bet_num}, and~\eqref{eq:F_num} in Figure~\ref{fig:params}.

We fix the dimension $d\in\left\{1,2,3\right\}$ and the frequency
    $\omega=\pi/4$, and let
$U(\vx)$ and $u(\vx,t)$ be the solutions to
    problems~\eqref{eq:U_Helm} and~\eqref{eq:wave_LAP}, respectively.

As in Table~\ref{table:summary}, we define 
\[
              u^{\text{\sc diff}}(\vx,t):=\begin{cases}
              u(\vx,t)-e^{-i\omega t}U(\vx)\qquad&\text{if
                                                          $d=2,3$},\\
              u(\vx,t)-e^{-i\omega t}U(\vx)-U_\infty \qquad&\text{if
                $d=1$},
              \end{cases}
\]       
where $U_\infty$ is the constant in~\eqref{eq:U_infty_expl}.

We consider the bounded domain $B_{R_0}:=\{\vx\in
              \mathbb R^d:\, \vert\vx\vert<R_0\}$ with $R_{0}=5$, and set             
\begin{equation}\label{eq:num_err_def}                                                           
\mathcal{E}\left(t\right):=\left(\Vert u^{\text{\sc diff}}(\cdot,t)\Vert_{L^2\left(B_{R_0}\right)}^{2}+\Vert \partial_t u^{\text{\sc diff}}(\cdot,t)\Vert_{L^2\left(B_{R_0}\right)}^{2}\right)^{1/2}.                            
\end{equation}


Exploiting the radial symmetry, we rewrite
               problems~\eqref{eq:U_Helm} and~\eqref{eq:wave_LAP} in
               the $(r,t)$-variables and we solve them numerically on
               the domain $\left(0,R\right)\times\left(0,T\right)$
               with $R=120$ and $T=240$.
               For the time-dependent wave problem, we use finite differences in space on a uniform grid of size $6\cdot10^{-2}$,
and the Leapfrog method in time on a uniform grid of size
               $1.33\cdot 10^{-2}$. We solve the Helmholtz problem on the same
               spatial grid.

The following second-order radiation conditions have been used at
$r=R$: 
\begin{equation}
\partial_{t}u\left(R,t\right)=-\sqrt{\frac{\alpha_{0}}{\beta_{0}}}\left[\partial_{r}u\left(R,t\right)+\frac{1}{R}\frac{1-\delta_{d,1}}{1+\delta_{d,2}}u\left(R,t\right)\right],\hspace{1em}t\geq0,\label{eq:num_u_rad_BC}
\end{equation}
\begin{equation}
U^{\prime}\left(R\right)=\left[i\omega\sqrt{\frac{\beta_{0}}{\alpha_{0}}}-\frac{1}{R}\frac{1-\delta_{d,1}}{1+\delta_{d,2}}\right]U\left(R\right),\label{eq:num_U_rad_BC}
\end{equation}
where $\delta$ denotes the Kronecker delta symbol (e.g., see \cite[eq. (1.27)]{engquist1977} and  \cite[eq. (7.10)]{grote1995} for $d=2$ and $d=3$, respectively). Note that these
radiation conditions are exact in case $d=1$.

Central finite differences stencils were used to approximate first-order derivatives
and ghost points were added at the boundaries $r=0$ and $r=R$. In
doing so, the following relations were instrumental 
\[
\partial_{t}^{2}u\left(0,t\right)-\frac{d\alpha\left(0\right)}{\beta\left(0\right)}\partial_{r}^{2}u\left(0,t\right)=e^{-i\omega t} F\left(0\right),
\]
\[
-\omega^{2}U\left(0\right)-\frac{d\alpha\left(0\right)}{\beta\left(0\right)}U^{\prime\prime}\left(0\right)=F\left(0\right).
\]
These equations are obtained by passing to the limit $r\to 0$
in the equations in~\eqref{eq:U_Helm} and \eqref{eq:wave_LAP}, and
using the boundary conditions at $r=0$: $\partial_r u\left(0,t\right)=0$, $U^{\prime}\left(0\right)=0$.

The quantity $\mathcal{E}\left(t\right)$ defined by~\eqref{eq:num_err_def} was computed from the numerical solution of ~\eqref{eq:U_Helm} and~\eqref{eq:wave_LAP}, and is shown in Figure~\ref{fig:err_d1d2d3}--\ref{fig:err_d2_loglog}. In particular, in Figure~\ref{fig:err_d1d2d3}, we observe a much faster decay in time for $d=1$ and $d=3$ than for $d=2$. The semilogarithmic plot in Figure~\ref{fig:err_d1d3_semilog} shows this decay to be exponential (up to the saturation due to numerical errors for small quantities at large times). A linear region for large times is observed in logarithmic plot in Figure~\ref{fig:err_d2_loglog} for $d=2$. As a comparison, we have plotted in black a line of slope $-1$. This clearly illustrates an algebraic convergence, corroborating the sharpness of the decay estimate in Theorem \ref{thm:LAP} for this case.







\begin{figure}

\centering
\begin{subfigure}[b]{0.49\textwidth}
\includegraphics[width=1.1\textwidth]{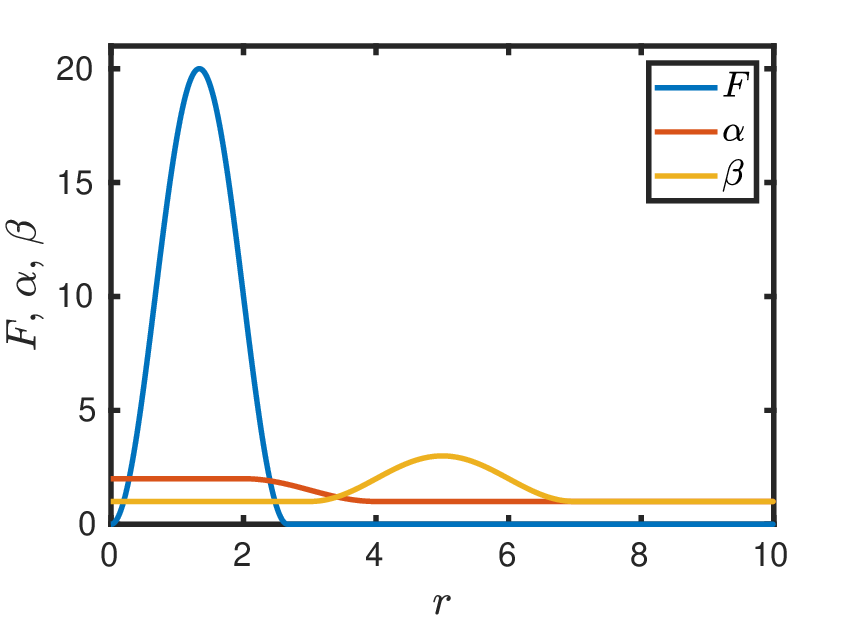}
\caption{Source $F$ and parameters $\alpha$ and $\beta$}
\label{fig:params}
\end{subfigure}
\hfill
\begin{subfigure}[b]{0.49\textwidth}
\includegraphics[width=1.1\textwidth]{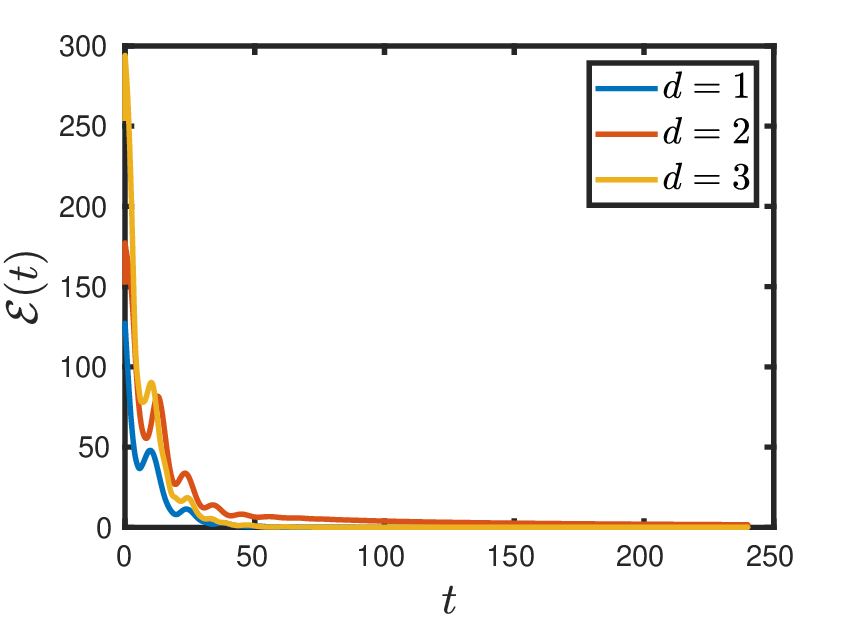}
\caption{$\mathcal{E}\left(t\right)$ for $d\in\{1,2,3\}$}
\label{fig:err_d1d2d3}
\end{subfigure}
\hfill
\begin{subfigure}[b]{0.49\textwidth}
\includegraphics[width=1.1\textwidth]{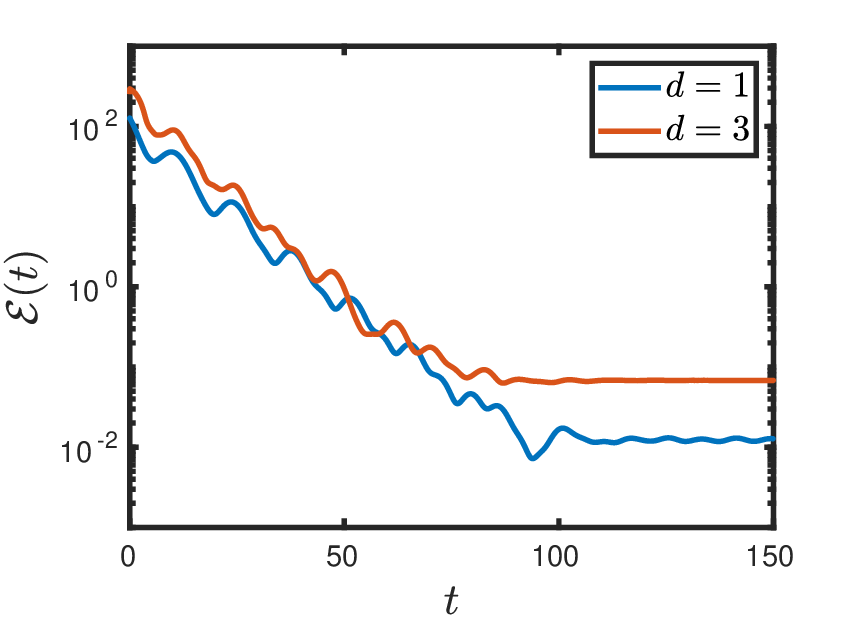}
\caption{$\mathcal{E}\left(t\right)$ for $d\in\{1,3\}$ in semilog scale}
\label{fig:err_d1d3_semilog}
\end{subfigure}
\hfill
\begin{subfigure}[b]{0.49\textwidth}
\includegraphics[width=1.02\textwidth]{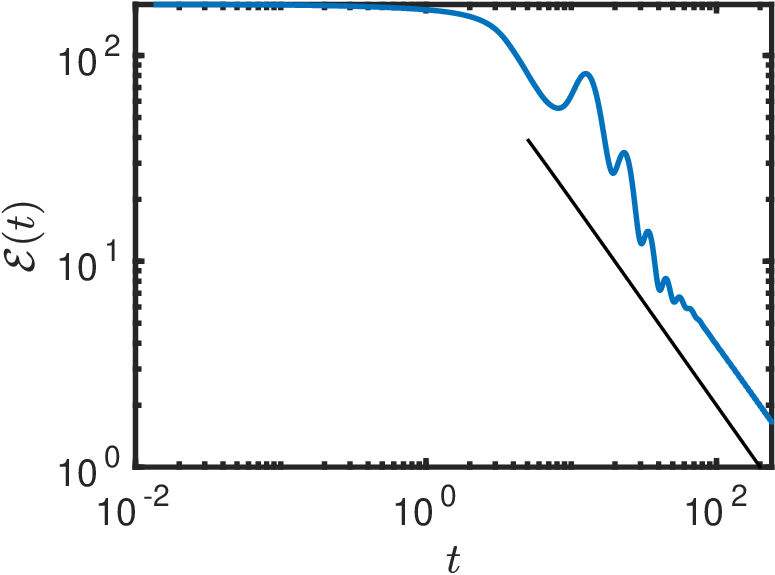}
\caption{$\mathcal{E}\left(t\right)$ for $d=2$ in log scale}
\label{fig:err_d2_loglog}
\end{subfigure}

\caption{Large-time convergence for the radially symmetric example
  with the data as in~\eqref{eq:alph_num}--\eqref{eq:F_num}.}
\label{fig:radial}

\end{figure}               

%
%
%

\bibliographystyle{abbrv}
\bibliography{limampl_vs_decay}

\end{document}